\renewcommand{\email}[2][]{%
  \ifx\emails\@empty\relax\else{\g@addto@macro\emails{,\space}}\fi%
  \@ifnotempty{#1}{\g@addto@macro\emails{\textrm{(#1)}\space}}%
  \g@addto@macro\emails{#2}%
}
\definecolor{shadecolor}{rgb}{0.85,0.85,0.85}
\definecolor{darkred}{rgb}{0.5,0.15,0.15}
\newtheorem{thm}{Theorem}
\newtheorem{cor}[thm]{Corollary}
\newtheorem{conj}[thm]{Conjecture}
\newtheorem{lem}[thm]{Lemma}
\newtheorem{prop}[thm]{Proposition}
\theoremstyle{remark}
\newtheorem{rem}[thm]{Remark}
\newtheorem{notate}[thm]{Notation}
\theoremstyle{definition}
\newtheorem{defn}[thm]{Definition}
\numberwithin{thm}{section}
\numberwithin{equation}{section}
\numberwithin{figure}{section}
\newcommand{\cB}{\ensuremath{\mathcal B}}
\newcommand{\cL}{\ensuremath{\mathcal L}}
\newcommand{\cM}{\ensuremath{\mathcal M}}
\newcommand{\R}{\ensuremath{\mathbb R}}
\newcommand{\C}{\ensuremath{\mathbb C}}
\newcommand{\CP}{\ensuremath{\mathbb {CP}}}
\newcommand{\Z}{\ensuremath{\mathbb Z}}
\newcommand{\bD}{{\mathbb{D}}}
\newcommand{\cE}{{\mathcal E}}
\newcommand{\delbar}{\ensuremath{\overline{\partial}}}
\newcommand{\zbar}{\ensuremath{\overline{z}}}
\newcommand{\semif}{\ensuremath{\mathrm{sf}}}
\newcommand{\app}{\ensuremath{\mathrm{app}}}
\newcommand{\I}{{\mathrm i}}
\newcommand{\e}{{\mathrm e}}
\newcommand{\de}{\mathrm{d}}
\newcommand{\norm}[1]{\lVert#1\rVert}
\newcommand{\IP}[1]{\left\langle#1\right\rangle}
\newcommand{\eps}{\epsilon}
\newcommand{\del}{{\partial}}
\DeclareMathOperator{\Det}{Det}
\DeclareMathOperator{\End}{End}
\newcommand{\fixme}[1]{{\color{BrickRed}{[#1]}}}
\begin{document}
\onehalfspacing

\title{Exponential decay for the asymptotic geometry of the Hitchin metric}
\author{Laura Fredrickson}
\address{Department of Mathematics, Stanford University}
\email{lfredrickson@stanford.edu}

\begin{abstract}
We consider Hitchin's hyperk\"ahler metric $g_{L^2}$ on
the $SU(n)$-Hitchin moduli space over a compact Riemann surface.
We prove that  the difference between the metric $g_{L^2}$ and a simpler ``semiflat'' hyperk\"ahler metric $g_{\semif}$
is exponentially-decaying along generic rays in the Hitchin moduli space,
as conjectured by Gaiotto-Moore-Neitzke.
\end{abstract}

\maketitle

\setcounter{page}{1}

In this paper, we study the asymptotic geometry
of the hyperk\"ahler metric $g_{L^2}$ on the $SU(n)$-Hitchin moduli space, $\cM$.  Gaiotto-Moore-Neitzke conjecture\cite{GMNhitchin, GMNwallcrossing} an expansion of $g_{L^2}$ in terms of another hyperk\"ahler metric, known as the semiflat metric $g_{\semif}$, that exists because $\cM$ is an algebraic completely integrable system.  

Choose a Higgs bundle $(\delbar_E, \varphi)$ in a generic\footnote{For $SU(2)$, we take $(\delbar_E, \varphi)$ in any non-degenerate fiber. For $SU(n)$, the genericity condition is defined in Definition \ref{defn:simpleeigenvaluecrossing}.} fiber of the $SU(n)$-Hitchin moduli space $\cM$,
and consider the ray $(\delbar_E, t \varphi, h_t)$ in $\cM$ of Higgs bundles $(\delbar_E, t\varphi)$ with harmonic metric $h_t$ on the complex vector bundle $E$, as shown in Figure \ref{fig:ends}.
At $t=1$, fix a Higgs bundle deformation $(\dot{\eta}, \dot{\varphi})$ of $(\delbar_E, \varphi)$; note that $\dot\psi_t =(\dot{\eta}, t \dot{\varphi})$ is a Higgs bundle
deformation of $(\delbar_E, t \varphi)$.
We prove that the difference of the two metrics is exponentially-decaying in $t$.
\begin{figure}[ht]
 \begin{center}
 \includegraphics[height=1in]{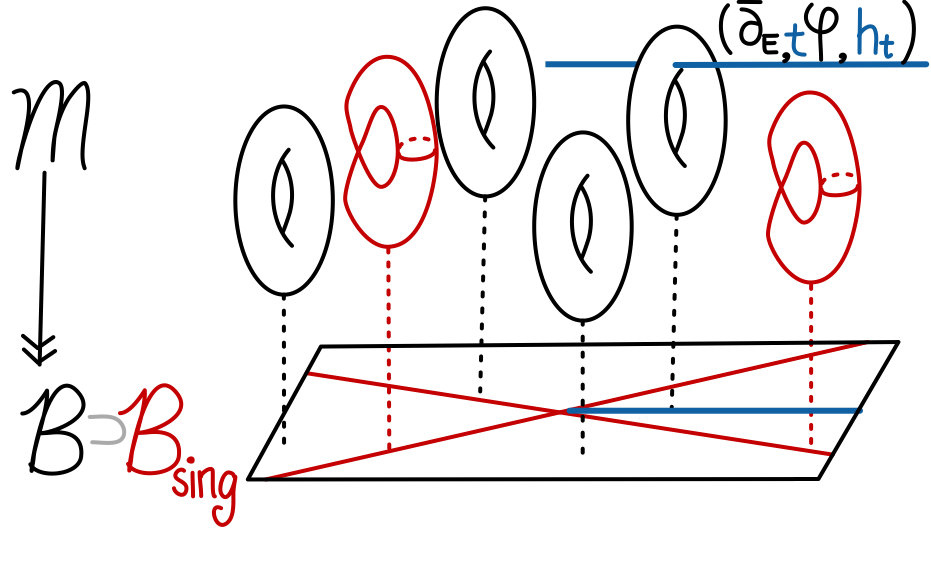}
 \vspace{-.2in}
  \caption{\label{fig:ends}}
 \end{center}
\end{figure}
 \vspace{-.2in}
\begin{thm}\label{thm:main}
Fix a generic Higgs bundle $(\delbar_E, \varphi)$ in $\cM$, and a Higgs bundle variation
$\dot{\psi}=(\dot{\eta}, \dot{\varphi})$. 
Consider the deformation $\dot{\psi}_t=(\dot{\eta}, t \dot{\varphi}) \in T_{(\delbar_E, t \varphi)} \cM$ over the ray $(\delbar_E, t \varphi, h_t)$.
As $t \rightarrow \infty$, the difference between 
 Hitchin's hyperk\"ahler $L^2$-metric $g_{L^2}$ on $\cM$ and the semiflat (hyperk\"ahler) metric $g_{\semif}$ is  exponentially-decaying.  In particular, there is some constant $\gamma>0$, such that 
 \begin{equation} \label{eq:summary}
  g_{L^2}(\dot{\psi}_t, \dot{\psi}_t) = g_{\semif}(\dot{\psi}_t, \dot{\psi}_t) +  O(\e^{-\gamma t}).
 \end{equation}
\end{thm}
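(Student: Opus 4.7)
The plan is to express $g_{L^2}(\dot\psi_t, \dot\psi_t)$ as an explicit integral over the Riemann surface involving the harmonic metric $h_t$ and a Coulomb-gauge representative of $\dot\psi_t$, and to compare this integral term by term against the corresponding expression for $g_{\semif}(\dot\psi_t, \dot\psi_t)$ on an abelianized configuration over the spectral cover. The mechanism driving exponential decay is the near-abelianization of $h_t$ away from the branch locus $\cB$ of $\varphi$.

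First I would fix an approximate harmonic metric $h_t^{\app}$ built by gluing a limiting (abelian) configuration on the complement of $\cB$ to local fiducial model solutions in small disks around each branch point, and invoke the quantitative estimate that $h_t - h_t^{\app} = O(\e^{-\gamma_0 t})$ in appropriate weighted norms for some $\gamma_0>0$. Next I would solve the gauge-fixing equation for a section $\gamma_t \in \Omega^0(\End E)$ so that
\begin{equation*}
g_{L^2}(\dot\psi_t,\dot\psi_t) = 2\int_C \bigl( |\dot\eta - \delbar_E \gamma_t|^2_{h_t} + t^2 |\dot\varphi - [\varphi, \gamma_t]|^2_{h_t} \bigr)\, \mathrm{dvol}_C,
\end{equation*}
and show via an implicit function theorem argument that $\gamma_t$ differs by $O(\e^{-\gamma_1 t})$ from the analogous gauge representative $\gamma_t^{\app}$ built using $h_t^{\app}$; this reduces the task to computing the right-hand side with $h_t$ replaced by $h_t^{\app}$ and $\gamma_t$ replaced by $\gamma_t^{\app}$, modulo exponentially small errors.

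The heart of the argument is the analysis with $h_t^{\app}$. Away from $\cB$, the spectral cover trivializes $E$ into an orthogonal sum of eigenline bundles of $\varphi$, and $h_t^{\app}$ is diagonal in this splitting. The off-diagonal part of the Coulomb gauge problem then satisfies an elliptic equation whose leading operator has the form $\Delta_{\delbar} + t^2 |\alpha_i - \alpha_j|^2$, where $\alpha_i - \alpha_j$ is a nowhere-vanishing section on the complement of a neighborhood of $\cB$; the genericity condition of Definition \ref{defn:simpleeigenvaluecrossing} provides the uniform lower bound on $|\alpha_i - \alpha_j|$ needed to apply an Agmon-type exponential decay estimate, giving an off-diagonal contribution to $g_{L^2}$ of order $\e^{-\gamma_2 t}$. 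The diagonal part of the pairing reduces by direct computation on the spectral cover to $g_{\semif}(\dot\psi_t, \dot\psi_t)$.

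The main obstacle will be the uniform analysis across the transition from the abelian region to the small disks around each branch point, where $\varphi$ ceases to be regular and the abelianization breaks down. Inside each disk one must rely on the local fiducial model, checking that the local contribution to $g_{L^2}$ matches the local contribution to $g_{\semif}$ up to $O(\e^{-\gamma t})$ by exploiting a model spectral gap proportional to $t$, and one must glue the abelian exponential decay to the local model decay in the annular matching region without losing polynomial factors in $t$. Handling this matching carefully, and choosing $\gamma := \min(\gamma_0, \gamma_1, \gamma_2)$ small enough to be inherited from the local model, yields \eqref{eq:summary}.
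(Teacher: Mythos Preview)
Your overall architecture---replace $h_t$ by $h_t^{\app}$ at exponentially small cost, then analyze the approximate problem---matches the paper's strategy, and your first reduction step is fine. But there is a genuine gap in how you propose to handle the disks around the branch points, and it is precisely the gap that separates Mazzeo--Swoboda--Weiss--Witt's polynomial decay from the exponential decay claimed here.

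You say that inside each disk the local contribution to $g_{L^2}$ matches the local contribution to $g_{\semif}$ up to $O(\e^{-\gamma t})$ ``by exploiting a model spectral gap proportional to $t$.'' This is the step that fails. At a branch point the eigenvalues of $\varphi$ collide, so the potential $|\alpha_i-\alpha_j|^2$ in your operator $\Delta_{\delbar}+t^2|\alpha_i-\alpha_j|^2$ vanishes there; there is no uniform spectral gap on the disk, and an Agmon-type argument alone yields only polynomial-in-$t$ control of the disk contribution. Indeed, this is exactly what \cite{MSWW17} obtains: their expansion \eqref{eq:MSWWexpansion} contains an a priori infinite polynomial tail coming entirely from the disks, from variations that move the zeros of $\det\varphi$. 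Your proposal contains no mechanism to kill these polynomial terms.

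The missing idea, borrowed in the paper from Dumas--Neitzke, is to compare not the raw Coulomb-gauge deformation on the disk against the limiting one, but to interpose a deformation $\dot\nu_t^X$ built from a local holomorphic vector field $X$ that tracks the moving zero of $q_2+\eps\dot q_2$. With this choice the integrand of the difference $g_{\model}-g_{\semif}$ on the disk becomes an \emph{exact} $2$-form, so by Stokes' theorem the disk integral reduces to a boundary term, where the exponential decay of $u_t$ at fixed radius takes over. The remaining comparison between $\dot\nu_t^X$ and the actual Coulomb-gauge deformation is then handled by a maximum principle on a scalar equation. None of this is visible from a spectral-gap heuristic; you need to identify the exactness explicitly. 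Also note that once you use $h_t^{\app}$, the difference $g_{\app}-g_{\semif}$ is supported \emph{only} on the disks (since $h_t^{\app}=h_\infty$ elsewhere and $\dot\nu_t^{\app}=\dot\nu_\infty$ there by uniqueness), so your Agmon analysis on the exterior is superfluous.
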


\section{Background \& Strategy}\label{sec:background}

In this section,
we review the relevant results about Hitchin's
hyperk\"ahler metric $g_{L^2}$ (\S \ref{sec:L2intro})
and the semiflat metric $g_{\semif}$ (\S \ref{sec:sfintro}).
Additionally, we review Gaiotto-Moore-Neitzke's conjecture \cite{GMNwallcrossing, GMNhitchin}
for Hitchin's hyperk\"ahler metric $g_{L^2}$  
and the progress made towards this conjecture in the rank $2$ case (\S\ref{sec:survey}). Of particular note, Mazzeo-Swoboda-Weiss-Witt \cite{MSWW17} have shown that $g_{L^2}-g_{\semif}$ decays \emph{polynomially} in $t$
along a generic ray in $\cM_{SU(2)}$; Dumas-Neitzke \cite{DumasNeitzke} have shown that---restricted to the image of the Hitchin section of $\cM_{SU(2)}$--- $g_{L^2}-g_{\semif}$ decays \emph{exponentially} in $t$.
We comment on some of the important ingredients in their respective proofs that we gladly borrow, and describe our strategy of proof in \S\ref{sec:compare}. We also highlight some results that we prove in the process about the semiflat metric and Hitchin's hyperk\"ahler metric that we believe are of independent interest and utility.

\subsection{Higgs bundles and the harmonic metric.} \label{sec:Higgsreview}

Let $C$ be a compact Riemann surface with metric $g_C$.
Additionally, let $E \rightarrow C$ be a complex vector bundle of rank $n$
and degree $0$, with some fixed holomorphic structure on 
the determinant line bundle $\Det E$. 
From this data, we get an associated moduli space $\cM$
of polystable  $SL(n,\C)$-Higgs bundles up to complex gauge equivalence.  An $SL(n,\C)$-Higgs bundle is a  pair $(\delbar_E, \varphi)$ where
\begin{itemize}
 \item $\delbar_E$ is a holomorphic structure on $E$ which induces the fixed holomorphic structure $\delbar_{\Det E}$ on $\Det E$, and  
 \item the Higgs field $\varphi \in \Omega^{1,0}(C, \End E)$ is traceless and satisfies $\delbar_E \varphi=0$. 
\end{itemize}
Equivalently, $\cM$ is the $SU(n)$-Hitchin moduli space
consisting of triples $(\delbar_E, \varphi, h)$---up to complex gauge equivalence. Here, the hermitian metric $h$ solves Hitchin's equations
\begin{equation} \label{eq:Hitchin}
 F_{D(\delbar_E, h)} + [\varphi, \varphi^{\dagger_h}]=0,
\end{equation}
and is known as the \emph{harmonic metric}.
In this expression, $D(\delbar_E, h)$ is the Chern connection associated to the pair $(\delbar_E, h)$ , $F_{D(\delbar_E, h)}$ is its curvature, and $\varphi^{\dagger_h} \in \Omega^{0,1}(C, \End E)$ is the $h$-hermitian adjoint of $\varphi$.
Moreover, we fix a compatible\footnote{Here, the compatibility condition is $F_{D(\delbar_{\Det E}, h_{\Det E})}=0$.
} hermitian metric $h_{\Det E}$ on $\Det E$ and insist that $h$ induces this fixed hermitian metric on $h_{\Det E}$.

The Hitchin moduli space $\cM$ is a noncompact hyperk\"ahler space
\begin{eqnarray}
 \mathrm{Hit}: \qquad \quad \cM &\rightarrow& \cB \simeq \bigoplus_{i=2}^n H^0(C, K_C^i) \\ \nonumber
 (\delbar_E, \varphi, h) & \mapsto&  \mathrm{char}_\varphi(\lambda) = \lambda^n - \sum_{i=2}^n \lambda^{n-i} q_i  \qquad q_i \in H^0(C, K_C^i). 
\end{eqnarray}

Given a point of the Hitchin base, the associated 
spectral cover is 
\begin{equation}\Sigma=\{\lambda \in K_C: \mathrm{char}_\varphi(\lambda)=0\} \overset{\pi}{\to} C.
\end{equation}
Let $\cB'$ be the locus of points in $\cB$ where the associated spectral cover is smooth.
Note that when $n=2$, $\mathcal{B} \simeq H^0(C, K_C^2)$; the spectral cover is smooth
if, and only if, $q_2=-\det \varphi$ has only simple zeros.

We restrict our attention to the \emph{regular locus} $\cM'=\mathrm{Hit}^{-1}(\cB')$,
since
both hyperk\"ahler metrics, $g_{L^2}$ and $g_{\semif}$, exist and are smooth there.

\subsection{Hitchin's hyperk\"ahler metric, \texorpdfstring{$\mathbf{g}_{L^2}$}{g\_L2}} \label{sec:L2intro}

Hitchin's hyperk\"ahler metric $g_{L^2}$ on the Hitchin moduli space
is usually expressed in the equivalent unitary formulation of Hitchin's equations.
In this formulation, we additionally fix a hermitian metric $h_0$ on the complex vector bundle $E \rightarrow C$. 
Now, the Hitchin moduli space $\cM$ consists of pairs $(\de_A, \Phi)$, where
\begin{itemize}
 \item $\de_A$ is a $h_0$-unitary connection, and
 \item $\Phi \in \Omega^{1,0}(C, \End E)$, 
\end{itemize}
such that $\delbar_A \Phi=0$ and $F_A + [\Phi, \Phi^{\dagger_{h_0}}]=0$---up to $h_0$-unitary gauge equivalence.

We can go back and forth between these two formulations as follows. 
Clearly, given the pair $(\de_A, \Phi)$ we get the associated triple $(\delbar_A, \Phi, h_0)$. Conversely, given a triple $(\delbar_E, \varphi, h)$,
the group of complex gauge transformations acts by
\begin{equation} \label{eq:complexgaugeequiv}
 g \cdot (\delbar_E, \varphi, h) = ( g^{-1} \circ \delbar_E \circ g, g^{-1} \varphi g, g \cdot h) \qquad \mbox{where $(g \cdot h)(w_1,w_2)=h(g w_1,g w_2)$}.
\end{equation}
There is an $\End E$-valued $h_0$-hermitian section $H$ such that $h(w_1,w_2)=h_0(Hw_1,w_2)$.
Taking $g=H^{-1/2}$, then $g \cdot h=h_0$. The associated pair $(\de_A, \Phi)$ is then $\delbar_A = H^{1/2} \circ \delbar_E \circ H^{-1/2}$ and $\Phi = H^{1/2} \varphi H^{-1/2}$.

\bigskip

Following the exposition in \cite[\S2.3]{DumasNeitzke},
to define Hitchin's hyperk\"ahler metric $g_{L^2}$ , we first equip
the space $\Omega^{0,1}(\mathfrak{sl}(E)) \oplus  \Omega^{1,0}(\mathfrak{sl}(E))$
with the $L^2$-metric given by 
\begin{equation} \label{eq:metric1}
 g \left((\dot{A}^{0,1}_1, \dot{\Phi}_1), (\dot{A}^{0,1}_2, \dot{\Phi}_2)\right)
= 2  \mathrm{Re}  \left(\int_C \IP{\dot{A}^{0,1}_1, \dot{A}^{0,1}_2}_{h_0} + \IP{\Phi_1, \Phi_2}_{h_0} \right), 
\end{equation}
where $\IP{\alpha, \beta} = \mathrm{tr}\left(\alpha \wedge \overline{\star} \beta^{\dagger_{h_0}} \right)$ is a two-form. In particular, if $\beta = \beta_z \de z + \beta_{\zbar} \de \zbar$, then $\overline{\star} \beta^{\dagger_{h_0}}= \beta_z^{\dagger_{h_0}} \overline{\star} \de z + \beta_{\zbar}^{\dagger_{h_0}} \overline{\star} \de \zbar= \I \beta_z^{\dagger_{h_0}} \de \zbar -  \I \beta_{\zbar}^{\dagger_{h_0}} \de z$. (Note that because $C$ is K\"ahler, 
the Hodge star on $1$-forms depends only on the conformal class of the metric $g_C$.)

An infinitesimal $h_0$-unitary gauge transformation $\gamma$ gives rise to a deformation $(\dot{A}_\gamma, \dot{\Phi}_\gamma)$. The map is given by
$\rho: \Omega^0(\mathfrak{su}(E)) \rightarrow \Omega^{0,1}(\mathfrak{sl}(E)) \oplus \Omega^{1,0}(\mathfrak{sl}(E))$ \cite[Eq. 2.5]{DumasNeitzke}, where 
\begin{equation}
 \rho(\gamma) = (\dot{A}_\gamma^{0,1}, \dot{\Phi}_\gamma)=(-\delbar_A \gamma, [\gamma, \Phi])
\end{equation}
We decompose the deformation $(\dot{A}^{0,1}, \dot{\Phi})$
as
\begin{equation}
 (\dot{A}^{0,1}, \dot{\Phi}) = (\dot{A}^{0,1}, \dot{\Phi})^{\parallel} + (\dot{A}^{0,1}, \dot{\Phi})^\perp
\end{equation}
where $(\dot{A}^{0,1}, \dot{\Phi})^{\parallel}$ is parallel to the image of $\rho$, and $(\dot{A}^{0,1}, \dot{\Phi})^\perp$ is perpendicular
to the image of $\rho$.
Then, Hitchin's hyperk\"ahler metric $g_{L^2}$ is
\begin{equation}\label{eq:gL2}
\norm{(\dot{A}^{0,1}, \dot{\Phi})}^2_{g_{L^2}}= \norm{(\dot{A}^{0,1}, \dot{\Phi})^\perp}_g^2,
\end{equation}
where $\norm{\cdot}_g$ is the metric from \eqref{eq:metric1}.
Note that $\norm{(\dot{A}^{0,1}_0, \dot{\Phi}_0)}_{g_{L^2}}$ is the minimum of 
$\norm{(\dot{A}^{0,1}, \dot{\Phi})}_{g}$ among all deformations $(\dot{A}^{0,1}, \dot{\Phi})$ in the unitary gauge orbit of $(\dot{A}^{0,1}_0, \dot{\Phi}_0)$.
Such a minimizing deformation is said to be in Coulomb gauge.

\subsection{Limiting configurations and the semiflat metric \texorpdfstring{$\mathbf{g}_{\semif}$}{g\_sf}} \label{sec:sfintro} 

Consider a $1$-parameter family of Hitchin moduli spaces $\cM_t$, each an upgrade of the usual Higgs bundle moduli space $\cM$. The moduli space $\cM_{t}$ consists of triples $(\delbar_E, \varphi, h_t)$ where $(\delbar_E, \varphi)$ is a Higgs bundle and the hermitian metric  $h_t$ solves the \emph{$t$-rescaled Hitchin's equations}
\begin{equation}
 F_{D(\delbar_E, h_t)} + t^2[\varphi, \varphi^{\dagger_{h_t}}]=0.
\end{equation}
Note that $(\delbar_E, \varphi, h_t)$ solves the $t$-rescaled Hitchin's equations if, and only if, $(\delbar_E, t \varphi, h_t)$ solves the usual $t=1$ Hitchin's equations in \eqref{eq:Hitchin}. Consequently, the hermitian metric $h_\infty$ appearing in $\cM_\infty$ is the limiting metric $h_\infty=\lim_{t \to \infty} h_t$ of the family of hermitian metrics in the ray $(\delbar_E, t \varphi, h_t)$ in $\cM$.  We call the triple $(\delbar_E, \varphi, h_\infty)$ in  $\cM_\infty$ a \emph{limiting configuration}.

For $(\delbar_E, \varphi) \in \cM'$, the limiting metric $h_\infty$ is obtained by pushing forward the singular Hermitian-Einstein metric on a certain parabolic line bundle $\cL \rightarrow \Sigma$ \cite{MSWW14, FredricksonSLn}. 
Here, $\cL \rightarrow \Sigma$ is the spectral data associated to the Higgs bundle $(\delbar_E,\varphi)$; it encodes the eigenvalues (as $\Sigma$) and eigenspaces (as $\cL$) of $\varphi: \cE \rightarrow \cE \otimes K_C$, where $\cE=(E, \delbar_E)$.
The spectral curve $\pi: \Sigma \to C$ is ramified at $Z \subset C$.
\begin{figure}[ht]
 \begin{center}
 \includegraphics[height=.75in]{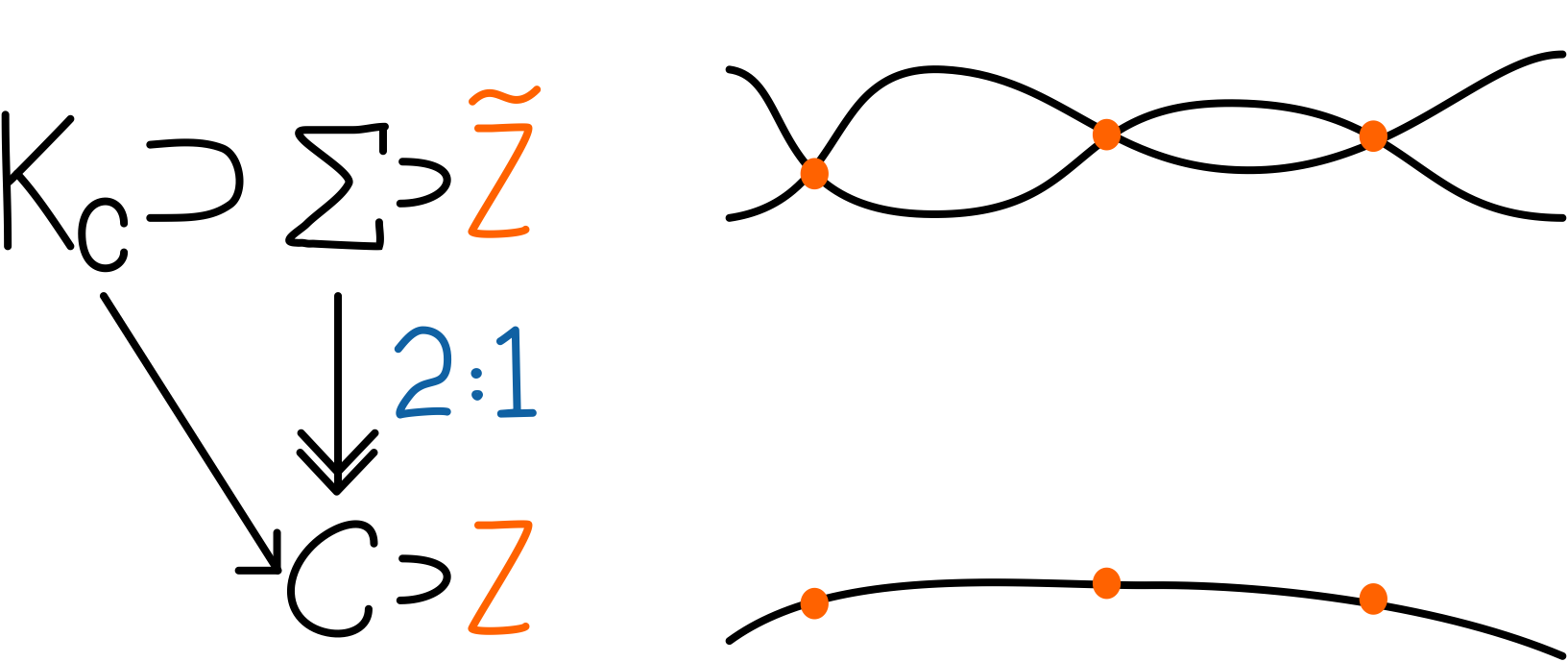}
  \caption{\label{fig:ram}For $SU(2)$, the spectral cover $\Sigma$ is an $2:1$ cover of $C$, ramified at $Z$, the zeros of $\det \varphi$.}
 \end{center}
\end{figure}
For $SU(2)$, we equip the line bundle  $\cL \rightarrow \Sigma$ with weights $-\frac{1}{2}$ at ramification divisor $\widetilde{Z} \subset \Sigma$, which is the set preimages of the branch divisor $Z\subset C$, as shown in Figure \ref{fig:ram}. Then, $h_\infty$ is the orthogonal pushforward of the Hermitian-Einstein metric on this parabolic bundle.  For $SU(n)$ with $n\geq 3$, the construction is similar, and the parabolic weights are described in \cite{FredricksonSLn}.

 Because $h_\infty$ is pushed forward from abelian data, $(\delbar_E, \varphi, h_\infty)$ solves the \emph{decoupled Hitchin's equations}
\begin{equation}
 F_{D(\delbar_E, h_\infty)} =0, \quad [\varphi, \varphi^{\dagger_{h_\infty}}]=0.
\end{equation}
The moduli space of limiting configurations $\cM'_\infty$ has a natural $L^2$-metric $g_{L^2}(\cM'_\infty)$. (Here, the Coulomb gauge condition is only formal.)  

As an algebraic completely integrable system, the Hitchin moduli space  has a natural hyperk\"ahler metric $g_{\semif}$, known as the semiflat metric, that is smooth on the regular locus, $\cM'$\cite[Theorem 3.8]{Freedsemiflat}. In \S\ref{sec:semi}, we review the general definition of the semiflat metric, and give a complete characterization of the semiflat metric on the Hitchin moduli space in Proposition \ref{prop:sfcharacterization}.
In the $SU(2)$ case, Mazzeo-Swoboda-Weiss-Witt prove that these two metrics are equal.
\begin{prop}\label{prop:MSWW} \cite{MSWW17} 
The semiflat metric $g_{\semif}$ is the natural $L^2$-metric on the moduli space of limiting configurations $\cM'_\infty$, for deformations in formal Coulomb gauge.
\end{prop}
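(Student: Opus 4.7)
The approach is to give both $g_{\semif}$ and the formal $L^2$-metric on $\cM'_\infty$ explicit descriptions in terms of abelian data on the spectral cover $\Sigma$, and check that these descriptions agree. Both metrics should exhibit the same splitting into a ``base'' contribution governing infinitesimal changes of the spectral curve and a ``fiber'' contribution governing infinitesimal changes of the spectral line bundle $\cL$, and the point is that at a limiting configuration the relevant geometric objects genuinely live on $\Sigma$.

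First I would use the special K\"ahler structure on $\cB'$ to give a concrete expression for $g_{\semif}$. Following \cite[Theorem 3.8]{Freedsemiflat}, on an algebraic completely integrable system the semiflat metric is fully determined by the periods $\oint_\gamma \lambda$ of the tautological holomorphic $1$-form (here the Seiberg--Witten differential $\lambda$ on $\Sigma\subset K_C$); the base part is the Weil--Petersson-style pairing on the anti-invariant $H^{1,0}(\Sigma)_-$ for the hyperelliptic involution, and the fiber part is the translation-invariant metric on the Prym variety coming from the associated polarization. The outcome is that $g_{\semif}$ is realized as an $L^2$-pairing on $\Sigma$ of the deformation data $(\dot\lambda,\dot a)\in H^0(\Sigma,K_\Sigma)_-\oplus H^1(\Sigma,\R)_-$, using the pointwise metric pulled back from $g_C$. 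Next I would compute the $L^2$-metric on $\cM'_\infty$ through the spectral picture. Away from the branch divisor $Z$, a limiting configuration $(\delbar_E,\varphi,h_\infty)$ is simultaneously diagonalized: in a local eigenframe for $\varphi$, both $\varphi$ and $h_\infty$ are diagonal, and the Chern connection $D(\delbar_E,h_\infty)$ is the pushforward of the Hermitian--Einstein connection on the parabolic line bundle $(\cL,h_{\cL,\infty})$ over $\Sigma$. A tangent vector $(\dot\eta,\dot\varphi)$ then splits, in the eigenframe, into a diagonal part recording $(\dot\Sigma,\dot\cL)$ and an off-diagonal part. Because $[\varphi,\varphi^{\dagger_{h_\infty}}]=0$, the formal Coulomb gauge operator is block-diagonal, and it is invertible on the off-diagonal block, so the off-diagonal component is pure gauge and contributes nothing to the $L^2$-norm. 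The diagonal contribution is an integral over $C$ of pointwise abelian quantities that pull back under $\pi$ to exactly the $\Sigma$-integrand identified with $g_{\semif}$.

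The main obstacle I expect is the local analysis at the branch divisor $Z$. There $h_\infty$ is singular, the diagonalization of $\varphi$ breaks down, and the parabolic weights $-\tfrac{1}{2}$ on $\widetilde Z$ must be tracked with care so that the pushforward identity $\pi_*|\cdot|^2$ is not spoiled by stray boundary contributions and so that off-diagonal gauge-fixing remains invertible in the correct weighted function spaces. Verifying rigorously that the ``formal'' Coulomb gauge condition can be implemented consistently at these singularities, and that the resulting integrand is precisely the one coming from the special K\"ahler description, is the delicate part of the argument; this is in essence where the word \emph{formal} in the statement does the work, and it is also the step that singles out the $SU(2)$ case from the higher-rank parabolic setup of \cite{FredricksonSLn}.
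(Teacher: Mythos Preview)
Your overall strategy is sound and matches the paper's: reduce both metrics to abelian data on the spectral cover $\Sigma$, where the base contribution is $\int_\Sigma 2|\dot\tau|^2$ and the fiber contribution is the $L^2$-norm of the harmonic representative of the line-bundle deformation. The paper (in its $SU(n)$ proof, Theorem~\ref{thm:semiflatisL2}) organizes this a bit differently from you: rather than directly comparing two $\Sigma$-integrals, it first isolates an intrinsic characterization of $g_{\semif}$ by three properties (its value on horizontal deformations, its value on vertical deformations, and orthogonality of the two) in Proposition~\ref{prop:sfcharacterization}, and then verifies that $g_{L^2}(\cM'_\infty)$ pulled back to $\Sigma$ satisfies all three. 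This bypasses your eigenframe ``off-diagonal is pure gauge'' step entirely---on $\Sigma$ there is only a line bundle, so no off-diagonal block ever appears---which is cleaner precisely because the eigenframe degenerates at $Z$.

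Where your sketch is genuinely incomplete is the point you yourself flag: the boundary contribution at the branch locus. You correctly anticipate that the integration-by-parts producing the clean formula \eqref{eq:gsf} throws off a boundary term $\int_{\partial C_\eps}\langle\dot\eta-\delbar_E\dot\nu_\infty,\dot\nu_\infty\rangle_{h_\infty}$, and that one must argue it vanishes as $\eps\to 0$ despite the singularity of $h_\infty$. The paper's mechanism for this is concrete and not just ``formal'': on $\Sigma$, the Chern connection $D(\delbar_L,h_L)$ has monodromy $(-1)^{\mathrm{ord}(\widetilde p)}$ around each ramification point, so its infinitesimal variation has \emph{zero} monodromy there; hence the harmonic $1$-form $(\dot\xi-\delbar_L\dot\nu_L)+(\dot\xi-\delbar_L\dot\nu_L)^{\dagger_{h_L}}$ integrates to zero around small loops, and being harmonic on the punctured disk it must vanish identically near $\widetilde Z$. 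That kills the boundary term. Your proposal gestures at weighted Sobolev spaces and invertibility, but the actual argument is this topological/harmonic one, and without it the comparison is not complete.
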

We extend this result to $SU(n)$ in Theorem \ref{thm:semiflatisL2}.

\subsection{Survey of Previous Results}\label{sec:survey}

\subsubsection{Gaiotto-Moore-Neitzke's conjecture}\label{sec:GMNsurvey}
In \cite{GMNhitchin, GMNwallcrossing}, Gaiotto-Moore-Neitzke conjecture that 
Hitchin's hyperk\"ahler metric solves an integral relation\footnote{The hyperk\"ahler metric $g_{L^2}$ on $\cM$ determines and is completely
determined by a twisted fiber-wise holomorphic symplectic structure on the twistor space $\mathcal{Z}=\cM \times \CP^1$.  The integral relation is actually formulated in terms of certain ``holomorphic Darboux coordinates'' $\mathcal{X}_\gamma$ on the twistor space $\mathcal{Z}=\cM \times \CP^1$}. (The integral equation appears in \cite[Eq 4.8]{Neitzkehyperkahler} 
which is a survey of \cite{GMNhitchin} aimed at mathematical audiences.)
Iterating the integral relation from the initial hyperk\"ahler metric $g_{\semif}$, one expects to approach
the hyperk\"ahler metric $g_{L^2}$.  The first iteration gives the following expression for the difference of the two metrics over a ray $t b \in \mathcal{B}$:

\begin{equation} \label{eq:firstit}
 g_{L^2} = g_{\semif} - \frac{2t^2}{\pi} \sum_{\gamma \in \Gamma_b} \Omega(\gamma; b) K_0(2t|Z_\gamma|) \de |Z_\gamma|^2 + \ldots
\end{equation}
where $\Sigma_b \overset{\pi}{\rightarrow} C$ is the spectral cover, $\Gamma_b$ is a sublattice\footnote{\label{footnote:Bessel}In the $SU(2)$ case, $\Gamma_b=H^{\mathrm{odd}}_1(\Sigma_b, \Z)$, the sublattice of $H_1(\Sigma_b,\Z)$ which is odd under the exchange of the two sheets of $\Sigma$. For $SU(n)$, $\Gamma_b = H_1(\Sigma_b, \Z)_{\sigma}$, a sublattice defined in \eqref{eq:sigma}} of $H_1(\Sigma_b, \Z)$, $K_0$ is the modified Bessel function of the second kind\footnote{The function $K_0(x)$ solves the modified Bessel differential equation $x^2 y''(x) + x y'(x) - x^2 y(x) =0$ on $(0, \infty)$.  Within the two-dimensional family of solutions, the function $K_0(x)$ is determined (up to multiplication by constant) by the property that $\lim_{x \to \infty} K_0(x)=0$.  It is defined by the integral $K_0(x) = \int_0^\infty \frac{\cos(xt)}{\sqrt{t^2+1}} \de t$.}, $\Omega(\gamma; b)$ is an integer-valued generalized Donaldson-Thomas invariant\footnote{
Let $\Gamma \rightarrow \cB'$ be the lattice bundle with fiber $\Gamma_b$.
Then, $\Omega: \Gamma \rightarrow \Z$.  Given a section $\gamma$ of $\Gamma \rightarrow \cB'$, the function $\Omega(\gamma; \; \cdot \;): \cB' \rightarrow \Z$ is typically not continuous.  The value jumps at real-codimension-1 walls in $\cB'$ and satisfies the Kontsevich-Soibelman wall-crossing formula.
} (see \cite{kontsevichsoibelman, JoyceSong} and the discussion in \cite{Neitzkehyperkahler}) depending on $b \in \mathcal{B}$, and $Z$ is the period map 
\begin{equation}
 Z: \Gamma_b \rightarrow \C \qquad Z_\gamma = \oint_\gamma \lambda,
\end{equation}
where $\lambda$ is the tautological (Liouville) $1$-form on $\mathrm{Tot}(K_C)$.
The first correction is from the smallest value $2|Z_{\gamma_0}|$ for which $\Omega(\gamma_0;b) \neq 0$.
Since $K_0(x) \sim \sqrt{ \frac{\pi}{2x}} \e^{-x}$, the first correction $K_0(2t|Z_{\gamma_0}|)$ is of order $O \left(\e^{-2|Z_{\gamma_0}|t} \right)$. The omitted cross-terms in \eqref{eq:firstit} are of order $O \left(\e^{-4|Z_{\gamma_0}|t}\right)$ \cite[Eq. 5.3]{Neitzkenotes}.
\begin{conj}[Weak form of Gaiotto-Moore-Neitzke's conjecture for $\cM_{SU(n)}$] \label{conj:weakGMN}
Fix a Higgs bundle $(\delbar_E, \varphi)$ in $\cM'$.
 Hitchin's $L^2$-metric on $\cM'$ admits an expansion as
 \begin{equation} \label{eq:weakGMN}
 g_{L^2} = g_{\semif} + O \left(\e^{-2|Z_{\gamma_0}|t} \right).
\end{equation}
In the $SU(2)$ case, $|Z_{\gamma_0}|=2M$ where $M$ is the length of a shortest geodesic on the associated spectral cover $\Sigma$, measured in the singular flat metric $\pi^*|\det \varphi|$.
\end{conj}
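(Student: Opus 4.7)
The plan is to recast both $g_{L^2}(\dot\psi_t,\dot\psi_t)$ and $g_{\semif}(\dot\psi_t,\dot\psi_t)$ as $L^2$-norms of Coulomb-gauge representatives of $\dot\psi_t$, and to show that these minimizers differ by $O(\e^{-\gamma t})$. By Proposition \ref{prop:MSWW} and its $SU(n)$ extension (Theorem \ref{thm:semiflatisL2}), $g_{\semif}(\dot\psi_t,\dot\psi_t)$ is the $L^2$-norm of the formal-Coulomb-gauge representative of $\dot\psi_t$ at the limiting configuration $(\delbar_E,\varphi,h_\infty)$, governed by the decoupled Laplacian $\delbar_{A_\infty}^*\delbar_{A_\infty}$ on abelian data. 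By contrast $g_{L^2}(\dot\psi_t,\dot\psi_t)$ is governed by the coupled Laplacian $\Delta_t = \delbar_{A_t}^*\delbar_{A_t} + t^2\,\ad_\Phi\,\ad_{\Phi^{\dagger_{h_t}}}$ computed with the genuine harmonic metric $h_t$. The task thus reduces to controlling the effect of replacing $h_t$ by $h_\infty$ and $\Delta_t$ by its decoupled analog, uniformly in $t$.

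First I would build an approximate harmonic metric $h_t^{\app}$ by gluing $h_\infty$ on the complement of small disks around the branch locus $Z\subset C$ to explicit fiducial solutions inside each disk; adapting the $SU(2)$ construction of \cite{MSWW14} and its $SU(n)$ extension \cite{FredricksonSLn} should yield $\|h_t - h_t^{\app}\|_{C^k} = O(\e^{-\gamma t})$ on all of $C$. In an $h_t^{\app}$-unitary frame diagonalizing $\varphi$ on $C\setminus Z$, one decomposes the deformation as $\dot\psi_t = \dot\psi_t^{\mathrm{diag}} + \dot\psi_t^{\mathrm{off}}$, where the diagonal piece lives on line-bundle data over the spectral cover $\Sigma\to C$ and the off-diagonal piece couples distinct sheets of $\Sigma$.

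For the diagonal piece, the Coulomb-gauge equation collapses to a system of scalar $\delbar\delbar^*$ Laplace equations on sections of line bundles over $\Sigma$, which is precisely the equation computing $g_{\semif}$ via Theorem \ref{thm:semiflatisL2}; the error from using $h_t$ rather than $h_\infty$ is then exponentially small. For the off-diagonal piece the key ingredient is a spectral gap: on compact subsets of $C\setminus Z$, the operator $\ad_{\Phi}\,\ad_{\Phi^{\dagger_{h_\infty}}}$ acts on the $(ij)$-block with lowest eigenvalue $|\lambda_i-\lambda_j|^2$ bounded below, so $\Delta_t\geq c\,t^2$ there and the off-diagonal Coulomb-gauge correction has $L^2$-mass $O(\e^{-\gamma t})$.

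The main obstacle is the behavior near $Z$, where $|\lambda_i-\lambda_j|^2$ degenerates and the diagonal/off-diagonal split breaks down. Inside each disk around a branch point, the analysis must be replaced by that of a model fiducial operator whose inverse is quantitatively understood from \cite{MSWW14,FredricksonSLn}; the fiducial estimate shows that the off-diagonal Coulomb-gauge correction localized near $Z$ still has $L^2$-mass $O(\e^{-\gamma t})$, with the constant $\gamma$ controlled by the smallest period $|Z_{\gamma_0}|$ appearing in Conjecture \ref{conj:weakGMN}. Assembling the exponential convergence $h_t\to h_t^{\app}$, the spectral-gap estimate on $C\setminus Z$, and the fiducial-model estimate near $Z$ then yields the expansion \eqref{eq:summary}.
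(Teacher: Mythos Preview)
The statement you are attempting to prove is labeled and stated in the paper as a \emph{conjecture}, not a theorem; the paper does not supply a proof of it. What the paper actually proves is the strictly weaker Theorem~\ref{thm:main}, namely $g_{L^2}-g_{\semif}=O(\e^{-\gamma t})$ for \emph{some} unspecified $\gamma>0$, with no claim that $\gamma$ can be taken equal to the sharp predicted value $2|Z_{\gamma_0}|$. Your final sentence asserts that ``the constant $\gamma$ [is] controlled by the smallest period $|Z_{\gamma_0}|$,'' but nothing in your sketch produces that specific constant; the fiducial analysis of \cite{MSWW14,FredricksonSLn} yields an exponential rate coming from the decay of the Painlev\'e function $u_t$ on a fixed disk, which bears no evident relation to $|Z_{\gamma_0}|$. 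So as a proof of Conjecture~\ref{conj:weakGMN} the proposal has a genuine gap: the sharp constant is simply not established, and indeed remains open.

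Even as a proof of the weaker Theorem~\ref{thm:main}, your outline diverges from the paper and omits its central idea. The paper splits $g_{L^2}-g_{\semif}=(g_{L^2}-g_{\app})+(g_{\app}-g_{\semif})$; the first term is exponentially small by \cite{MSWW14,MSWW17}, and the second is supported only on the disks around $Z$. The crux---and the advance over \cite{MSWW17}, where this second term was only shown to be polynomially decaying---is the Dumas--Neitzke technique: one chooses a representative of the deformation on each disk using a holomorphic flow adapted to the motion of the zero of $q_2$, after which the most dangerous piece of the integrand becomes an \emph{exact} form whose boundary integral is exponentially small, and the remainder is handled by a maximum-principle argument. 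Your proposal instead relies on a diagonal/off-diagonal decomposition and a spectral-gap argument for $\ad_\Phi\ad_{\Phi^{\dagger}}$. That spectral gap degenerates precisely at $Z$, and the sentence ``the fiducial estimate shows that the off-diagonal Coulomb-gauge correction localized near $Z$ still has $L^2$-mass $O(\e^{-\gamma t})$'' is doing all the work without justification: the fiducial estimates in \cite{MSWW14} by themselves yield only the polynomial expansion \eqref{eq:MSWWexpansion}, and upgrading to exponential decay on the disks is exactly where the holomorphic-variation idea is needed.
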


A number of previous papers made progress towards proving 
this conjecture for $SU(2)$.

\subsubsection{Mazzeo-Swoboda-Weiss-Witt's description of the ends of \texorpdfstring{$\cM'_{SU(2)}$}{M'} using approximate solutions}\label{sec:MSWWsurvey}
In \cite{MSWW17}, Mazzeo-Swoboda-Weiss-Witt prove that $g_{L^2}$ admits an  expansion as
\begin{equation}\label{eq:MSWWexpansion}
   g_{L^2}(\dot{\psi}_t, \dot{\psi}_t) = g_{\semif}(\dot{\psi}_t, \dot{\psi}_t) +  \sum_{j=0}^\infty t^{(4-j)/3} G_j +O(\e^{-\gamma t}),
\end{equation}
where $G_j$ is a symmetric $2$-tensor and $\psi_t=(\dot{\eta}, t \dot{\varphi})$ is an infinitesimal deformation of $(\delbar_E, t \varphi)$.  (Since $\norm{\psi_t}^2$ grows quadratically in $t$ the first possible polynomial term is actually of order $t^{-2/3} \norm{\dot \psi_t}^2$. Thus, it is polynomially-decaying even though
it appears in \eqref{eq:MSWWexpansion} to be polynomially-growing.)

The result in \cite{MSWW17} is built on the description of $h_t$ near the ends
of $\cM'$ given in \cite{MSWW14}. 
They build an approximate solution of Hitchin's equations 
$h_t^\app$ close to $h_t$ \cite[Theorem 6.7]{MSWW14}
by desingularizing the singular metric $h_\infty$ on disks $\mathbb{D}$ around the zeros of $q_2$.  In particular, around each zero of $q_2$, there is a local coordinate $z$ in which $q_2=z \de z^2$ and a holomorphic frame in which
 \begin{equation} \label{eq:shape}
  \delbar_E =\delbar \qquad \varphi = \begin{pmatrix}0 & z \\ 1 & 0 \end{pmatrix} \de z
  \qquad \left.h_\infty \right|_{\mathbb{D}} = \begin{pmatrix} |z|^{-1/2} & 0\\ 0& |z|^{1/2} \end{pmatrix}.
 \end{equation}
The metric has a desingularization given by 
\begin{equation}\label{eq:htmodel}
\left. h^{\mathrm{model}}_t \right|_{\mathbb{D}} = \begin{pmatrix} |z|^{-1/2}\e^{-u_t(|z|)} & 0\\ 0& |z|^{1/2}\e^{u_t(|z|)} \end{pmatrix},
\end{equation}
where $u_t: \R^{+} \rightarrow \R$
solves
 \begin{equation} \label{eq:umodel}
\left(\frac{\de^2}{\de |z|^2} + \frac{1}{|z|} \frac{\de}{\de |z|} \right) u_t - 8t^2|z|\sinh(2u_t)=0,
\end{equation}
with asymptotics \\
\begin{tabular}{ l l l }\label{eq:asymptotics}
 \hspace{2in} &$u_t(|z|) \sim  \frac{1}{\pi} K_0(\frac{8t}{3}|z|^{\frac{3}{2}})$ & as $|z|\rightarrow \infty$ \\
 &$u_t(|z|) \sim - \frac{1}{2} \log (|z|)$
 & as $|z| \rightarrow 0$.
\end{tabular}\\
The approximate metric is built by interpolating between $h^{\mathrm{model}}_t$ on the disks and $h_\infty$ on the complement,
simply by adding the cutoff function $\chi$
\begin{equation} \label{eq:htapprox}
 \left.h^{\mathrm{app}}_t \right|_{\mathbb{D}} = \begin{pmatrix} |z|^{-1/2}\e^{-u_t \chi} & 0\\0 & |z|^{1/2}\e^{u_t \chi} \end{pmatrix}.
\end{equation}
The metric $h^{\app}_t$ fails to solve Hitchin's equations on the annulus where $\chi$ is not equal to $0$ or $1$.
Thus, they call the moduli space of triples $(\delbar_E, t \varphi, h_t^{\mathrm{app}})$ 
the ``approximate Hitchin moduli space.'' (Note that in their construction of approximate
solutions, they use the fact that Hitchin's equations are conformal, and replace $g_C$ with   a conformal metric $g_C'$ which is flat on the disks around each of the zeros of $q_2$.)

Turning back to the difference between Hitchin's hyperk\"ahler metric and the semiflat metric,
Mazzeo-Swoboda-Weiss-Witt break the difference into the following pieces and separately analyze each:
 \begin{eqnarray} 
  g_{L^2} - g_{\semif}
&=&
   \left(g_{L^2}
   - g_{\app}
   \right)
   + \left(  g_{\app}
   - g_{\semif}
   \right).
 \end{eqnarray}
Here, $g_{\app}$ is the $L^2$-metric on the ``approximate Hitchin moduli space''
for deformations in formal Coulomb gauge.
Note that all of their possible polynomial terms come
from the second term, $g_{\app}
- g_{\semif}
$.  Moreover, since $h_t^{\app}=h_\infty$ on the complement of the disks, the integrand of the difference of the $L^2$-metrics 
$g_{\app}
- g_{\semif}
$ is non-zero only on the disks.

\subsubsection{Dumas-Neitzke's description of the asymptotic geometry of the Hitchin section of \texorpdfstring{$\cM'_{SU(2))}$}{M'}}\label{sec:DNsurvey}
In \cite{DumasNeitzke}, Dumas-Neitzke restrict from the regular locus of the Hitchin moduli space $\cM'$ to the image of the Hitchin section with its tangent space. They prove that $g_{L^2}-g_{\semif}$ is exponentially-decaying, and in particular
 \begin{equation} \label{eq:DNsummary}
  g_{L^2}(\dot{\psi}_t, \dot{\psi}_t) = g_{\semif}(\dot{\psi}_t, \dot{\psi}_t) +  O(\e^{-2M t}),
 \end{equation}
 where $M$ is the length of a shortest geodesic
 on $\Sigma$ measured in the singular flat metric $\pi^*|\det \varphi|$.
 Comparing this to the weak form of Gaiotto-Moore-Neitzke's conjecture given in Conjecture \ref{conj:weakGMN}, note that this is \emph{only} off from the conjectured sharpest coefficient of exponential decay by a factor of $2$!
 
 Though Dumas-Neitzke do not use the approximate solutions in \cite{MSWW14},
Dumas-Neitzke essentially have a very clever way of dealing with the term
$g_{\app}(\dot{\psi}_t, \dot{\psi}_t) - g_{\semif}(\dot{\psi}_t, \dot{\psi}_t)$ on the disks.  
The possible polynomial terms in \eqref{eq:MSWWexpansion} are from variations in which the zeros of the quadratic differential $\det(\varphi + \eps \dot{\varphi})$ move.
Dumas-Neitzke use a local biholomorphic flow on the disks around each zero of $q_2$ that perfectly matches the changing location of the zero of $q_2+ \eps \dot{q_2}$.  Moreover, surprisingly,
the most seemingly problematic piece of the integrand for the difference $g_{\app}(\dot{\psi}_t, \dot{\psi}_t) - g_{\semif}(\dot{\psi}_t, \dot{\psi}_t)$ turns out to be an exact form that they can control.  See  \S\ref{sec:near} for a more detailed exposition of Dumas-Neitzke's work.

\subsection{Strategy in Present Work}\label{sec:compare}

For notational convenience and to better
elucidate the relation of the present
work with Dumas-Neitzke and Mazzeo-Swoboda-Weiss-Witt,  we first prove the result for $SU(2)$ in \S\ref{sec:rank2} and then
prove the extension to $SU(n)$ in \S\ref{sec:extension}.

Compared to these two previous papers,
our proof for $n=2$ can be seen as an extension
of the method of Dumas-Neitzke \cite{DumasNeitzke}
using the analysis and approximate solutions of Mazzeo-Swoboda-Weiss-Witt. (In the extension to $SU(n)$, we make use of the construction of approximate solutions $h_t^{\app}$ in \cite{FredricksonSLn}.)  As in \cite{MSWW17}, we break the integral into two parts
 \begin{eqnarray} 
  g_{L^2}(\dot{\psi}_t, \dot{\psi}_t) - g_{\semif}(\dot{\psi}_t, \dot{\psi}_t) &=&
   \left(g_{L^2}(\dot{\psi}_t, \dot{\psi}_t) - g_{\app}(\dot{\psi}_t, \dot{\psi}_t) \right)\\  \nonumber & &  + \left(  g_{\app}(\dot{\psi}_t, \dot{\psi}_t) - g_{\semif}(\dot{\psi}_t, \dot{\psi}_t) \right).
 \end{eqnarray}
Using the analysis in \cite{MSWW14}, Mazzeo-Swoboda-Weiss-Witt prove that the first term  $g_{L^2}-g_{\app}$ decays like $\e^{-\gamma t}$. (Since their coefficient of exponential decay is not some known fraction of the conjectural sharp $-4Mt$, we are not careful about the exact coefficient of exponential decay.) 
We prove that the second term $g_{\app}-g_{\semif}$ is exponentially-decaying in \S\ref{sec:near} using an adaptation of the method of Dumas-Neitzke.

\medskip

We give a formulation of the hyperk\"ahler metric in terms of $SL(n,\C)$-Higgs bundle deformations in \S\ref{sec:Higgsdef}.
Typically, the $L^2$-metric on the Hitchin moduli space is
expressed in the unitary formulation of Hitchin's equations in terms of pairs $(\dot{A}^{0,1}, \dot{\Phi})$.
However, in the unitary formulation, the diffeomorphisms between
 $\cM'$, $\cM'_{\app}$ and $\cM'_{\infty}$ are obscured.
It is far easier to formulate everything in terms of triples $(\delbar_E, \varphi, h)$,
since the same Higgs bundle moduli space underlies $\cM'$, $\cM'_{\app}$ and $\cM'_{\infty}$.
In \S\ref{sec:Higgsdef}, we formulate the tangent space in terms of triples $(\dot{\eta}, \dot{\varphi}, \dot{\nu})$ where $\dot{\eta}$ is a variation of holomorphic structure,
$\dot{\varphi}$ is a variation of the Higgs field, and $\dot{\nu}$ is a variation of the 
hermitian metric determined from $(\dot{\eta}, \dot{\varphi})$ by the equation  \eqref{eq:ingaugetriple}.  Notably, in Proposition \ref{prop:metricdiff}
we give the following clean expression for Hitchin's hyperk\"ahler metric on $\cM$.
Given $(\dot{\eta}, \dot{\varphi}, \dot{\nu}) \in T_{(\delbar_E, \varphi, h)} \cM$, 
\begin{equation}\label{eq:highlight1}
 \norm{(\dot{\eta}, \dot{\varphi}, \dot{\nu})}_{g_{L^2}}^2
   = 2 \int_C \IP{ \dot{\eta} - \delbar_E \dot{\nu},  \dot{\eta}}_h  +
 \IP{\dot{\varphi} +[\dot{\nu}, \varphi], \dot{\varphi}}_h.
\end{equation}
Here, as explained more fully in \eqref{eq:metric1}, $\IP{ \dot{\eta} - \delbar_E \dot{\nu},  \dot{\eta}}_h=\mathrm{tr} \left( (\dot{\eta} - \delbar_E \dot{\nu}) \wedge \overline{\star} \dot{\varphi}^{\dagger_h} \right)$ is a two-form, where $\star$ is the Hodge star  for the metric $g_C$ on $C$.
Note that the fixed background  metric $h_0$ on $E$ characteristic of the unitary formulation of Hitchin's hyperk\"ahler metric does not appear.  
This clean expression uses a somewhat surprising fact (Proposition \ref{prop:coulombgauge}):
we can write a single $\mathfrak{sl}(E)$-valued equation for $(\dot{A}^{0,1}, \dot{\Phi})$ combining (1) the infinitesimal variation of $F_A+[\Phi, \Phi^{\dagger_{h_0}}]=0$  and (2) the Coulomb gauge condition. This is the generalization of and explanation for
 the useful complex scalar PDE featured in \cite[Eq. 4.14]{DumasNeitzke}.
 
 The lengthy \S\ref{sec:Higgsdef} contains two more notable results: In Proposition 
  \ref{prop:sfcharacterization} we give a practical characterization of the semiflat metric. 
 In Theorem \ref{thm:semiflatisL2}, we prove that the $L^2$-metric on the moduli space of limiting
configurations $\cM'_\infty$ agrees with the semiflat metric for $SU(n)$, generalizing Mazzeo-Swoboda-Weiss-Witt's proof for $SU(2)$ in \cite{MSWW17}.  Our proof is simpler, making use of the spectral cover and the analog of \eqref{eq:highlight1} for $\cM'_\infty$.

 \begin{rem}
 We try to use the notation of Mazzeo-Swoboda-Weiss-Witt and Dumas-Neitzke for ease of comparison. However, there is one point of divergence that has the possibility to be especially confusing, since we instead use the notation in \cite{FredricksonSLn, FredricksonNeitzke}. The local harmonic metric in the local holomorphic frame of \eqref{eq:shape} is written respectively
 in these various papers as
 \begin{equation*}
\left. h^{\mathrm{model}}_t \right|_{\mathbb{D}} = \underbrace{\begin{pmatrix} |z|^{-\frac{1}{2}}\e^{-h^{MSWW}_t} &0 \\0 & |z|^{\frac{1}{2}}\e^{h_t^{MSWW}} \end{pmatrix}}_{\mbox{\cite[Eq. 21]{MSWW14}}}=\underbrace{
\begin{pmatrix} \e^{-u_t^{DN}} & 0\\0 & \e^{u_t^{DN}} \end{pmatrix}}_{\mbox{\cite[Eq. 4.1]{DumasNeitzke}}}=\underbrace{
\begin{pmatrix} |z|^{-\frac{1}{2}}\e^{-u_t} &0 \\0 & |z|^{\frac{1}{2}}\e^{u_t} \end{pmatrix}}_{\mbox{\cite{FredricksonSLn, FredricksonNeitzke},\eqref{eq:htmodel}}},
\end{equation*}
so that our $u_t = u_t^{DN} - \frac{1}{2} \log |z|=h_t^{MSWW}$.
\end{rem}

\subsection{Acknowledgements}
The author thanks Rafe Mazzeo,  Andy Neitzke, Hartmut Weiss, and \'Akos Nagy for helpful discussions
related to this work, and Georgios Kydonakis for his comments on an earlier version.
The author also thanks the anonymous referee for helpful comments.

\section{Hyperk\"ahler metrics in terms of Higgs bundle deformations}\label{sec:Higgsdef}

There is a canonical diffeomorphism $\cM' \rightarrow \cM'_\infty$. It is most explicitly given in terms of triples
\begin{eqnarray}
\mathcal{F}: \qquad  \quad \cM' &\rightarrow&  \cM_\infty'\\ \nonumber
[(\delbar_E, \varphi, h)] &\mapsto& [(\delbar_E, \varphi, h_\infty)],
\end{eqnarray}
because the Higgs bundle moduli space underlies both $\cM'=\{(\delbar_E, \varphi, h)\}/_\sim$ and $\cM'_\infty=\{(\delbar_E, \varphi, h_\infty)\}/_\sim$.
The hyperk\"ahler metric $g_{L^2}$ is naturally a metric on $\cM'$, while $g_{\semif}$ is most naturally a metric on $\cM'_\infty$. (In \S\ref{sec:semi}, we prove the semiflat metric is the natural $L^2$-metric on $\cM'_\infty$, generalizing Mazzeo-Swoboda-Weiss-Witt's result for $SU(2)$.)
Consequently, to compare the two hyperk\"ahler metrics $g_{L^2}$ and $\mathcal{F}^*g_{\semif}$ on $\cM'$, we 
must describe the map from the tangent space to $\cM'$ 
to the tangent space to $\cM'_\infty$.

Even though the metrics $g_{L^2}$ and $g_{\semif}$ are normally written in terms of unitary deformations $(\dot{A}^{0,1}, \dot \Phi)$ and $(\dot{A}^{0,1}_\infty, \dot \Phi_\infty)$, it will be convenient to express them  instead in terms of 
the underlying Higgs bundle deformation $(\dot{\eta}, \dot{\varphi})$,
and respective $\mathfrak{sl}(E)$-valued metric variations $\dot{\nu}$ and $\dot{\nu}_\infty$, which we introduce respectively in \S\ref{sec:tangentspace} and \S\ref{sec:othertangentspaces}. We express $g_{L^2}$ in terms of the triple $(\dot{\eta}, \dot{\varphi}, \dot{\nu})$ in Proposition \ref{prop:metricdiff} and  we express the semiflat metric in terms of the triple $(\dot{\eta}, \dot{\varphi}, \dot{\nu}_\infty)$ in Theorem \ref{thm:semiflatisL2}. The induced canonical map is
\begin{eqnarray}
\de \mathcal{F}: \qquad T_{(\delbar_E, \varphi, h)} \cM' &\rightarrow&  T_{(\delbar_E, \varphi, h_\infty)}\cM_\infty' \\ \nonumber
 [(\dot{\eta}, \dot{\varphi}, \dot{\nu})] &\mapsto&  [(\dot{\eta}, \dot{\varphi}, \dot{\nu}_\infty)]. 
\end{eqnarray}
In the main theorem, Theorem \ref{thm:main}, we prove that
\begin{equation}
 \norm{(\dot{\eta}, t\dot{\varphi}, \dot{\nu}_t)}_{g_{L^2}}^2 -  \norm{\underbrace{(\dot{\eta}, t\dot{\varphi}, \dot{\nu}_\infty)}_{\de \mathcal{F} ((\dot{\eta}, t\dot{\varphi}, \dot{\nu}_t))}}_{g_{\semif}}^2 = O(\e^{-\gamma t}).
\end{equation}

\begin{rem}The results in this entire section hold for $G=SU(n)$.
\end{rem}

\subsection{The tangent space to \texorpdfstring{$\cM$}{M}}  \label{sec:tangentspace}

Consider the family of deformations of the Higgs bundle $(\delbar_E, \varphi)$ given by
\begin{eqnarray} \label{eq:defhol}
 (\delbar_E)_\eps &=& \delbar_E + \eps \dot \eta\\ \nonumber
 \varphi_\eps &=& \varphi + \eps \dot \varphi
\end{eqnarray}
where $\dot \eta$ is $(0,1)$-valued in $\End E$ (giving an infinitesimal deformation of the holomorphic structure), $\dot{\varphi}$ is $(1,0)$-valued in $\End E$, and 
\begin{equation}
 \delbar_E \dot \varphi + [\dot \eta, \varphi]=0,
\end{equation}
so that $(\dot{\eta}, \dot{\varphi})$ solves the infinitesimal version of
the Higgs bundle equation $\delbar_E \varphi=0$.
Two infinitesimal Higgs bundle deformations determine
the same class if there is an infinitesimal gauge transformation $\dot{\gamma} \in H^2(\End E)$ such that
\begin{eqnarray} \label{eq:infinitesimal}
 \dot{\eta}_2 - \dot{\eta}_1 &=& \delbar_E \dot{\gamma}\\ \nonumber
 \dot{\varphi}_2 - \dot{\varphi}_1 &=& [\varphi, \dot{\gamma}].
\end{eqnarray}

Now, consider a triple $(\delbar_E, \varphi, h)$ with Higgs bundle deformation $(\dot{\eta}, \dot{\varphi})$. Additionally deform the hermitian metric by including an $\mathfrak{sl}(E)$-valued section $\dot{\nu}$:
\begin{equation}
 h_\eps(w_1,w_2) = h(\e^{\eps \dot{\nu}} w_1, \e^{\eps \dot{\nu}} w_2).
\end{equation}
Note that $h_\eps(w_1,w_2) = h(w_1, w_2) + \eps \left( h( \dot{\nu} w_1, w_2) + h(w_1, \dot{\nu}w_2) \right) + O(\eps^2)$. We see that $[(\dot{\eta}_1, \dot{\varphi}_1, \dot{\nu}_1)]=[(\dot{\eta}_2, \dot{\varphi}_2, \dot{\nu}_2)]$ if there is an infinitesimal gauge transformation $\dot{\gamma} \in L^{2,2}(\End E)$ such that \eqref{eq:infinitesimal} holds and additionally
\begin{equation}\label{eq:infinitesimalnu}
\dot{\nu}_2 -\dot{\nu}_1 = \dot{\gamma}.
\end{equation}
\medskip
We now express this deformation in the more standard 
unitary formulation of Hitchin's equations, as described in \S\ref{sec:L2intro}. 
Given the triple $(\delbar_E, \varphi, h)$, there is an $\End E$-valued $h_0$-hermitian section $H$ such that $h(w_1,w_1)=h_0(Hw_1,w_2)$. 
Consequently, 
\begin{equation}h_\eps(w_1,w_2)=h(\e^{\eps \dot{\nu}} w_1, \e^{\eps \dot{\nu}} w_2) = h_0(\e^{\eps \dot{\nu}^{\dagger_{h_0}}} H \e^{\eps \dot{\nu}} \;  w_1, w_2)
\end{equation}
Taking the gauge transformation $g_\eps = \e^{-\eps \dot{\nu}} H^{-1/2} $ and gauge action in
\eqref{eq:complexgaugeequiv}, note that
\begin{equation}
 g_\eps \cdot ((\delbar_E)_\eps, \varphi_\eps , h_\eps) = (g_\eps^{-1} \circ (\delbar_E)_\eps \circ g_\eps, g^{-1}_\eps \varphi_\eps g_\eps, h_0),
\end{equation}
consequently, we've passed to the unitary formulation of Hitchin's equations.
Then 
\begin{equation}
 g_\eps^{-1} \circ (\delbar_E)_\eps \circ g_\eps=\delbar_A + \eps \dot{A}^{0,1} + O(\eps^2) \qquad  g^{-1}_\eps \varphi_\eps g_\eps =  \Phi + \eps \dot{\Phi} +O(\eps^2)
\end{equation}
where
\begin{equation} \label{eq:defun}
 \dot A^{0,1}
=H^{1/2}  \left( \dot{\eta} - \delbar_E \dot{\nu} \right) H^{-1/2} \qquad \dot{\Phi} = H^{1/2} \left(\dot{\varphi} + [\dot{\nu}, \varphi] \right)  H^{-1/2}.
\end{equation}
is the unitary deformation associated to $(\dot{\eta}, \dot{\varphi}, \dot{\nu})$.

\subsection{Representatives of the tangent space to \texorpdfstring{$\cM'$}{M'} in Coulomb gauge} \label{sec:ingauge}

In the unitary formulation of Hitchin's equations, given $(\dot{A}^{0,1}, \dot{\Phi}) \in T_{(A^{0,1}, \Phi)} \cM$, the pair $(\dot{A}^{0,1}, \dot{\Phi})$ must solve the following three equations, which respectively
encode the infinitesimal version of $\delbar_A \Phi=0$, $F_A + [\Phi, \Phi^{\dagger_{h_0}}]=0$ 
and the formal Coulomb gauge condition (see \cite[Eq. 10]{MSWW17}, \cite[Eq. 4.7]{DumasNeitzke}):
\begin{eqnarray} \label{eq:three}
 \delbar_A \dot{\Phi} + [\dot{A}^{0,1}, \Phi]&=&0\\ \nonumber
 \de_A \dot{A} +[\Phi, \dot{\Phi}^{\dagger_{h_0}}] + [\dot{\Phi}, \Phi^{\dagger_{h_0}}]&=&0\\ \nonumber
 \de_A \star \dot{A} - [\dot{\Phi} - \dot{\Phi}^{\dagger_{h_0}}, \star (\Phi - \Phi^{\dagger_{h_0}})] &=&0.
\end{eqnarray}
Note that the Coulomb gauge condition and infinitesimal version of $F_A +[\Phi, \Phi^{\dagger_{h_0}}]=0$ each impose
an $\mathfrak{su}(E)$-valued equation on $(\dot{A}^{0,1}, \dot{\Phi})$.
The following proposition packages these into a single $\mathrm{sl}(E)$-valued equation on $(\dot{A}^{0,1}, \dot{\Phi})$. 

\begin{prop}\label{prop:coulombgauge}
 Let $(\Phi, A^{0,1})$ be a solution of the $SU(n)$-Hitchin's equations.
 A deformation $(\dot{\Phi}, \dot{A}^{0,1}) \in \Omega^{1,0} \oplus \Omega^{0,1}$ 
 is (1) in Coulomb gauge and (2) solves the infinitesimal version of $F_A+[\Phi, \Phi^{\dagger_{h_0}}]=0$ if, and only if,
 \begin{equation} \label{eq:ingauge}
  \del_A^{h_0} 
  \dot{A}^{0,1} + [\Phi^{\dagger_{h_0}}, \dot{\Phi}]=0.
 \end{equation}
\end{prop}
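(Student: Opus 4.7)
The plan is to package the two $\mathfrak{su}(E)$-valued conditions appearing as the second and third equations of \eqref{eq:three} as the real and imaginary parts of the single $\mathfrak{sl}(E)$-valued equation \eqref{eq:ingauge}. I set
\[
\omega := \del_A^{h_0} \dot{A}^{0,1} + [\Phi^{\dagger_{h_0}}, \dot{\Phi}],
\]
which is an $\mathfrak{sl}(E)$-valued $(1,1)$-form. Using the orthogonal splitting $\mathfrak{sl}(E) = \mathfrak{su}(E) \oplus \I\, \mathfrak{su}(E)$ into $h_0$-anti-hermitian and $h_0$-hermitian pieces, the equation $\omega = 0$ is equivalent to the simultaneous vanishing of $\omega - \omega^{\dagger_{h_0}}$ and $\omega + \omega^{\dagger_{h_0}}$. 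The content of the proposition will be that these two real equations are respectively the linearization of $F_A + [\Phi, \Phi^{\dagger_{h_0}}]=0$ and (up to a factor of $\I$) the Coulomb gauge condition.

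To compute $\omega^{\dagger_{h_0}}$, I need three sign-bookkeeping ingredients: (i) $(\dot{A}^{0,1})^{\dagger_{h_0}} = -\dot{A}^{1,0}$, which follows because $\dot{A}\in\Omega^1(\mathfrak{su}(E))$; (ii) the Chern-connection identity $(\del_A^{h_0} \alpha)^{\dagger_{h_0}} = -\delbar_A(\alpha^{\dagger_{h_0}})$ for a $(0,1)$-form $\alpha$, which combined with (i) yields $(\del_A^{h_0} \dot{A}^{0,1})^{\dagger_{h_0}} = -\delbar_A \dot{A}^{1,0}$; and (iii) the graded-bracket identity $([\Phi^{\dagger_{h_0}}, \dot{\Phi}])^{\dagger_{h_0}} = -[\Phi, \dot{\Phi}^{\dagger_{h_0}}]$. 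Assembling these gives $\omega^{\dagger_{h_0}} = -\delbar_A \dot{A}^{1,0} - [\Phi, \dot{\Phi}^{\dagger_{h_0}}]$, so the anti-hermitian part is
\[
\omega - \omega^{\dagger_{h_0}} = \del_A^{h_0} \dot{A}^{0,1} + \delbar_A \dot{A}^{1,0} + [\Phi, \dot{\Phi}^{\dagger_{h_0}}] + [\Phi^{\dagger_{h_0}}, \dot{\Phi}].
\]
Using the symmetry $[\Phi^{\dagger_{h_0}}, \dot{\Phi}] = [\dot{\Phi}, \Phi^{\dagger_{h_0}}]$ of the graded bracket of $\End E$-valued $1$-forms, this is precisely $\de_A \dot{A} + [\Phi, \dot{\Phi}^{\dagger_{h_0}}] + [\dot{\Phi}, \Phi^{\dagger_{h_0}}]$, i.e., the linearized $F$-equation.

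For the hermitian part, I use the local Hodge star formulas $\star\, dz = -\I\, dz$ and $\star\, d\bar{z} = \I\, d\bar{z}$ to obtain $\star \dot{A} = \I(\dot{A}^{0,1} - \dot{A}^{1,0})$. Since on the Riemann surface $C$ we have $\delbar_A \dot{A}^{0,1}=0$ and $\del_A^{h_0}\dot{A}^{1,0}=0$ for pure degree reasons, this gives $\de_A \star \dot{A} = \I(\del_A^{h_0} \dot{A}^{0,1} - \delbar_A \dot{A}^{1,0})$. A parallel computation rewrites the bracket term of the Coulomb gauge condition as $\I([\Phi, \dot{\Phi}^{\dagger_{h_0}}] - [\Phi^{\dagger_{h_0}}, \dot{\Phi}])$, so that after subtraction the entire Coulomb gauge condition becomes $\I(\omega + \omega^{\dagger_{h_0}}) = 0$, i.e., the vanishing of the hermitian part of $\omega$.

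I do not expect a substantive obstacle: the proof is a bookkeeping computation whose only pitfalls are tracking signs from (a) the graded bracket on $\End E$-valued forms, (b) the $dz \wedge d\bar z$ versus $d\bar z \wedge dz$ swap that appears when adjoining $(1,1)$-forms, and (c) the Hodge star on $1$-forms of pure type. The conceptual content—which the calculation only verifies—is that two $\mathfrak{su}(E)$-valued equations can be combined complex-linearly into a single $\mathfrak{sl}(E)$-valued equation, explaining and generalizing the scalar PDE used in \cite[Eq.~4.14]{DumasNeitzke}. Note that the argument is purely algebraic in $(\dot{A}^{0,1},\dot{\Phi})$ and uses the Hitchin equations on $(A^{0,1},\Phi)$ only through the $h_0$-unitarity of $A$ needed for (ii).
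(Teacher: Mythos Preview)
Your proposal is correct and takes essentially the same approach as the paper: both split $\omega = \del_A^{h_0}\dot{A}^{0,1} + [\Phi^{\dagger_{h_0}}, \dot{\Phi}]$ into its $h_0$-hermitian and $h_0$-anti-hermitian parts (what the paper calls the ``imaginary'' and ``real'' parts, with the convention $(X\,\de z\wedge\de\zbar)^{\dagger_{h_0}} = X^{\dagger_{h_0}}\,\de\zbar\wedge\de z$) and identify these, via the Hodge-star formulas $\star\,\de z = -\I\,\de z$, $\star\,\de\zbar = \I\,\de\zbar$, with the Coulomb gauge condition and the linearized curvature equation respectively. Your explicit enumeration of the sign ingredients (i)--(iii) is a slightly more organized version of the same computation the paper carries out line by line.
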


\begin{rem}
 In the case of the $4d$ anti-self-dual Yang-Mills equations (of which Hitchin's equations are a dimensional reduction), the linearization of $F_A^+=0$ and the Coulomb gauge condition can similarly be packaged into a single equation \cite[p. 55]{DonaldsonKronheimer}.
\end{rem}

\begin{proof}[Proof of Proposition \ref{prop:coulombgauge}]
The first thing to note is that because $\star \de z = -\I \de z$ and $\star \de \zbar = \I \de \zbar$, then the third equation in \eqref{eq:three} is
\begin{eqnarray} \label{eq:thirdequation}
0 &=&  \de_A \star \dot{A} - [\dot{\Phi} - \dot{\Phi}^{\dagger_{h_0}}, \star (\Phi - \Phi^{\dagger_{h_0}})]\\ \nonumber 
&=& \de_A \left(-\I \dot A^{1,0} + \I \dot A^{0,1} \right) +
\I [\dot{\Phi}, \Phi^{\dagger_{h_0}}]
-\I [\dot{\Phi}^{\dagger_{h_0}}, \Phi ]\\  \nonumber
&=& \I \left(\del^{h_0}_A \dot{A}^{0,1} - \delbar_A \dot{A}^{1,0} +
[\dot{\Phi}, \Phi^{\dagger_{h_0}}]
+ [\dot{\Phi}^{\dagger_{h_0}}, \Phi ]\right).
\end{eqnarray}

We compute the real and imaginary parts of \eqref{eq:ingauge},
where $2 \mathrm{Re}(X \de z \wedge \de \zbar) := (X + X^{\dagger_{h_0}}) \de z \wedge \de \zbar$. We extend the adjoint operator $\dagger_{h_0}$ to $2$-forms by  $(X \de z \wedge \de \zbar)^{\dagger_{h_0}} = X^{\dagger_{h_0}} \de \zbar \wedge \de z$.  Computing the real part of \eqref{eq:ingauge},
\begin{eqnarray}
 2\mathrm{Re}\left( \del_A^{h_0} 
  \dot{A}^{0,1} + [\Phi^{\dagger_{h_0}}, \dot{\Phi}] \right)&=&   \del_A^{h_0} 
  \dot{A}^{0,1} - (\del_A^{h_0} 
  \dot{A}^{0,1})^{\dagger_{h_0}} + [\Phi^{\dagger_{h_0}}, \dot{\Phi}] - [\Phi^{\dagger_{h_0}}, \dot{\Phi}]^{\dagger_{h_0}} \\ \nonumber
  &=&   \del_A^{h_0} 
  \dot{A}^{0,1} + \delbar_A 
  \dot{A}^{1,0} + [\dot{\Phi},\Phi^{\dagger_{h_0}}] + [\dot{\Phi}^{\dagger_{h_0}}, \Phi] 
  \\ \nonumber
  &=&  \de_A \dot{A} +[\Phi, \dot{\Phi}^{\dagger_{h_0}}] + [\dot{\Phi}, \Phi^{\dagger_{h_0}}]
\end{eqnarray}
Observe that the real part is equal to the second equation in \eqref{eq:three}.
Now computing the imaginary part of \eqref{eq:ingauge},
\begin{eqnarray}
2\I\mathrm{Im}\left( \del_A^{h_0} 
  \dot{A}^{0,1} + [\Phi^{\dagger_{h_0}}, \dot{\Phi}] \right)&=&   \del_A^{h_0} 
  \dot{A}^{0,1} + (\del_A^{h_0} 
  \dot{A}^{0,1})^{\dagger_{h_0}} + [\Phi^{\dagger_{h_0}}, \dot{\Phi}] + [\Phi^{\dagger_{h_0}}, \dot{\Phi}]^{\dagger_{h_0}} \\ \nonumber
  &=&   \del_A^{h_0} 
  \dot{A}^{0,1} - \delbar_A 
  \dot{A}^{1,0} + [\dot{\Phi},\Phi^{\dagger_{h_0}}] - [\dot{\Phi}^{\dagger_{h_0}}, \Phi] \\ \nonumber
  &\overset{\eqref{eq:thirdequation}}=&-\I \left(\de_A \star \dot{A} - [\dot{\Phi} - \dot{\Phi}^{\dagger_{h_0}}, \star (\Phi - \Phi^{\dagger_{h_0}})] \right).
\end{eqnarray}
Observe that the imaginary part is equal to the third equation in \eqref{eq:three}.
\end{proof}

\begin{rem}
Given a Higgs bundle deformation $(\dot{\eta}, \dot{\varphi})$, Proposition 
\ref{prop:coulombgauge} can also be viewed as a single equation for the $\mathfrak{sl}(E)$-valued deformation $\dot{\nu}$:
\begin{equation}\label{eq:ingaugetriple}
 \del_E^h \delbar_E \dot{\nu} - \del_E^h \dot{\eta} - \left[\varphi^{\dagger_h}, \dot{\varphi} + [\dot{\nu}, \varphi]\right]=0.
\end{equation}
This generalizes the observation in \cite[Eq. 4.14]{DumasNeitzke} 
 that for the $SU(2)$-Hitchin section, it is best to work with a single complex function $F$, which they call the ``complex variation.'' 
 In the usual local holomorphic frame for the Hitchin section
\begin{equation}
 \delbar_E =\delbar \qquad \varphi = \begin{pmatrix}0 & P \\ 1 & 0 \end{pmatrix} \de z, 
\end{equation}
these are locally related by $\dot{\nu} = -\frac{1}{2} F \begin{pmatrix}1 & 0 \\ 0 & -1 \end{pmatrix}$.
This metric variation $\dot{\nu}$ is essentially uniquely determined from $(\dot{\eta}, \dot{\varphi})$,
as we show in Corollary \ref{cor:dirichlet}.
\end{rem}

\begin{cor}\label{cor:dirichlet} 
Let $\Omega$ be a compact 
connected region with possibly-empty smooth boundary.
Given $(\dot{\eta}, \dot{\varphi})$ and Dirichlet data $\dot{\nu}_{\mathrm{fixed}}$ on $\del \Omega$, the deformation $\dot{\nu}$ solving the  Dirichlet problem in $\Omega$ for the induced PDE
\begin{eqnarray} \label{eq:Dirichletnu}
\del_E^h \delbar_E \dot{\nu} - \del_E^h \dot{\eta} - \left[\varphi^{\dagger_h}, \dot{\varphi} + [\dot{\nu}, \varphi]\right]&=&0\\ \nonumber 
 \dot{\nu} &=&  \dot{\nu}_{\mathrm{fixed}} \quad \mbox{in $\del\Omega$}
\end{eqnarray}
is unique in the Sobolev space $H^1_0(\Omega, \mathfrak{sl}(E))$ up to the addition of $\dot{\xi}$ satisfying
\begin{equation}\label{eq:vconditions}
 \delbar_E \dot{\xi} =0, \qquad [\varphi, \dot{\xi}]=0, \qquad \left.\dot \xi\right|_{\del \Omega}=0.
\end{equation}

\end{cor}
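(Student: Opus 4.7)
The statement is a uniqueness-modulo-kernel result, so the plan is to reduce to the homogeneous problem and show the kernel is exactly \eqref{eq:vconditions}. Let $\dot{\nu}_1$ and $\dot{\nu}_2$ both solve the Dirichlet problem with the same data, and set $\dot{\xi} := \dot{\nu}_2 - \dot{\nu}_1 \in H^1_0(\Omega, \mathfrak{sl}(E))$. Linearity in $\dot{\nu}$ of the PDE in \eqref{eq:Dirichletnu} forces $\dot{\xi}$ to satisfy the homogeneous equation
\begin{equation}\label{eq:homog}
 \del_E^h \delbar_E \dot{\xi} - [\varphi^{\dagger_h}, [\dot{\xi}, \varphi]] = 0, \qquad \dot{\xi}\bigr|_{\partial \Omega} = 0.
\end{equation}
The Dirichlet condition already gives the third requirement in \eqref{eq:vconditions}, so it remains to show $\delbar_E \dot{\xi} = 0$ and $[\varphi, \dot{\xi}] = 0$.

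The strategy is the standard energy argument: pair \eqref{eq:homog} against $\dot{\xi}$ using the pointwise trace pairing and integrate over $\Omega$. For the second-order term, Stokes' theorem together with the Chern-connection compatibility $\del_E^h(\dot{\xi}^{\dagger_h}) = (\delbar_E \dot{\xi})^{\dagger_h}$ gives
\begin{equation}
 \int_\Omega \Tr\bigl(\dot{\xi}^{\dagger_h}\, \del_E^h \delbar_E \dot{\xi}\bigr) = \int_{\partial \Omega} \Tr\bigl(\dot{\xi}^{\dagger_h}\, \delbar_E \dot{\xi}\bigr) - \int_\Omega \Tr\bigl((\delbar_E \dot{\xi})^{\dagger_h} \wedge \delbar_E \dot{\xi}\bigr),
\end{equation}
whose boundary term vanishes because $\dot{\xi}|_{\partial \Omega} = 0$ and whose bulk term is (up to the conventional positive constant from \eqref{eq:metric1}) $-\|\delbar_E \dot{\xi}\|_{L^2(\Omega)}^2$. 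For the commutator term, cyclicity $\Tr(A[B,C]) = \Tr([A,B]C)$ together with the identity $[A^{\dagger_h}, B^{\dagger_h}] = -[A,B]^{\dagger_h}$ converts $\int_\Omega \Tr(\dot{\xi}^{\dagger_h}[\varphi^{\dagger_h},[\dot{\xi},\varphi]])$ into $\|[\dot{\xi},\varphi]\|_{L^2(\Omega)}^2$ up to the same positive constant. Combining these identities turns \eqref{eq:homog} into
\begin{equation}
 \|\delbar_E \dot{\xi}\|_{L^2(\Omega)}^2 + \|[\dot{\xi},\varphi]\|_{L^2(\Omega)}^2 = 0,
\end{equation}
forcing $\delbar_E \dot{\xi} = 0$ and $[\varphi, \dot{\xi}] = 0$ almost everywhere, which are exactly the remaining two conditions in \eqref{eq:vconditions}.

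A conceptually cleaner way to package the argument, and the one I would use to avoid bookkeeping of sign conventions, is to recognize \eqref{eq:Dirichletnu} as the Euler--Lagrange equation of the convex quadratic functional
\begin{equation}
 J(\dot{\nu}) := \|\delbar_E \dot{\nu} - \dot{\eta}\|^2_{L^2(\Omega)} + \|\dot{\varphi} + [\dot{\nu},\varphi]\|^2_{L^2(\Omega)},
\end{equation}
which is precisely the quadratic form inside \eqref{eq:highlight1}. The second variation of $J$ at any $\dot{\nu}$ in the direction $\dot{\xi}$ is $2\|\delbar_E \dot{\xi}\|^2 + 2\|[\dot{\xi},\varphi]\|^2$, and its kernel on $H^1_0(\Omega,\mathfrak{sl}(E))$ is exactly the set described in \eqref{eq:vconditions}. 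The main technical point — and the only place care is needed — is verifying that the formal integration by parts is legitimate for the weak $H^1_0$ formulation, which follows from density of smooth compactly supported sections in $H^1_0(\Omega, \mathfrak{sl}(E))$ and the fact that both integrands in $J$ are continuous bilinear forms on $H^1$.
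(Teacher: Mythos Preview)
Your proof is correct and follows essentially the same approach as the paper: take the difference of two solutions, pair the homogeneous equation against $\dot{\xi}$, integrate by parts using the vanishing boundary condition, and conclude from positivity of the resulting energy $\|\delbar_E \dot{\xi}\|^2 + \|[\varphi,\dot{\xi}]\|^2$. The variational reformulation you add at the end is a nice repackaging but is not a genuinely different argument.
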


\begin{proof}[Proof of Corollary \ref{cor:dirichlet}]

 Suppose $\dot{\nu}_1$ and $\dot{\nu}_2$ both solve \eqref{eq:Dirichletnu}.
 Then $\dot{\xi} = \dot{\nu}_2-\dot{\nu}_1$ solves
\begin{eqnarray} 
\del_E^h \delbar_E \dot{\xi}  + \left[\varphi^{\dagger_h}, [\varphi, \dot{\xi}]\right]&=&0\\ \nonumber 
\left.\dot \xi\right|_{\del \Omega}&=&0.
\end{eqnarray}
Taking the inner product with $\dot\xi$, and integrating by parts (using the fact that $\dot\xi=0$ on $\del \Omega$),
\begin{eqnarray} \label{eq:ibp8}
0&=& \int_C \IP{\del_E^h \delbar_E \dot{\xi}, \dot\xi}_{h} +\IP{\left[\varphi^{\dagger_h}, [\varphi, \dot{\xi}]\right], \dot\xi}_{h_0}\\ \nonumber
&=& \int_C \norm{\delbar_E \dot\xi}^2_{h} + \norm{[\varphi, \dot\xi]}^2_{h}.
\end{eqnarray}
Therefore,
\begin{equation} \label{eq:nufreedom}
 \delbar_E \dot\xi =0 \qquad [\varphi, \dot\xi]=0,
\end{equation}
as claimed.
\end{proof}

\subsection{Expression for the hyperk\"ahler metric} \label{sec:hk}
Consequently, we can give a clean expression for Hitchin's hyperk\"ahler metric purely
in terms of $(\dot{\eta}, \dot{\varphi}, \dot{\nu})$.  Notice that the fixed hermitian metric 
$h_0$ from the unitary formulation of Hitchin's equations no longer appears!

\begin{prop} \label{prop:metricdiff}
Fix $(\dot{\eta}, \dot{\varphi}) \in T_{(\delbar_E, \varphi)} \cM'$. Let $\dot{\nu}$ be the associated metric deformation of $h$ solving the Coulomb gauge condition in \eqref{eq:ingaugetriple}.
Then 
Hitchin's hyperk\"ahler metric $g_{L^2}$ is 
\begin{equation} \label{eq:gL2triple}
 \norm{(\dot{\eta}, \dot{\varphi}, \dot{\nu})}_{g_{L^2}}^2
   = 2\int_C \IP{ \dot{\eta} - \delbar_E \dot{\nu},  \dot{\eta}}_h  +
 \IP{\dot{\varphi} +[\dot{\nu}, \varphi], \dot{\varphi}}_h.
\end{equation}
\end{prop}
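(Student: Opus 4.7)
The strategy is to pass to the unitary picture where \eqref{eq:metric1} applies, transfer the $h_0$-pairings back to $h$-pairings via conjugation by $H^{1/2}$, and finally cancel the resulting cross-terms by integration by parts against the Coulomb gauge PDE \eqref{eq:ingaugetriple}. The Coulomb gauge hypothesis on $\dot\nu$ is used twice: once at the start to guarantee that the unitary representative of $(\dot\eta,\dot\varphi,\dot\nu)$ minimizes the $g$-norm in \eqref{eq:metric1} over its unitary gauge orbit, and once at the end to produce the desired cancellation.

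I would first use the transformation law \eqref{eq:defun} to record $\dot A^{0,1}=H^{1/2}(\dot\eta-\delbar_E\dot\nu)H^{-1/2}$ and $\dot\Phi=H^{1/2}(\dot\varphi+[\dot\nu,\varphi])H^{-1/2}$. By Proposition \ref{prop:coulombgauge}, the hypothesis that $\dot\nu$ satisfies \eqref{eq:ingaugetriple} is equivalent to $(\dot A^{0,1},\dot\Phi)$ lying in Coulomb gauge, so \eqref{eq:gL2} identifies $\|(\dot\eta,\dot\varphi,\dot\nu)\|_{g_{L^2}}^2$ with $g((\dot A^{0,1},\dot\Phi),(\dot A^{0,1},\dot\Phi))$ from \eqref{eq:metric1}. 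Next, I would convert the $h_0$-pairings to $h$-pairings: the relation $h(\cdot,\cdot)=h_0(H\cdot,\cdot)$ yields $X^{\dagger_{h_0}}=HX^{\dagger_h}H^{-1}$, hence $(H^{1/2}XH^{-1/2})^{\dagger_{h_0}}=H^{1/2}X^{\dagger_h}H^{-1/2}$, and cyclicity of the trace then cancels the $H^{\pm 1/2}$ factors inside each pointwise pairing. This produces the ``full-square'' identity
\[
\|(\dot\eta,\dot\varphi,\dot\nu)\|_{g_{L^2}}^2 \;=\; 2\int_C \|\dot\eta-\delbar_E\dot\nu\|_h^2+\|\dot\varphi+[\dot\nu,\varphi]\|_h^2.
\]

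Finally, to pass from this symmetric expression to the asymmetric form \eqref{eq:gL2triple}, I would expand $\|X-Y\|^2=\langle X-Y,X\rangle-\langle X-Y,Y\rangle$ and $\|X+Z\|^2=\langle X+Z,X\rangle+\langle X+Z,Z\rangle$, reducing the claim to the vanishing of the surplus
\[
2\int_C \bigl[-\langle \dot\eta-\delbar_E\dot\nu,\,\delbar_E\dot\nu\rangle_h+\langle \dot\varphi+[\dot\nu,\varphi],\,[\dot\nu,\varphi]\rangle_h\bigr].
\]
The first summand integrates by parts against $\delbar_E$ (with no boundary term since $C$ is closed) to yield $\int_C \langle \del_E^h(\dot\eta-\delbar_E\dot\nu),\dot\nu\rangle_h$, and the second uses the ad-adjoint identity $\langle[\dot\nu,\varphi],Y\rangle_h=\langle\dot\nu,[\varphi^{\dagger_h},Y]\rangle_h$ (up to sign) to yield $\int_C \langle[\varphi^{\dagger_h},\dot\varphi+[\dot\nu,\varphi]],\dot\nu\rangle_h$. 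Their sum pairs $\dot\nu$ with precisely the left-hand side of \eqref{eq:ingaugetriple} (up to an overall sign), which vanishes by hypothesis. The main obstacle is sign bookkeeping—the $\bar\star$-convention in $\langle\cdot,\cdot\rangle_h$, the $\delbar_E\leftrightarrow\del_E^h$ adjointness sign, and the ad-adjoint sign must all align to match \eqref{eq:ingaugetriple} rather than produce a sign-flipped variant. Because \eqref{eq:ingaugetriple} was itself obtained in the proof of Proposition \ref{prop:coulombgauge} by precisely the reverse translation, this consistency is automatic.
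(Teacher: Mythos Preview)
Your proposal is correct and follows essentially the same argument as the paper's proof: pass to the unitary representative via \eqref{eq:defun}, convert $h_0$-pairings to $h$-pairings by the $H^{1/2}$-conjugation/trace trick, and then integrate by parts against the Coulomb gauge equation \eqref{eq:ingaugetriple} to eliminate the cross-terms. The only cosmetic difference is that the paper cites the unitary form \eqref{eq:ingauge} of the Coulomb gauge condition at the final step rather than \eqref{eq:ingaugetriple}, and explicitly remarks that the integration by parts is justified because $h$ (hence the integrand) is smooth on the closed surface $C$.
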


\begin{proof}[Proof of Proposition \ref{prop:metricdiff}]
Given an infinitesimal deformation $(\dot{\eta}, \dot{\varphi}, \dot{\nu})$ solving \eqref{eq:Dirichletnu}, let  $(\dot{A}^{0,1}, \dot{\Phi})$ be the associated pair in unitary gauge solving the Coulomb gauge condition in \eqref{eq:ingauge}. Then, 
\begin{eqnarray} \label{eq:gL2comp}
\norm{(\dot{A}^{0,1}, \dot{\Phi})}^2_{g_{L^2}}&=&2\int_C \|\dot{A}^{0,1} \|^2_{h_0} + \|\dot{\Phi}\|^2_{h_0}\\ \nonumber 
&\overset{\eqref{eq:defun}}{=}&2\int_C \|H^{1/2} \left( \dot{\eta} - \delbar_E \dot{\nu} \right) H^{-1/2} \|^2_{h_0} + \|H^{1/2} \left( \dot{\varphi} +[\dot{\nu}, \varphi] \right) H^{-1/2}\|^2_{h_0}\\ \nonumber 
&=&2\int_C \|\dot{\eta} - \delbar_E \dot{\nu}\|^2_{h} + \|\dot{\varphi} +[\dot{\nu}, \varphi] \|^2_{h}\\ \nonumber 
  &\overset{IBP}{=}& 2\int_C  \IP{\dot{\eta} - \delbar_E \dot{\nu},\dot{\eta} }_{h}  -\IP{\del_E^{h}(\delbar_E \dot{\nu} -\dot{\eta}) - \left[\varphi^{\dagger_h}, \dot{\varphi} + [\dot{\nu}, \varphi]\right], \dot{\nu}}_h  \\ \nonumber
  & & +
  \IP{ \dot{\varphi} + [\dot{\nu}, \varphi], \dot{\varphi}}_{h}\\ \nonumber 
   &\overset{\eqref{eq:ingauge}}{=}&
     \int_C \IP{\dot{\eta} - \delbar_E \dot{\nu},  \dot{\eta} }_h  +
 \IP{ \dot{\varphi} +[\dot{\nu}, \varphi], \dot{\varphi}}_h.
\end{eqnarray}
In the integration-by-parts step we use the fact that the integrand $\IP{\dot{\eta}-\delbar_E \dot{\nu}, \dot{\nu}}_h$ is smooth, since $h$ is smooth on $C$.
\end{proof}

\begin{rem}
Given two variations $(\dot{\eta}_1, \dot{\varphi}_1, \dot{\nu}_1)$ and $(\dot{\eta}_2, \dot{\varphi}_2, \dot{\nu}_2)$ solving the Coulomb gauge condition in \eqref{eq:ingaugetriple}, then---as expected---
 \begin{eqnarray} \nonumber
\IP{(\dot{\eta}_1, \dot{\varphi}_1, \dot{\nu}_{1}) \;, \; 
(\dot{\eta}_2, \dot{\varphi}_2, \dot{\nu}_{2})}_{g_{L^2}}
&=&2 \mathrm{Re}
     \int_C \IP{\dot{\eta}_1 - \delbar_E \dot{\nu}_{1},  \dot{\eta}_2 }_{h}  +
 \IP{ \dot{\varphi}_1 +[\dot{\nu}_{1}, \varphi], \dot{\varphi}_2}_{h}.
 \end{eqnarray} 
\end{rem}

\subsection{The tangent space and metric on \texorpdfstring{$\cM_\infty'$ and $\cM_{\mathrm{app}}'$}{M}} \label{sec:othertangentspaces}
The formulation of the tangent space to $\cM'$ in \S\ref{sec:ingauge} can be extended to the 
related moduli spaces $\cM'_\infty$ and $\cM'_{\mathrm{app}}$.
The moduli space $\cM_\infty'$, introduced in \S\ref{sec:sfintro}, consists of triples $(\delbar_E, \varphi, h_\infty)$ up to gauge equivalence. The hermitian
metric $h_\infty$ solves the decoupled Hitchin's equations 
\begin{equation} \label{eq:decoupled}
 F_{D(\delbar_E, h_\infty)} =0,  \qquad [\varphi, \varphi^{\dagger_{h_\infty}}]=0.
\end{equation}
 A Higgs bundle deformation $(\dot{\eta}, \dot{\varphi})$ gives rise to a metric variation $\dot{\nu}_\infty$ such that 
\begin{eqnarray} \label{eq:defholinf} 
 (h_\infty)_\eps(w_1,w_2) &=& h_\infty( \e^{\eps \dot \nu_\infty} w_1,\e^{\eps \dot \nu_\infty} w_2).
\end{eqnarray}
In the unitary formulation of Hitchin's equations,
the variation $(\dot{\eta}, \dot{\varphi}, \dot{\nu}_\infty)$
corresponds to the deformation
\begin{equation} \label{eq:defuninf}
\dot{\Phi}_\infty = H_\infty^{1/2} \left( \dot{\varphi} +[\dot{\nu}_\infty, \varphi] \right)  H_\infty^{-1/2}
\qquad \dot{A}_\infty^{0,1} =H^{1/2}_\infty \left( \dot{\eta}  
- \delbar_{E_\infty} \dot{\nu}_\infty \right) H^{-1/2}_\infty, 
\end{equation}
where  $H_\infty$ is $\End E$-valued $h_0$-hermitian section such that $h_\infty(w_1,w_2)=h_0(H_\infty w_1, w_2)$.
Analogously to Proposition \ref{prop:coulombgauge},
given an infinitesimal Higgs bundle deformation $(\dot{\eta}, \dot{\varphi})$,
$\dot{\nu}_\infty$ solves the infinitesimal version of the decoupled Hitchin's equations
and is in formal Coulomb gauge if, and only if,
\begin{equation} \label{eq:7}
 \del_E^{h_\infty} \delbar_E \dot{\nu}_\infty - \del_E^{h_\infty} \dot{\eta} - \left[\varphi^{\dagger_{h_\infty}}, \dot{\varphi} + [\dot{\nu}_\infty, \varphi]\right]=0 
\end{equation}
and the infinitesimal version of $[\varphi, \varphi^{\dagger_{h_\infty}}]=0$ is satisfied:
\begin{equation}\label{eq:infinitesimaldecoupled}
 0=[\dot{\varphi}, \varphi^{\dagger_{h_\infty}}] + [\varphi, \dot{\varphi}^{\dagger_{h_\infty}}] + 
 \left[\varphi, 
 [\varphi^{\dagger_{h_\infty}}, \dot{\nu}_\infty + \dot{\nu}_\infty^{\dagger_{h_\infty}} ]
 \right].
\end{equation}
(Equivalently, the pair $(\dot{A}_\infty^{0,1}, \dot{\Phi}_\infty)$ solves 
\begin{eqnarray} \label{eq:ingaugeinf}
  0 &=& \del_{A_\infty}^{h_0} \dot{A}_\infty^{0,1} +  [\Phi_\infty^{\dagger_{h_0}}, \dot{\Phi}_\infty],
  \end{eqnarray}
  and 
  the infinitesimal version of $[\Phi_\infty, \Phi_\infty^{\dagger_{h_0}}]=0$.)

\bigskip

The moduli space $\cM_{\app}'$ is just the ordinary Higgs bundle moduli space of pairs $(\delbar_E, \varphi)$ upgraded to include the data of a hermitian metric $h^\app$ that is an  approximate solution of Hitchin's equations, as constructed in \cite{MSWW14, FredricksonSLn}.
The tangent space of $\cM_{\app}'$ consists of Higgs bundle deformations $(\dot{\eta}, \dot{\varphi})$ upgraded to contain
an infinitesimal deformation $\dot{\nu}^\app$ that 
does not infinitesimally change the (non-zero) value of $F_{D(\delbar_E, h^\app)} + [\varphi, \varphi^{\dagger_{h^\app}}]$. Thus, similarly, we can combine this with the formal Coulomb gauge condition into a single equation:
\begin{equation} \label{eq:7app}
 \del_E^{h_\app} \delbar_E \dot{\nu}_\app - \del_E^{h_\app} \dot{\eta} - \left[\varphi^{\dagger_{h_\app}}, \dot{\varphi} + [\dot{\nu}_\app, \varphi]\right]=0.
\end{equation}
This deformation corresponds to a pair 
$(\dot{A}^{0,1}_\app, \dot{\Phi}_\app) \in T_{(\delbar_E, \varphi, h^\app)} \cM$ 
solving 
\begin{eqnarray} \label{eq:ingaugeapp}
  0 &=& \del_{A_\app}^{h_0} \dot{A}_\app^{0,1} +  [\Phi_\app^{\dagger_{h_0}}, \dot{\Phi}_\app].
 \end{eqnarray}

\begin{cor} \label{cor:metricdiff}
Fix $(\dot{\eta}, \dot{\varphi}) \in T_{(\delbar_E, \varphi)} \cM'$. Let $\dot{\nu}_\app$ be the associated metric deformation of $h_{\app}$ solving the formal Coulomb gauge condition in \eqref{eq:7app}. 
Then the (non-hyperk\"ahler) approximate $L^2$-metric $g_{\app}$ on $\cM'_{\mathrm{app}}$ is 
\begin{equation} \label{eq:gapp}
 \norm{(\dot{\eta}, \dot{\varphi}, \dot{\nu}_\app)}_{g_{\app}}^2=2
 \int_C \IP{ \dot{\eta} - \delbar_E \dot{\nu}_\app,  \dot{\eta}}_{h_\app}  +
 \IP{\dot{\varphi} +[\dot{\nu}, \varphi], \dot{\varphi}}_{h_\app}.
\end{equation}
\end{cor}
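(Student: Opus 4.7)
The plan is to run essentially the same computation used to establish Proposition \ref{prop:metricdiff}, but with $h$ replaced by $h_\app$, the pair $(\dot{A}^{0,1}, \dot{\Phi})$ replaced by $(\dot{A}^{0,1}_\app, \dot{\Phi}_\app)$, and the Coulomb gauge identity \eqref{eq:ingaugetriple} replaced by its approximate analog \eqref{eq:7app}. The moduli space $\cM'_{\app}$ carries the $L^2$-metric inherited from the unitary formulation, so by construction
\[
\norm{(\dot{A}^{0,1}_\app, \dot{\Phi}_\app)}^2_{g_{\app}} = 2\int_C \|\dot{A}^{0,1}_\app\|^2_{h_0} + \|\dot{\Phi}_\app\|^2_{h_0}.
\]
First I would convert this to an expression in the Higgs bundle variables $(\dot{\eta}, \dot{\varphi}, \dot{\nu}_\app)$ using the approximate analog of \eqref{eq:defun}, namely $\dot{A}^{0,1}_\app = H_\app^{1/2}(\dot{\eta} - \delbar_E \dot{\nu}_\app) H_\app^{-1/2}$ and $\dot{\Phi}_\app = H_\app^{1/2}(\dot{\varphi} + [\dot{\nu}_\app, \varphi]) H_\app^{-1/2}$, where $H_\app$ is the $h_0$-Hermitian endomorphism with $h_\app(w_1,w_2) = h_0(H_\app w_1, w_2)$. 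Conjugation invariance of the trace pairing then yields
\[
\norm{(\dot{A}^{0,1}_\app, \dot{\Phi}_\app)}^2_{g_{\app}} = 2\int_C \|\dot{\eta} - \delbar_E \dot{\nu}_\app\|^2_{h_\app} + \|\dot{\varphi} + [\dot{\nu}_\app, \varphi]\|^2_{h_\app}.
\]

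Next I would separate the terms involving $\dot{\nu}_\app$ from the rest and integrate by parts. Writing
\[
\|\dot{\eta} - \delbar_E \dot{\nu}_\app\|^2_{h_\app} = \IP{\dot{\eta} - \delbar_E \dot{\nu}_\app, \dot{\eta}}_{h_\app} - \IP{\dot{\eta} - \delbar_E \dot{\nu}_\app, \delbar_E \dot{\nu}_\app}_{h_\app},
\]
and similarly expanding $\|\dot{\varphi} + [\dot{\nu}_\app, \varphi]\|^2_{h_\app}$, I would push the derivative off $\dot{\nu}_\app$ in the second pieces. Since $h_\app$ is smooth on $C$ (the interpolation in \eqref{eq:htapprox} via the cutoff $\chi$ yields a smooth hermitian metric) and $\dot{\nu}_\app$ is a smooth $\mathfrak{sl}(E)$-valued section, there are no boundary terms, and integration by parts produces exactly the combination
\[
\int_C \IP{\del^{h_\app}_E(\delbar_E \dot{\nu}_\app - \dot{\eta}) - [\varphi^{\dagger_{h_\app}}, \dot{\varphi} + [\dot{\nu}_\app, \varphi]], \dot{\nu}_\app}_{h_\app}
\]
paired with $\dot{\nu}_\app$. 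By the approximate Coulomb gauge equation \eqref{eq:7app}, this integrand vanishes identically, leaving precisely
\[
\norm{(\dot{\eta}, \dot{\varphi}, \dot{\nu}_\app)}^2_{g_{\app}} = 2 \int_C \IP{\dot{\eta} - \delbar_E \dot{\nu}_\app, \dot{\eta}}_{h_\app} + \IP{\dot{\varphi} + [\dot{\nu}_\app, \varphi], \dot{\varphi}}_{h_\app},
\]
as claimed.

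Since the approximate metric $h_\app$ satisfies Hitchin's equations away from the collar annuli around each zero of $\det\varphi$ but not on them, one might worry that additional error terms appear when $F_{D(\delbar_E, h_\app)} + [\varphi, \varphi^{\dagger_{h_\app}}] \neq 0$. However, the derivation above never uses the zero-curvature equation: the only inputs are \eqref{eq:defun} (purely algebraic in $H_\app$), smoothness of $h_\app$ to legitimize the integration by parts, and the approximate Coulomb condition \eqref{eq:7app} which holds by definition of $\dot{\nu}_\app$. The one step worth verifying carefully is that the integration by parts genuinely produces the operator $\del^{h_\app}_E \delbar_E \dot{\nu}_\app - \del^{h_\app}_E \dot{\eta} - [\varphi^{\dagger_{h_\app}}, \dot{\varphi} + [\dot{\nu}_\app, \varphi]]$ and not a version with different cross-terms; this is a straightforward calculation identical to that in the proof of Proposition \ref{prop:metricdiff}, relying only on the compatibility of $\del^{h_\app}_E$ with the $h_\app$-pairing and on the $h_\app$-self-adjointness of $\ad_\varphi^{\dagger_{h_\app}}$.
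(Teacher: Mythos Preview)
Your proposal is correct and follows exactly the paper's approach: the paper's proof simply states that the argument of Proposition \ref{prop:metricdiff} carries over verbatim because $h^{\app}$ is smooth on $C$, which is precisely what you spell out in detail. Your observation that the zero-curvature equation is never used---only smoothness of $h_\app$ for the integration by parts and the Coulomb gauge condition \eqref{eq:7app}---is the essential point.
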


\begin{proof}[Proof of Corollary \ref{cor:metricdiff}]
The proof for variations of $h^{\app}$ is the same as the proof of Proposition \ref{prop:metricdiff} because $h^{\app}$ is smooth on $C$.
\end{proof}

\begin{rem}
 We delay the proof of the analogous fact for $h_\infty$ until Theorem \ref{thm:semiflatisL2} since the integration-by-parts step can \emph{a priori} introduce boundary terms. This is because $h_\infty$ is singular at the branch locus $Z \subset C$,
 and consequently $\dot{\nu}_\infty$ may have singularities at the branch locus.
\end{rem}

\subsection{Semiflat metric for \texorpdfstring{$\cM'_{SU(n)}$}{M'}} \label{sec:semi}
In \cite{MSWW14}, Mazzeo-Swoboda-Weiss-Witt prove that the hyperk\"ahler semiflat metric $g_{\semif}$ on $\cM'$ is equal to the $L^2$-metric on $\cM'_\infty$ (denoted $g_{L^2(\cM'_\infty)}$).
Using 
the analogy of the clean expression for the hyperk\"ahler metric in \eqref{eq:ingaugetriple} for $g_{L^2}(\cM'_\infty)$, we give a proof that $g_{\semif}=g_{L^2}(\cM'_\infty)$ that works for \emph{any} $SU(n)$.

\subsubsection{Characterization of the semiflat metric} \label{sec:characterizationofsemiflat}
The map $\mathrm{Hit}: \cM' \rightarrow \cB'$ induces a hyperk\"ahler metric $g_{\semif}$
on $\cM'$, as follows. Since the base $\cB'$ is special K\"ahler, its cotangent bundle $T^* \cB'$ carries a canonical hyperk\"ahler
 metric \cite[Theorem 2.1]{Freedsemiflat}. 
 This descends to the quotient $\cM'\simeq T^* \cB'/\Gamma$  where $\Gamma$ is the local system with fiber $\Gamma_b$ \cite[Theorem 3.8]{Freedsemiflat}.
Our initial exposition follows \cite{MSWW17}.

The special K\"ahler metric on $\cB'$ comes from its interpretation as a family of spectral curves $\{\Sigma_{b}\}_{b \in \cB'}$.  A special K\"ahler structure can conveniently be  described
in terms of special conjugate holomorphic coordinates.
Choose a symplectic basis $\{\alpha_i(b), \beta_i(b)\}_{i=1}^{N}$ of the integer lattice $\Gamma_b=H_1(\Sigma_b,\Z)_{\sigma}$ of rank $N$. 
Here $H_1(\Sigma_b, \Z)_\sigma$ is a sublattice of $H_1(\Sigma_b, \Z)$ accounting
for the fact that the $SL(n,\C)$-Higgs field is traceless.  In particular, 
\begin{equation}\label{eq:sigma}
H_1(\Sigma_b, \Z)_\sigma = \ker \left( \pi: H_1(\Sigma_b, \Z) \rightarrow H_1(C, \Z) \right).
\end{equation}
 Then, these special conjugate
holomorphic coordinates are $z_i(b) =\int_{\alpha_i(b)} \lambda, w_i(b) =\int_{\beta_i(b)} \lambda$, defined by integrating the tautological $1$-form $\lambda$ over cycles in $\Sigma_b \subset \mathrm{Tot}(K_C)$. 
The special K\"ahler metric\footnote{This special K\"ahler metric is not unique; we have the freedom to rescale it by a constant.} on $\cB'$ is given by
 \begin{equation}
  \omega_{\mathrm{sK}} = - \sum_j \de z_j \wedge \de \overline{w}_j + \de \zbar_j \wedge \de w_j,
 \end{equation}
 By \cite{DouadyHubbard, Hertling}\footnote{The statement $\nabla^{GM}_{\dot{b}} \lambda = \dot \tau$ appears in \cite[Proposition 8.2]{Hertling}, where $\nabla^{GM}$ is the Gauss-Manin connection.  
 }and the discussion around \cite[Eq. 2]{MSWW17},
 \begin{equation}
  \de z_i(\dot{b}) =  \int_{\alpha_i(b)} \dot \tau(\dot{b}) \qquad \de w_i(\dot{b}) =  \int_{\beta_i(b)} \dot \tau(\dot{b}), 
 \end{equation}
using the the isomorphism
\begin{eqnarray} \label{eq:tau}
 \dot \tau: T_b \mathcal{B}' \simeq \sum_{i=2}^n H^0(C, K_C^i) &\rightarrow& H^0(\Sigma_b, K_{\Sigma_b})_\sigma \\ \nonumber
\dot{b} &\mapsto& \dot \tau(\dot{b}).
\end{eqnarray}
For example, in the $SL(2,\C)$ case, $\dot\tau: \dot{q_2} \mapsto \frac{\dot{q_2}}{2 \sqrt{q_2}}$.
(This is because one root of the characteristic polynomial $\lambda_\eps^2 - (q_2 + \eps \dot{q_2})$ is $\lambda_\eps=\sqrt{q_2} + \eps \frac{\dot{q_2}}{2 \sqrt{q_2}} + O(\eps^2)$.)
Consequently, (e.g. see \cite[\S2.3]{MSWW17}),
\begin{equation} \label{eq:sK}
 \norm{\dot{b}}^2_{g_{\mathrm{sK}}} = 2 \int_{\Sigma_b} |\dot \tau(\dot{b})|^2.
\end{equation}
\begin{rem}
The special K\"ahler metric is only defined up to scale, and correspondingly
there is a $\R^+$-family of special K\"ahler metrics on $\cB'$---and consequently semiflat metrics on $\cM'$---parameterized by a constant $\kappa>0$. For $\kappa=1$, the special K\"ahler metric is given by the expression in \eqref{eq:sK}.
\end{rem}
\bigskip

We now turn to a description of the semiflat metric on $\cM'$, concluding with practical
characterization of $g_{\semif}$ in Proposition \ref{prop:sfcharacterization}.
Given a spectral curve $\Sigma_b \in \cM'$, the fiber is $\mathrm{Hit}^{-1}(\Sigma_b) \simeq \mathrm{Prym}(\Sigma_b)$.
Note that
 \begin{equation}\mathrm{Prym}(\Sigma_b) \simeq H^0(\Sigma_b, K_{\Sigma_b})^*_{\sigma}/\Gamma_b,
 \end{equation}
 where $H^0(\Sigma_b, K_{\Sigma_b})_\sigma$ is a certain
 subspace (see \eqref{eq:sigma}) of $H^0(\Sigma_b, K_{\Sigma_b})$ accounting
 for the fact that $\mathrm{Prym}(\Sigma_b)$ is a subvariety of $\mathrm{Jac}(\Sigma_b)$.
 Consequently there is a canonical identification between the tangent space  to $\mathrm{Prym}(\Sigma_b)$ and the complex vector space $H^0(\Sigma_b, K_{\Sigma_b})^*_\sigma \simeq T_{b}^* \mathcal{B}'$.
Then for $(\cE, \varphi) \in \cM'$ with $\mathrm{Hit}(\cE, \varphi)=b$,
we have 
\begin{equation} \label{eq:tangentiso}
T_{(\cE, \varphi)} \cM' \simeq T_{(\cE, \varphi)} T^*\mathcal{B} \simeq  T_{b}^* \mathcal{B}' \oplus T_{b} \mathcal{B}'.
\end{equation}
The hyperk\"ahler metric $g_{\semif}$ on the tangent space to $(\cE, \varphi)$ is then defined to be 
$g_{\mathrm{sK}}^{-1} \oplus g_{\mathrm{sK}}$ where $g_{\mathrm{sK}}$ is the special K\"ahler metric on the $\cB'$ and $g_{\mathrm{sK}}^{-1}$ is the dual metric on the torus fiber.
Because $T_b \cB' \simeq H^0(\Sigma_b, K_{\Sigma_b})_\sigma$, 
the isomorphism in \eqref{eq:tangentiso} is
\begin{eqnarray} \label{eq:dualitysf}
 T_{(\delbar_E, \varphi)} \cM' &\rightarrow&  H^0(\Sigma_b, K_{\Sigma_b})^*_{\sigma}
 \oplus H^0(\Sigma_b, K_{\Sigma_b})_\sigma.
\end{eqnarray}
To express the semiflat metric on $\cM'$ concretely, we identify $H^0(\Sigma_b, K_{\Sigma_b})_\sigma$ with the space of horizontal deformations, and $H^0(\Sigma_b, K_{\Sigma_b})^*_{\sigma}$ with the space of vertical deformations, which we now define.
Starting here, our exposition departs from the perspective in \cite{MSWW17} and 
instead emphasizes spectral data.

\smallskip

\noindent \emph{Infinitesimal deformations of spectral data.} 
Given a Higgs bundle $(\delbar_E, \varphi) \in \cM'$, let $\cL \rightarrow \Sigma_b$ be the associated spectral data. As each point of the spectral cover $\Sigma_b$ over $p \in C$ represent an eigenvalue of $\left.\varphi\right|_{p}$, there is a canonical map
\begin{align}\tau: \Sigma_b &\rightarrow \pi^*K_C
\end{align}
 which associates the canonical eigenvalue of $\left.\pi^* \varphi\right|_{\widetilde{p}}$ to the point $\widetilde{p}$.  
The same complex vector bundle $L$ over the topological spectral curve
$\Sigma_{\mathrm{top}}$ underlies all nearby Higgs bundles.  The map $\tau$ gives an embedding of $\Sigma_{\mathrm{top}}$ into $\mathrm{Tot}(K_C)$.
Consequently, the spectral data is equivalent to the pair $(\delbar_L, \tau)$, since $\delbar_L$ gives $L$ a holomorphic structure and $\tau$ gives $\Sigma \subset \mathrm{Tot}(K_C)$ a complex structure from the complex structure of the ambient space.  
An infinitesimal deformation of the Higgs field gives rise to an infinitesimal variation  $\tau + \eps \dot \tau$ of the tautological eigenvalue $\pi^* \lambda$ of the pulled-back Higgs field $\pi^*\varphi$ on the spectral curve.
While the deformation $\dot \tau$ depends only on $\varphi$ and $\dot{\varphi}$,
the map from $(\dot{\eta}, \dot{\varphi})$ to $\dot{\xi}$ is more complicated to describe\footnote{Given a Higgs bundle $(\delbar_E, \varphi)$, let $\cL \rightarrow \Sigma_b$ be the associated spectral data.
To see the difficulty in passing from $(\dot{\eta}, \dot{\varphi})$ to $\dot{\xi}$, note that
the holomorphic line bundle $\cL$ arises from $(\delbar_E, \varphi)$ as follows: the underlying complex line bundle $L \subset \pi^*E$ is the eigenline bundle of the tautological eigenvalue of $\pi^* \varphi$ acting on $\pi^*E \rightarrow \Sigma_b$. The holomorphic structure on $\cL=(L, \delbar_L)$ is from the holomorphic structure of the ambient holomorphic bundle $\pi^*\cE \rightarrow \Sigma_b$. Now, given a deformation $(\dot{\eta}, \dot{\varphi})$ note that the section $L_\eps \subset \pi^*E$ moves in the family $\varphi +\eps \dot{\varphi}$, consequently, we first need to identify $L_\eps$ with a common fixed complex line bundle $L$. 
We can avoid this in the case where $\dot{\varphi}=0$. In this case, the deformation $\dot{\xi}$ is given in terms of $\dot{\eta}$ by $\pi^*\dot{\eta} \cdot s = \dot{\xi} s$, where $\pi^*\dot{\eta}$ is locally a $(0,1)$-form valued in $\mathfrak{sl}(n,\C)$ acting by matrix multiplication on any section $s$ of $L$, while $\dot{\xi}$ is locally a $(0,1)$-form acting by scalar multiplication on $s$.}.
Conveniently for our purposes, the existence of a correspondence is more important than the formula itself. 
To any abelianized associated deformation $(\dot{\xi}, \dot \tau)$, there is a canonically associated hermitian metric deformation $\dot{\nu}_L$ satisfying 
\begin{equation}\label{eq:dogs}
\del_L^{h_L} \delbar_L \dot{\nu}_L - \del^{h_L}_L \dot{\xi} =0.
\end{equation}
 This is the analog of \eqref{eq:7}. Looking at \eqref{eq:dualitysf}, note that $\dot{\tau} \in H^0(\Sigma_b, K_{\Sigma_b})_\sigma$. Looking forward to the proof of Proposition \ref{prop:sfcharacterization}, we'll identify $H^0(\Sigma_b, K_{\Sigma_b})_\sigma^*$ with $H^{0,1}(\Sigma_b)_{\sigma}$, and observe that $\dot \xi - \delbar_L \dot{\nu}_L$ is the harmonic representative of the class $[\dot \xi] \in H^{0,1}(\Sigma_b)_{\sigma}$.

\smallskip
\noindent \emph{Vertical deformations.} 
Vertical deformations are infinitesimal deformations in the torus fiber.
We identify the vector space of characteristic polynomials $\mathrm{char}_\varphi(\lambda)=\lambda^n - q_2 \lambda^{n-2} - \cdots - q_n$ with the vector space of coefficients $\cB \ni b=(q_2, \cdots, q_n)$.

\begin{defn}
A Higgs bundle deformation $(\dot{\eta}, \dot{\varphi})$ of $(\delbar_E, \varphi)$
is \emph{vertical} if $\dot{b}=0$, i.e. if $\left.\frac{\de}{\de \eps} \right|_{\eps=0} \mathrm{char}_{\varphi + \eps \dot{\varphi}}(\lambda)=0$. 
\end{defn}

An infinitesimal deformation of the Higgs field gives rise to an infinitesimal variation  $\tau+ \eps \dot\tau$ of the tautological eigenvalue of $\pi^*(\varphi + \eps \dot{\varphi})$

\begin{lem}\label{lem:vertical}
Let $(\delbar_E, \varphi)$ be a Higgs bundle in $\cM$ with $b \neq 0$.
Then a deformation $(\dot{\eta}, \dot{\varphi})$ is vertical, i.e. $\dot{b}=0$,
if and only if $\dot{\tau}$.
\end{lem}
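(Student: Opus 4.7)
The plan is to reduce the statement to a linear-algebraic fact on the spectral cover, via the identity that defines $\tau$. By construction, the tautological eigenvalue $\tau$ on $\Sigma_b$ satisfies
\begin{equation*}
\tau^n - q_2\,\tau^{n-2} - q_3\,\tau^{n-3} - \cdots - q_n = 0 \quad \text{on } \Sigma_b.
\end{equation*}
First I would differentiate the analogous identity $\mathrm{char}_{\varphi + \eps \dot\varphi}(\tau + \eps \dot\tau) = 0$ in $\eps$ at $\eps = 0$. Writing $\dot b = (\dot q_2, \ldots, \dot q_n)$, this gives the pointwise relation on $\Sigma_b$
\begin{equation}\label{eq:verticalkey}
\mathrm{char}_\varphi'(\tau)\,\dot\tau \;=\; \dot q_2\,\tau^{n-2} + \dot q_3\,\tau^{n-3} + \cdots + \dot q_n,
\end{equation}
where $\mathrm{char}_\varphi'$ denotes the $\lambda$-derivative of $\mathrm{char}_\varphi$. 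Note that neither side depends on $\dot\eta$, so both conditions $\dot b = 0$ and $\dot\tau = 0$ are genuinely only about $\varphi$ and $\dot\varphi$.

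For the forward direction, if $\dot b = 0$ then the right-hand side of \eqref{eq:verticalkey} vanishes identically, so $\mathrm{char}_\varphi'(\tau)\,\dot\tau = 0$ on $\Sigma_b$. Because we are working in the regular locus, $\Sigma_b$ is smooth, equivalently $\mathrm{char}_\varphi'(\tau)$ is nowhere vanishing on $\Sigma_b$ (its zero set is precisely the ramification divisor of $\pi$). Therefore $\dot\tau = 0$ on $\Sigma_b$.

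For the converse, suppose $\dot\tau = 0$. Then \eqref{eq:verticalkey} reduces to
\begin{equation*}
\dot q_2(p)\,\tau^{n-2} + \dot q_3(p)\,\tau^{n-3} + \cdots + \dot q_n(p) = 0
\end{equation*}
evaluated at each of the $n$ points of $\pi^{-1}(p)$. Fix a point $p \in C$ away from the branch locus; the $n$ eigenvalues $\tau_1(p), \ldots, \tau_n(p)$ are distinct. The resulting system is governed by the $n \times (n-1)$ matrix with rows $(\tau_i^{n-2}, \tau_i^{n-3}, \ldots, 1)$. A nonzero element of its kernel would provide a polynomial in $\tau$ of degree at most $n-2$ with $n$ distinct roots, which is impossible. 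Hence $\dot q_j(p) = 0$ for every $j$ at every unbranched $p$, and since each $\dot q_j$ is a holomorphic section of $K_C^j$, we conclude $\dot b = 0$ on all of $C$.

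The only step with any subtlety is the Vandermonde-type rank argument for the converse; once one observes that the rows of the coefficient matrix are the last $n-1$ entries of standard Vandermonde rows in $n$ distinct variables, the conclusion is immediate. Everything else is a direct linearization of the defining equation of $\Sigma_b$, and the smoothness hypothesis (we are in $\cM'$, equivalently $b \in \cB'$) is used precisely to ensure $\mathrm{char}_\varphi'(\tau)$ is invertible on $\Sigma_b$ in the forward direction.
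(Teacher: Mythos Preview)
Your approach is essentially the paper's---both linearize the relation between the roots and coefficients of the characteristic polynomial---though you organize it via implicit differentiation of $\mathrm{char}_{\varphi_\eps}(\tau_\eps)=0$, whereas the paper differentiates the factorization $\prod_i(x-\lambda_i)$ directly and then argues linear independence of the polynomials $\{\prod_{k\neq i}(x-\lambda_k)\}_i$. A mild curiosity: the directions you and the paper treat as ``easy'' are swapped. The paper regards $\dot\tau=0\Rightarrow\dot b=0$ as immediate (coefficients are symmetric functions of roots) and works for the converse; you regard $\dot b=0\Rightarrow\dot\tau=0$ as the easy direction and invoke a Vandermonde argument for the converse.

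There is, however, a genuine slip in your forward direction. You assert that smoothness of $\Sigma_b$ is equivalent to $\mathrm{char}_\varphi'(\tau)$ being nowhere vanishing on $\Sigma_b$, and then parenthetically note that its zero set is precisely the ramification divisor of $\pi$---these two statements contradict each other, and the first is false. Smoothness of $\Sigma_b$ only says that $\mathrm{char}_\varphi$ and $\partial_\lambda\mathrm{char}_\varphi$ have no common zero \emph{in the total space of $K_C$}; restricted to $\Sigma_b$, the section $\mathrm{char}_\varphi'(\tau)$ does vanish, exactly along the ramification divisor $\widetilde Z$. The repair is immediate: $\dot\tau\in H^0(\Sigma_b,K_{\Sigma_b})$ is holomorphic, and the identity $\mathrm{char}_\varphi'(\tau)\,\dot\tau=0$ forces $\dot\tau=0$ on $\Sigma_b\setminus\widetilde Z$, hence everywhere by continuity. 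With that correction your argument goes through.
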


\begin{proof}
The proof of this lemma is a standard argument concerning the passage between the roots of a polynomial and the coefficients of a polynomial.
 Deform the roots of a polynomial of degree $n$ by $\lambda_i + \eps \dot{\lambda_i}$ for $i=1, \cdots, n$. 
 Then the corresponding deformation of the polynomial is
 \begin{equation}
  \prod_{i=1}^n (x-\lambda_i) - \eps \sum_{i=1}^n \dot{\lambda}_i \prod_{k\in\{1, \ldots, \widehat{i}, \ldots, n\}} (x-\lambda_k) + O(\eps^2)
 \end{equation}
It is obvious that if $\dot{\lambda}_i$ all vanish then so does the polynomial deformation.

We want to show that if $ \sum_{i=1}^n \dot{\lambda}_i \prod_{k\in\{1, \ldots, \widehat{i}, \ldots, n\}} (x-\lambda_k)=0$ then $\lambda_i$ must all vanish, i.e. we must show that the polynomials  
\begin{equation}\label{eq:polys}
\left\{\prod_{k\in\{1, \ldots, \widehat{i}, \ldots, n\}} (x-\lambda_k)\right\}_{i=1}^n
\end{equation}
 are linearly independent. Indeed, this is true if and only if $\{\lambda_1, \ldots, \lambda_n\}$ are all distinct. (To see this, define the maps  $\mathrm{ev}_{\lambda_i}$ given by evaluation at $\lambda_i$ in the dual space of the space of polynomials of degree $n-1$. Then $\{\mathrm{ev}_{\lambda_1}, \ldots, \mathrm{ev}_{\lambda_n}\}$ are linearly independent elements provided $\{\lambda_1, \ldots, \lambda_n\}$ are all distinct. 
Each $\mathrm{ev}_{\lambda_i}$ sends the $i^{th}$ polynomial in \eqref{eq:polys} to a non-zero number and sends every other polynomial to zero. Thus, the polynomials are linearly independent.)

Now, we view $\lambda_i$ and $\dot{\lambda}_i$ as holomorphic sections rather than constants. The variation of the characteristic polynomial $\dot{b}$ is also holomorphic, i.e. each of the coefficients $q_i$ is a holomorphic section. Consequently, provided two or more sheets of the spectral cover do not globally coincide, we can conclude from $\dot{b}=0$ that $\dot{\lambda}_i=0$ for $i=1, \cdots, n$, i.e. $\dot \tau=0$. 
\end{proof}

\smallskip
\noindent \emph{Horizontal deformations.} 
The regular locus $\cM'$ of the Hitchin moduli space
admits a connection known as the Gauss-Manin connection\footnote{The Gauss-Manin connection is considerably more general.  Consider a family of algebraic  varieties 
$\mathcal{X} \rightarrow \mathcal{B}$ where the fibers $\pi^{-1}(b)=X_b$ are all diffeomorphic. Fixing $k \in \mathbb{N}$, the associated deRham cohomology groups $H^k_{dR}(X_b)$ are all isomorphic. The Gauss-Manin connection is a connection on the vector bundle $\mathcal{H} \rightarrow \mathcal{B}$ whose fiber over $b$ is $\mathcal{H}_b=H^k_{dR}(X_b)$. 
As such, the Gauss-Manin connection allows us to identify nearby fibers of $\mathcal{H}$.

For the $SU(n)$-Hitchin moduli space, the relevant family of varieties is the family of spectral curves $\Sigma_b$ and the relevant cohomology groups are $\mathcal{H}_b=H^{0,1}(\Sigma_b)_\sigma$,
since $T \mathrm{Prym}(\Sigma_b) \simeq H^0(\Sigma_b, K_{\Sigma_b})_\sigma^* \simeq H^{0,1}(\Sigma_b)_\sigma$. We view $\cM' \simeq \mathcal{H}/\Gamma$ where $\Gamma$ is a local system with fiber $\Gamma_b$.
}
which allows us to identify nearby fibers, and consequently define a horizontal deformation.
Note that each of the $2^{2 \cdot \mathrm{genus}(C})$ Hitchin sections are horizontal sections of $\mathrm{Hit}: \cM' \rightarrow \cB'$. 

The Gauss-Manin connection on the Hitchin moduli space is defined as follows:
As in the construction of the limiting metric $h_\infty$,
equip the spectral line bundle $\cL \rightarrow \Sigma_b$ with parabolic weights $-\frac{1}{2} \cdot \mathrm{ord}(\widetilde{p})$
at each of the ramification points $\widetilde{p} \in \Sigma$, where $\mathrm{ord}(\widetilde{p})$ is
an integer coming from the local order of the discriminant section which is described in \cite[\S2.1]{FredricksonSLn}. Note that for $\cM'_{SU(2)}$, $\mathrm{ord}(\widetilde{p})=1$.
There is a unique hermitian metric adapted to the parabolic weights solving 
$F_{D(\delbar_L, h_L)}=0$. This hermitian metric $h_L$ gives rise to two objects:
\begin{itemize}
 \item the limiting metric $h_\infty$ is obtained from orthogonal push-forward of $h_L$, and 
 \item $D(\delbar_L, h_L)$ is a projectively-flat $U(1)$-connection on the underlying complex vector bundle $L \rightarrow \Sigma_{\mathrm{top}}$ with monodromy $(-1)^{\mathrm{ord}(\widetilde{p})}$ around each of the ramification points $\widetilde{p} \in \Sigma$.
\end{itemize}
Since the same complex vector bundle $L$ over the topological spectral curve $\Sigma_{\mathrm{top}}$ underlies all nearby Higgs bundles, for any Higgs bundle near $[(\delbar_E, \varphi)]$, we also get a map $\pi_1(\Sigma-\widetilde{Z}) \rightarrow U(1)$.
We can identifying nearby
fibers of $\cM'$ through this topological data.  A deformation $(\dot{\eta}, \dot{\varphi})$ is said to be horizontal with respect to the Gauss-Manin connection---or simply \emph{horizontal}---if the holonomy
map $\pi_1(\Sigma-\widetilde{Z}) \rightarrow U(1)$
is infinitesimally constant in the direction $(\dot{\eta}, \dot{\varphi})$.
\medskip

\begin{lem}\label{lem:horizontal}
Fix a deformation $(\dot{\eta}, \dot{\varphi})$ of $(\delbar_E, \varphi)$.
Let $(\dot \tau, \dot{\xi})$ be the associated deformation of $(\lambda, \delbar_L)$,
and let $\dot{\nu}_L$ be the associated deformation of the Hermitian metric $h_L$ satisfying
\begin{equation}\label{eq:harmonic}
 \del_L^{h_L} \delbar_L \dot{\nu}_L - \del^{h_L}_L \dot{\xi} =0.
\end{equation}
A deformation is horizontal if, and only if,
\begin{equation}
 \delbar_L \dot{\nu}_L - \dot{\xi}  =0.
\end{equation}
\end{lem}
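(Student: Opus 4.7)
The plan is to reinterpret horizontality as the vanishing of the Dolbeault class $[\dot{\xi}] \in H^{0,1}(\Sigma_b)_\sigma$, then to recognize the form $\dot{\xi} - \delbar_L \dot{\nu}_L$ carved out by \eqref{eq:harmonic} as the unique $h_L$-harmonic representative of that class. The lemma will then follow from the uniqueness of harmonic representatives within a Dolbeault class.

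First I would translate horizontality into a cohomological condition. Under the variation $(\dot{\xi}, \dot{\nu}_L)$ of $(\delbar_L, h_L)$, the infinitesimal change $\dot{D}_L$ of the Chern connection $D(\delbar_L, h_L)$ splits as $\dot{D}_L = \dot{A}_L^{1,0} + \dot{A}_L^{0,1}$, with $(0,1)$-component
\[
\dot{A}_L^{0,1} = \dot{\xi} - \delbar_L \dot{\nu}_L
\]
given by the abelian analog of \eqref{eq:defuninf}; the $(1,0)$-component is determined from $\dot{A}_L^{0,1}$ by $h_L$-unitarity. Since $D(\delbar_L, h_L)$ is projectively flat on $\Sigma_b - \widetilde{Z}$ with prescribed monodromy around $\widetilde{Z}$, and since the deformations in question preserve both the parabolic weights and the $\sigma$-condition, the linearized flat connection $\dot{D}_L$ defines a class in $H^1(\Sigma_b, \I\R)_\sigma$ whose vanishing is precisely the infinitesimal triviality of the holonomy representation $\pi_1(\Sigma_b - \widetilde{Z}) \to U(1)$. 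Projecting to the $(0,1)$-component in the Hodge decomposition identifies this class with $[\dot{A}_L^{0,1}] \in H^{0,1}(\Sigma_b)_\sigma$, which equals $[\dot{\xi}]$ since $\delbar_L \dot{\nu}_L$ is $\delbar_L$-exact.

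Second, I would observe that equation \eqref{eq:harmonic} rearranges to $\partial_L^{h_L}(\dot{\xi} - \delbar_L \dot{\nu}_L) = 0$, so $\dot{A}_L^{0,1}$ is $\partial_L^{h_L}$-closed. Since any $(0,1)$-form on a Riemann surface is automatically $\delbar_L$-closed, $\dot{A}_L^{0,1}$ lies in $\ker \delbar_L \cap \ker \partial_L^{h_L}$ and is therefore the $h_L$-harmonic representative of $[\dot{\xi}]$. Uniqueness of the harmonic representative then yields $[\dot{\xi}] = 0$ if and only if $\dot{A}_L^{0,1} = 0$, i.e., $\delbar_L \dot{\nu}_L - \dot{\xi} = 0$, which is the lemma.

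The main technical point to watch is the behavior at the ramification divisor $\widetilde{Z}$: $h_L$ is a parabolic Hermitian-Einstein metric singular along $\widetilde{Z}$, and $\dot{\nu}_L$ may inherit singularities there. One must verify that the $L^2$-harmonic theory with respect to $h_L$ genuinely computes the parabolic cohomology $H^{0,1}(\Sigma_b)_\sigma$ in play, and that the solution $\dot{\nu}_L$ of \eqref{eq:harmonic} has enough tameness at $\widetilde{Z}$ to identify $\dot{A}_L^{0,1}$ as a bona fide $L^2$-harmonic representative. These points should follow from the local models near $\widetilde{Z}$ used to construct $h_L$ in \cite{FredricksonSLn}, but they are where the concrete analytic work sits.
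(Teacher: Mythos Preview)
Your proposal is correct and takes essentially the same approach as the paper: both identify the deformation of the Chern connection as a harmonic form and use uniqueness of the harmonic representative to conclude that horizontality (vanishing of the relevant cohomology class) is equivalent to pointwise vanishing of $\dot{\xi}-\delbar_L\dot{\nu}_L$. The paper phrases this in the real de\,Rham picture---writing the unitary deformation $\dot{\xi}-\delbar_L\dot{\nu}_L+(\dot{\xi}-\delbar_L\dot{\nu}_L)^{\dagger_{h_L}}$, noting it is a harmonic real $1$-form, and observing that its periods over all loops vanish---whereas you phrase it in the Dolbeault picture, but these are equivalent via the Hodge decomposition; your caution about the behavior at $\widetilde{Z}$ is well-placed and is handled in the paper not here but in the subsequent proof of Theorem~\ref{thm:semiflatisL2}.
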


\begin{proof}
 Consider the deformation of $\delbar_L + \eps \dot{\xi}$ and $(h_L)_\eps(v,w) = h_L(\e^{\eps \dot{\nu}_L} v, \; \e^{\eps \dot{\nu}_L} w)$.
Then, in $h_L$-unitary local frame the corresponding deformation of the Chern connection is
 \begin{equation} \label{eq:defofCoulomb}
  D(\delbar_L, h_L) + \eps \left(\dot{\xi} -\delbar_L \dot{\nu}_L + (\dot{\xi} -\delbar_L \dot{\nu}_L)^{\dagger_{h_L}} \right) + O(\eps^2).
 \end{equation}
Given a horizontal deformation, the monodromy is constant, so for any loop $\alpha$ 
in the spectral curve $\Sigma$, the infinitesimal change in monodromy vanishes, i.e.
\begin{equation} \label{eq:monodromyinf}
  \int_\alpha \dot{\xi} -\delbar_L \dot{\nu}_L + (\dot{\xi} -\delbar_L \dot{\nu}_L)^{\dagger_{h_L}}  =0.
\end{equation}
Note that the integrand is a harmonic $1$-form \cite[Lemma 5.2.3]{Jost},
hence a representative for the cohomology $H^1(\Sigma, \R)$.
Because the integral vanishes over any cycle $\alpha$, the integrand
is identically zero, i.e. $\delbar_L \dot{\nu}_L - \dot{\xi}=0$, as claimed.
Conversely, if
$\delbar_L \dot{\nu}_L-\dot{\eta}=0$, then \eqref{eq:monodromyinf} is satisfied, so the deformation $(\dot \tau, \dot{\xi})$ is horizontal. 
\end{proof}

\bigskip

We can finally give a practical characterization of the semiflat metric 
 with scaling parameter $\kappa>0$ in terms of horizontal and vertical deformations. We use this in Theorem \ref{thm:semiflatisL2} to show that the $\kappa=1$ semiflat metric is the $L^2$-metric on $\cM'_\infty$.
\begin{prop} \label{prop:sfcharacterization}
The semiflat metric with scale $\kappa$ is characterized by three properties:
 \begin{enumerate}
 \item On horizontal deformations, the semiflat metric is $ \kappa \int_{\Sigma_b} 2|\dot \tau|^2$.
 \item On vertical deformations, the semiflat metric is $ \kappa^{-1} \int_{\Sigma_b} 2 | \dot{\xi} -\delbar_L \dot{\nu}_L|^2.$ 
 \item Horizontal and vertical deformations are orthogonal.
 \end{enumerate}
\end{prop}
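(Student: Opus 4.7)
The plan is to unpack the definition of the semiflat metric as $g_{\mathrm{sK}}^{-1} \oplus g_{\mathrm{sK}}$ on the splitting $T_{(\cE,\varphi)}\cM' \simeq T_b^*\mathcal{B}' \oplus T_b\mathcal{B}'$ furnished by \eqref{eq:tangentiso}, and then to match each summand with one of the three listed conditions. The crucial identification is that vertical deformations, i.e.\ those tangent to the fiber $\mathrm{Prym}(\Sigma_b)$, fill out the $T_b^*\mathcal{B}'$ summand (via Serre duality on the spectral curve), while horizontal deformations---the horizontal lifts under the Gauss-Manin connection---fill out $T_b\mathcal{B}'$. Property (3) then follows directly from the direct-sum structure of $g_{\mathrm{sK}}^{-1} \oplus g_{\mathrm{sK}}$, rescaled by $\kappa$.

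For property (1), a horizontal deformation projects to $\dot b \in T_b\mathcal{B}'$, which via the isomorphism \eqref{eq:tau} corresponds to $\dot\tau \in H^0(\Sigma_b, K_{\Sigma_b})_\sigma$. Applying the special K\"ahler metric formula \eqref{eq:sK} with scale $\kappa$ immediately yields $\kappa \int_{\Sigma_b} 2|\dot\tau|^2$.

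For property (2), I would first verify that $\dot\xi - \delbar_L \dot\nu_L$ is the harmonic representative of $[\dot\xi] \in H^{0,1}(\Sigma_b)_\sigma$: the $\delbar_L$-component of $\dot\xi - \delbar_L \dot\nu_L$ vanishes because $\delbar_L^2 = 0$ and $(0,2)$-forms vanish on the Riemann surface $\Sigma_b$, while the $\del_L^{h_L}$-component vanishes by the abelian Coulomb gauge equation \eqref{eq:dogs}. Moreover $[\dot\xi - \delbar_L \dot\nu_L] = [\dot\xi]$ since $\delbar_L \dot\nu_L$ is exact. Under the isomorphism $T_b^*\mathcal{B}' \simeq H^{0,1}(\Sigma_b)_\sigma$ given by Serre duality pairing against $H^0(\Sigma_b, K_{\Sigma_b})_\sigma$, Hodge theory shows that the dual of the $L^2$-metric on holomorphic $1$-forms is precisely the $L^2$-metric on the harmonic representative. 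Rescaling by $\kappa^{-1}$ then gives $\kappa^{-1}\int_{\Sigma_b} 2|\dot\xi - \delbar_L \dot\nu_L|^2$.

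The main obstacle I anticipate is the careful bookkeeping of signs and normalization factors in the Serre duality/Hodge theory identification of $g_{\mathrm{sK}}^{-1}$, particularly the restriction to the $\sigma$-invariant subspaces governed by \eqref{eq:sigma}. I would handle this by fixing an $L^2$-orthonormal basis of $H^0(\Sigma_b, K_{\Sigma_b})_\sigma$, producing the dual basis of harmonic $(0,1)$-forms via Hodge decomposition and complex conjugation, and matching up the pairings explicitly. The remaining verifications---that Hitchin sections are indeed horizontal and that $\mathrm{Hit}_*$ kills vertical deformations---are either immediate from the definitions or already established in the earlier lemmas.
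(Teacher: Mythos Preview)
Your proposal is correct and follows essentially the same route as the paper: both derive property (3) from the direct-sum form $g_{\mathrm{sK}}^{-1}\oplus g_{\mathrm{sK}}$, property (1) from \eqref{eq:sK}, and property (2) by identifying $T_b^*\cB'\simeq H^{0,1}(\Sigma_b)_\sigma$ via Serre duality and recognizing $\dot\xi-\delbar_L\dot\nu_L$ as the harmonic representative, so that the dual metric is the $L^2$-norm on harmonic $(0,1)$-forms. The only cosmetic difference is that the paper writes the duality map explicitly as $\Psi^{-1}:\beta\mapsto\kappa^{-1}\overline{\star}\beta$ rather than passing through an orthonormal basis.
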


\begin{proof}
 The first property is stated in \eqref{eq:sK}. The third property follows because $g_{\semif} = g_{\mathrm{sK}}^{-1} \oplus g_{\mathrm{sK}}$. For the second property, we need to unpack the duality in \eqref{eq:dualitysf}. (Note this appears in \cite[\S2.4.1]{MSWW17}.) Fiber-wise, the special K\"ahler metric $g_{\mathrm{sK}}$ is a metric on $V=H^0(\Sigma_b, K_{\Sigma_b})_\sigma$, the space of holomorphic $(1,0)$ forms on $\Sigma_b$. The metric $g_{\mathrm{sK}}$ defines a fiber-wise isomorphism $\Psi: V \rightarrow V^*=H^0(\Sigma_b, K_{\Sigma_b})^*_{\sigma} \simeq H^{1,0}(\Sigma_b)_\sigma$, as well as a
 metric $g_{\mathrm{sK}}^{-1}$ on the dual bundle which is fiber-wise defined on $V^*$ by $\Psi^* g_{\mathrm{sK}}^{-1}=g_{\mathrm{sK}}$. We can identify $V^*=H^0(\Sigma_b, K_{\Sigma_b})^*_\sigma \simeq H^{1,0}(\Sigma)^*_\sigma$ with $H^{0,1}(\Sigma_b)_\sigma$
by Serre duality, i.e. the natural pairing
$
  H^{0,1}(\Sigma_b) \times H^{1,0}(\Sigma_b) \rightarrow \C
$
 given by $(\alpha, \beta) \mapsto 2\int_{\Sigma_b} \alpha \wedge \beta$.
 Finally, we can identify $V^*$ with the space of harmonic forms $\mathcal{H}^{0,1}(\Sigma_b)_\sigma$. With these identifications, the map $\Psi^{-1}$ is simply
 \begin{eqnarray}
  \Psi^{-1}: V^* \simeq \mathcal{H}^{0,1}(\Sigma)_\sigma &\rightarrow& V \simeq H^{1,0}(\Sigma_b)_\sigma \\ \nonumber
\beta &\mapsto& \frac{1}{\kappa} \overline{\star} \beta.
 \end{eqnarray}
 Consequently, the metric on harmonic $(1,0)$-forms is simply given by $\norm{\beta}^2_{g_{\mathrm{sK}}^{-1}}=\kappa^{-1} \int_{\Sigma} 2 |\beta|^2$.
 Given a deformation $\dot{\xi}$ of $\delbar_L$ in $H^{0,1}(\Sigma_b)$, the associated harmonic representative is $\dot{\xi}-\delbar_L \dot{\nu}_L$ from \eqref{eq:harmonic}. Thus, the semiflat metric satisfies the second property.
\end{proof}

\subsubsection{Characterization of $L^2$-metric on \texorpdfstring{$\cM'_\infty$}{moduli space of limiting configurations}}

\begin{thm} \label{thm:semiflatisL2}
The $\kappa=1$ semiflat metric $g_{\semif}$ is the natural $L^2$-metric on the moduli space of limiting configurations $\cM'_\infty$, for deformations in formal Coulomb gauge.
Moreover, 
\begin{equation}\label{eq:gsf}
\norm{(\dot{\eta}, \dot{\varphi}, \dot{\nu}_{\infty})}^2 _{g_\semif}
=2
     \int_C \IP{\dot{\eta} - \delbar_E \dot{\nu}_{\infty},  \dot{\eta}}_{h_\infty}  +
 \IP{ \dot{\varphi} +[\dot{\nu}_{\infty}, \varphi], \dot{\varphi}}_{h_\infty}.
\end{equation}

\end{thm}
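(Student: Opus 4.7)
The plan has two parts: first establish formula \eqref{eq:gsf} by repeating the integration-by-parts argument of Proposition \ref{prop:metricdiff} with careful handling of the singularities of $h_\infty$; second, verify that the expression in \eqref{eq:gsf} satisfies the three properties characterizing $g_{\semif}$ in Proposition \ref{prop:sfcharacterization}.

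For the first step, I would start from the standard $L^2$-norm \eqref{eq:metric1} of $(\dot{A}^{0,1}_\infty, \dot{\Phi}_\infty)$, substitute the identifications \eqref{eq:defuninf} to convert to the triple $(\dot{\eta}, \dot{\varphi}, \dot{\nu}_\infty)$, and then integrate by parts using the formal Coulomb gauge equation \eqref{eq:7}. The manipulation is identical to the computation \eqref{eq:gL2comp}, except that the integrand $\IP{\dot{\eta} - \delbar_E \dot{\nu}_\infty, \dot{\nu}_\infty}_{h_\infty}$ is singular at the branch locus $Z \subset C$: both $h_\infty$ and $\dot{\nu}_\infty$ may blow up there. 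The plan is to excise small disks $\bD_\eps(p)$ around each $p \in Z$, perform integration by parts on the complement, and show that the boundary integrals on $\partial \bD_\eps(p)$ vanish as $\eps \to 0$. The needed decay estimates follow from the explicit local form of $h_\infty$ in an adapted holomorphic frame (see \eqref{eq:shape} for $SU(2)$, and \cite{FredricksonSLn} for $SU(n)$) combined with the Dirichlet-type estimate implicit in Corollary \ref{cor:dirichlet}.

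For the second step, I would abelianize by pushing everything up to the spectral cover. A deformation $(\dot{\eta}, \dot{\varphi}, \dot{\nu}_\infty)$ corresponds to spectral data $(\dot{\xi}, \dot{\tau}, \dot{\nu}_L)$ on $\Sigma$ with $\dot{\nu}_L$ satisfying \eqref{eq:dogs}. Because $h_\infty$ is the orthogonal pushforward of the parabolic Hermitian-Einstein metric $h_L$ on $\cL \to \Sigma$, all endomorphism-valued forms appearing in \eqref{eq:gsf} are abelian (diagonal in the local sheet decomposition), the commutators $[\dot{\nu}_\infty, \varphi]$ vanish upstairs, and the $\varphi \dot{\varphi}$ pairing restricts on each sheet to a $|\dot\tau|^2$ contribution. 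The pushforward identity then gives
\begin{equation*}
\norm{(\dot{\eta}, \dot{\varphi}, \dot{\nu}_\infty)}^2_{g_{L^2}(\cM'_\infty)} = 2\int_\Sigma |\dot{\xi} - \delbar_L \dot{\nu}_L|^2_{h_L} + 2\int_{\Sigma} |\dot{\tau}|^2.
\end{equation*}
Decomposing an arbitrary tangent vector into horizontal and vertical parts and applying Lemmas \ref{lem:horizontal} and \ref{lem:vertical}: for horizontal deformations $\dot{\xi} - \delbar_L \dot{\nu}_L = 0$, leaving only the $|\dot{\tau}|^2$ piece, which matches condition (1) of Proposition \ref{prop:sfcharacterization}; for vertical deformations $\dot{\tau} = 0$, leaving the other piece, matching condition (2). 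A short computation verifying that the cross terms vanish gives orthogonality, condition (3). Together these imply the expression agrees with $g_{\semif}$ at $\kappa = 1$.

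The main obstacle is the singular integration by parts in the first step: because $h_\infty$ and $\dot{\nu}_\infty$ are singular on $Z$, one must check that the combined singular behavior is tame enough that boundary contributions on $\partial \bD_\eps(p)$ vanish in the $\eps \to 0$ limit. This is the point emphasized in the remark after Corollary \ref{cor:metricdiff}, and it requires the precise asymptotics of $h_\infty$ near the ramification divisor together with the mild, explicitly bounded singularities of solutions $\dot{\nu}_\infty$ of \eqref{eq:7}. Once these estimates are in hand, the abelianization step is essentially an algebraic reorganization, since the spectral correspondence was set up precisely so that solutions of \eqref{eq:7} downstairs correspond to solutions of \eqref{eq:dogs} upstairs.
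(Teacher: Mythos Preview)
Your two-step outline matches the paper's proof exactly: integrate by parts on $C\setminus\bigcup_p\bD_\eps(p)$, control the boundary contribution, then abelianize via the spectral cover and check the three properties of Proposition~\ref{prop:sfcharacterization}. The abelianized formula and the verification of properties (1)--(3) via Lemmas~\ref{lem:horizontal} and~\ref{lem:vertical} are precisely what the paper does.

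The one substantive divergence is in how you propose to kill the boundary term. You appeal to ``the explicit local form of $h_\infty$'' and a ``Dirichlet-type estimate implicit in Corollary~\ref{cor:dirichlet}.'' The latter reference is misplaced: Corollary~\ref{cor:dirichlet} is a uniqueness statement, not a decay estimate, and does not by itself control the growth of $\dot\nu_\infty$ near $Z$. A naive power-count is also uncomfortable: $\dot\nu_\infty$ can have a genuine $1/z$ pole (cf.\ \eqref{eq:defshape}), so you would need to argue that $\dot\eta-\delbar_E\dot\nu_\infty$ vanishes to sufficiently high order near $Z$, not merely that the product is integrable. The paper sidesteps any explicit local analysis: it pulls the boundary integral up to $\Sigma$, observes that the monodromy of the abelian Chern connection around each ramification point is the fixed constant $(-1)^{\mathrm{ord}(\widetilde p)}$, hence its infinitesimal variation vanishes for \emph{every} deformation, and concludes (by harmonicity on the punctured disk) that $\dot\xi-\delbar_L\dot\nu_L=0$ identically near $\widetilde Z$. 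The boundary integrand $\IP{\dot\xi-\delbar_L\dot\nu_L,\dot\nu_L}_{h_L}$ is then literally zero, with no estimate required. This monodromy argument is the missing idea in your first step; once you have it, the rest of your plan goes through verbatim.
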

\begin{rem}
In the $SU(2)$ case, Mazzeo-Swoboda-Weiss-Witt prove that $g_{\semif}$ is the natural $L^2$-metric on the moduli space of limiting configurations in \cite[Proposition 3.7, Proposition 3.11, Lemma 3.12]
{MSWW17}. Comparing with \cite[Proposition 3.11]{MSWW17}, their harmonic representative $\alpha_\infty$ is the deformation of the Chern connection  $\left(\dot{\xi} -\delbar_L \dot{\nu}_L + (\dot{\xi} -\delbar_L \dot{\nu}_L)^{\dagger_{h_L}} \right)$ in \eqref{eq:defofCoulomb}.  
\end{rem}

\begin{proof}
We will prove that $g_{L^2}(\cM'_\infty)$ satisfies each of the three properties characterizing the semiflat metric in Proposition \ref{prop:sfcharacterization}.

As a corollary to the proof of Proposition \ref{prop:metricdiff}, the $L^2$-metric on $\cM_\infty'$ is given by 
\begin{eqnarray} \nonumber \label{eq:s1}
\norm{(\dot{\eta}, \dot{\varphi}, \dot{\nu}_{\infty})}^2_{g_{L^2}(\cM'_\infty)}
&=& \lim_{\eps \to 0} \left( 2 
     \int_{C_\eps} 
     \norm{\dot{A}_{\infty}^{0,1}}^2_{h_0} 
+ \norm{\dot{\Phi}}^2_{h_0} 
    \right)\\ \nonumber
    &=&
\lim_{\eps \to 0}\left( 2
     \int_{C_\eps} \IP{\dot{\eta} - \delbar_E \dot{\nu}_{1\infty},  \dot{\eta}}_{h_\infty}  +
 \IP{ \dot{\varphi} +[\dot{\nu}_{\infty}, \varphi], \dot{\varphi}}_{h_\infty}\right.\\ 
 & &  \quad -\left. 2 \mathrm{Re} \int_{\del C_\eps} \IP{\dot \eta- \delbar_E \dot{\nu}_{\infty}, \dot{\nu}_{\infty}}_{h_\infty} \right),
\end{eqnarray}
where $C_\eps$ is an arbitrary exhaustion of $C-Z$ by nested compact sets, such as $C_\eps=C-\cup_{p \in Z} B_\eps(p)$. Because $h_\infty$ is singular at the branch locus $Z$
and thus $\dot{\nu}_\infty$ may be singular, the integration-by-parts step generates the $\del C_\eps$ boundary terms. We first show that this boundary term vanishes.  To do this, we pullback and compute the integral on the spectral cover.
 Let $(\dot \tau, \dot{\xi}, \dot{\nu}_{L})$  be the corresponding deformation of the spectral data.
 Then, because everything diagonalizes on $\Sigma_b$, given any $1$-cycle $\gamma$ in $C$ with lift $\widetilde{\gamma}$ in $\Sigma_b$,
\begin{eqnarray}
 \int_{\gamma} \IP{\dot\eta- \delbar_E \dot{\nu}_{\infty}, \dot{\nu}_{\infty}}_{h_\infty}&=&
 \frac{1}{n} \int_{\widetilde{\gamma}} \IP{\pi^*  \dot \eta- \pi^*\delbar_E \pi^*\dot{\nu}_{\infty}, \pi^*\dot{\nu}_{\infty}}_{\pi^*h_\infty}\\ \nonumber 
 &=&
     \int_{\widetilde{\gamma}} \IP{\dot\xi - \delbar_L \dot{\nu}_{L},  \dot\nu_{L} }_{h_L}.
\end{eqnarray}
If $\widetilde{\gamma}$ is homotopic to the circle of radius $\eps$ around a ramification point $\widetilde{p} \in \widetilde{Z}$,
then because the monodromy of the associated abelian Chern connection 
is $(-1)^{\mathrm{ord}(\widetilde{p})}$, its infinitesimal variation vanishes.  Consequently, by a similar argument as in the proof of Lemma \ref{lem:horizontal},  $\int_{\widetilde{\gamma}} \dot{\xi} - \delbar_L \dot{\nu}_{L} + (\dot{\xi} - \delbar_L \dot{\nu}_{L})^\dagger =0$---for all variations, and not just horizontal variations.  Since this integrand is a harmonic representative for $H^1(\bD^\times, \R)$, 
the space of $1$-forms on the punctured local disk $\bD^\times$ around $\widetilde{p}$ by \eqref{eq:dogs}, the integrand vanishes, and $\dot{\xi} - \delbar_L \dot{\nu}_{L}=0$. Consequently, the boundary term in \eqref{eq:s1} vanishes. As a corollary, we've proved that $\norm{(\dot{\eta}, \dot{\varphi}, \dot{\nu}_{\infty})}^2 _{g_{L^2}(\cM'_\infty)}
=2
     \int_C \IP{\dot{\eta} - \delbar_E \dot{\nu}_{\infty},  \dot{\eta}}_{h_\infty}  +
 \IP{ \dot{\varphi} +[\dot{\nu}_{\infty}, \varphi], \dot{\varphi}}_{h_\infty}$, which appears in \eqref{eq:gsf}.

We now pullback the expression for $g_{L^2}(\cM'_\infty)$ in \eqref{eq:s1} to the spectral cover.
Because everything diagonalizes on $\Sigma_b$,
 \begin{eqnarray} \nonumber
  & &\IP{(\dot{\eta}_1, \dot{\varphi}_1, \dot{\nu}_{1, \infty}) \;, \; 
(\dot{\eta}_2, \dot{\varphi}_2, \dot{\nu}_{2, \infty})}_{g_{L^2}(\cM'_\infty)}\\ \nonumber 
&=&2 \mathrm{Re}
     \int_C \IP{\dot{\eta}_1 - \delbar_E \dot{\nu}_{1, \infty},  \dot{\eta}_2 }_{h_\infty}  +
 \IP{ \dot{\varphi}_1 +[\dot{\nu}_{1, \infty}, \varphi], \dot{\varphi}_2}_{h_\infty}\\ \nonumber  
 &=& \frac{2}{n}
  \mathrm{Re}
     \int_{\Sigma_b} \IP{\pi^* \dot{\eta}_1 - \pi^*\delbar_E \pi^*\dot{\nu}_{1, \infty},  \pi^*\dot{\eta}_2 }_{\pi^*h_\infty}  \\ \nonumber 
     & & \qquad+
 \IP{\pi^* \dot{\varphi}_1 +[\pi^*\dot{\nu}_{1, \infty}, \pi^*\varphi], \pi^*\dot{\varphi}_2}_{\pi^*h_\infty}\\ \nonumber 
 &=& 2
  \mathrm{Re}
     \int_{\Sigma_b} \IP{\dot\xi_1 - \delbar_L \dot{\nu}_{1,L},  \dot\xi_2 }_{h_L} +
 \IP{\dot\tau_1 +[\dot{\nu}_{1, L}, \lambda], \dot\tau_2}_{h_L}\\ \label{eq:usefulcharacterization}
  &=& 2
  \mathrm{Re}
     \int_{\Sigma_b} \IP{\dot\xi_1 - \delbar_L \dot{\nu}_{1,L},  \dot\xi_2 - \delbar_L \dot{\nu}_{2,L} } +
 \IP{\dot\tau_1, \dot\tau_2}
 \end{eqnarray}
 In the last line we used integration-by-parts and \eqref{eq:dogs}. 
We will use the expression in \eqref{eq:usefulcharacterization} to prove that $g_{L^2(\cM'_\infty)}$ satisfies the three characteristic properties of the semiflat metric
in Proposition \ref{prop:sfcharacterization}.

 \medskip

We prove that horizontal and vertical deformations are orthogonal in $g_{L^2}(\cM'_\infty)$.
 Let $(\dot\tau_1, \dot{\xi}_1, \dot{\nu}_{1, L})$ and 
 $(\dot\tau_2, \dot{\xi}_2, \dot{\nu}_{2, L})$ respectively be a horizontal and vertical deformation.  
 Then, because the horizontal deformations satisfies $\dot \xi_1 - \delbar_L \dot{\nu}_{1, L}=0$
 and vertical deformation satisfies $\dot\tau_2=0$, we see from \eqref{eq:usefulcharacterization} that $\IP{(\dot{\eta}_1, \dot{\varphi}_1, \dot{\nu}_{1, \infty}) \;, \; 
 (\dot{\eta}_2, \dot{\varphi}_2, \dot{\nu}_{2, \infty})}_{g_{L^2}(\cM'_\infty)}=0$.
 Thus $g_{L^2}(\cM'_\infty)$ satisfies the third characteristic property of the semiflat metric. It is immediate that for a horizontal deformation 
\begin{eqnarray}
 \norm{(\dot{\eta}, \dot{\varphi}, \dot{\nu}_\infty)}^2_{g_{L^2}(\cM')} 
 &=&  2 \int_{\Sigma_b} \left|\dot \tau \right|^2 ,
\end{eqnarray}
agreeing with the first characteristic property of the semiflat metric.
It is immediate that for a vertical deformation
\begin{eqnarray}
 \norm{(\dot{\eta}, \dot{\varphi}, \dot{\nu}_\infty)}^2_{g_{L^2}(\cM')} 
      &=& 2  \int_{\Sigma_b} |\dot\xi - \delbar_L \dot{\nu}_{L}|^2
\end{eqnarray}
Consequently, $g_{L^2}(\cM'_\infty)$ is the semiflat metric.
\end{proof}

\section{The asymptotic geometry of the \texorpdfstring{$SU(2)$}{SU(2)}-Hitchin moduli space}\label{sec:rank2}

We prove Theorem \ref{thm:main} for $SU(2)$ in \S\ref{sec:far}.  Our result depends on the local analysis in \S\ref{sec:near}.

\subsection{The difference \texorpdfstring{$g_{\mathrm{app}} - g_{\mathrm{sf}}$}{g\_app - g\_sf} on the local disks} \label{sec:near}
In this section, we study the difference $g_{\mathrm{app}} - g_{\mathrm{sf}}$ on local disks
around the branch points $Z$ of $\pi: \Sigma \rightarrow C$.

\begin{thm} \label{thm:local}
Suppose $q_2$ has a simple zero at $p \in C$. Let 
$z$ be a holomorphic coordinate on the disk such that $q_2=z \de z^2=-\det \varphi$. Additionally, take the local holomorphic frame
in which
 \begin{equation} \label{eq:simplezero}
  \delbar_E =\delbar \qquad \varphi = \begin{pmatrix}0 & z \\ 1 & 0 \end{pmatrix} \de z
  \qquad h_\infty = \begin{pmatrix} |z|^{-1/2} & 0\\ 0& |z|^{1/2} \end{pmatrix}.
 \end{equation}
Note that in this local holomorphic frame $h_t$ need not be diagonal; however,
as in \eqref{eq:htapprox}, define
 \begin{equation} \label{eq:175}
  h_t^\app = \begin{pmatrix} |z|^{-1/2} \e^{-u_t(|z|) \chi(|z|)} & 0\\ 0& |z|^{1/2} \e^{u_t(|z|) \chi(|z|)} \end{pmatrix}.
 \end{equation}
 
 \medskip
 
 Let $[(\dot{\eta}, \dot{\varphi})] \in T_{(\delbar_E, \varphi)} \cM'$ be an infinitesimal Higgs bundle deformation.
 Then on the disk $\bD$ around the point $p \in C$ there is a unique representative in the equivalence class where
 \begin{equation} \label{eq:defshape}
  \dot{\eta} =0, \qquad  \dot{\varphi} = \begin{pmatrix} 0 & \dot{P} \\ 0 & 0 \end{pmatrix} \de z, \qquad  \dot{\nu}_\infty = -\frac{\dot{P}}{4z} \begin{pmatrix}1& 0 \\ 0 & -1 \end{pmatrix} \qquad \mbox{for }\delbar \dot{P}=0.
 \end{equation}
This deformation is in formal Coulomb gauge.
In this local holomorphic frame, the deformation $\dot{\nu}_t^\app$ solving \eqref{eq:7app} is diagonal.
 
For the ray of variations $(0, t \dot{\varphi}) \in T_{(\delbar_E, t \varphi)} \cM'$, there is a constant $\gamma>0$ such that 
\begin{equation} \label{eq:propnear}
\norm{(0, t\dot{\varphi}, \dot{\nu}_t^\app)}^2_{g_{\app}(\bD)}-\norm{(0, t\dot{\varphi}, \dot{\nu}_\infty)}^2_{g_{\semif}(\bD)} =
O(\e^{-\gamma t}).
\end{equation}
\end{thm}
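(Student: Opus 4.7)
The plan is to localize everything on the disk $\bD$, exploit a $\mathbb{Z}/2$-symmetry to diagonalize the metric variations, and then adapt the exact-form / local biholomorphism trick of Dumas--Neitzke (\S\ref{sec:DNsurvey}) to the MSWW approximate-solution framework. I will work in the holomorphic frame of \eqref{eq:simplezero} throughout.

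\emph{Step 1: Normal form.} Using the infinitesimal gauge freedom \eqref{eq:infinitesimal}, choose $\dot\gamma$ on $\bD$ with $\delbar_E \dot\gamma = \dot\eta$, so that in the new representative $\dot\eta \equiv 0$. The Higgs bundle constraint $\delbar_E \dot\varphi + [\dot\eta,\varphi] = 0$ collapses to $\delbar \dot\varphi = 0$, i.e.\ $\dot\varphi$ is holomorphic. Since $\dot\varphi$ is traceless and the diagonal entries would alter the coefficient $q_2$ of $\mathrm{char}_{\varphi+\varepsilon\dot\varphi}$ in a way that, by holomorphy on $\bD$, can be absorbed into a further holomorphic gauge adjustment supported in $\bD$, one arranges $\dot\varphi = \begin{pmatrix}0 & \dot P \\ 0 & 0\end{pmatrix}\de z$ with $\delbar \dot P = 0$. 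Plugging this $(\dot\eta,\dot\varphi)$ and $h_\infty = \diag(|z|^{-1/2},|z|^{1/2})$ into the formal Coulomb-gauge equation \eqref{eq:7} and matching the diagonal/off-diagonal parts directly yields the claimed $\dot\nu_\infty = -\tfrac{\dot P}{4z}\diag(1,-1)$.

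\emph{Step 2: Diagonal structure of $\dot\nu_t^{\app}$.} Let $J=\diag(1,-1)$. Conjugation by $J$ sends $\varphi\mapsto -\varphi$ and $\dot\varphi\mapsto -\dot\varphi$, while fixing $h_t^{\app}$. The two sign changes cancel in every term of \eqref{eq:7app}, so the equation is $J$-invariant. Hence if $\dot\nu_t^{\app}$ is a solution then so is $J\dot\nu_t^{\app} J^{-1}$; by the uniqueness statement of Corollary \ref{cor:dirichlet}, the difference lies in the kernel described by \eqref{eq:vconditions} and can be absorbed. Averaging gives a diagonal representative $\dot\nu_t^{\app} = v_t\diag(1,-1)$ for a complex-valued function $v_t$ on $\bD$, reducing \eqref{eq:7app} to a single scalar PDE for $v_t$ — precisely the $t$-deformed analogue of the ``complex variation'' equation \cite[Eq.~4.14]{DumasNeitzke}.

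\emph{Step 3: Explicit integrands.} With $\dot\eta=0$ and the diagonal forms of $\dot\nu_\infty$, $\dot\nu_t^{\app}$, the metric formulas \eqref{eq:gsf} and \eqref{eq:gapp} both collapse to scalar integrals on $\bD$ of the form
\begin{equation*}
\int_{\bD}\!\bigl(A_t(|z|)\,|\dot P|^2 + B_t(v_t,\dot P;|z|)\bigr)\,\tfrac{\I}{2}\de z\wedge\de\zbar,
\end{equation*}
where $A_t$ and $B_t$ are explicit rational expressions in $|z|$, $\e^{\pm 2u_t\chi}$ and their derivatives. Taking the difference and writing $v_t = v_\infty|_{\bD} + w_t$, the piece of $g_{\app}-g_{\semif}$ depending only on $\dot P$ (through $A_t$) reduces to an integral of $(\cosh(2u_t\chi)-1)$-type factors against $|\dot P|^2$; the remaining cross-terms involve $w_t$ linearly and quadratically.

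\emph{Step 4: Exact-form cancellation and exponential bounds.} The $\cosh(2u_t\chi)-1$ contribution is the one term that naively decays only polynomially: near $z=0$, $u_t$ is of order $|\log|z||$ because of \eqref{eq:asymptotics}. Following Dumas--Neitzke, I would rewrite the dangerous integrand as $\de\omega_t$ for an explicit $1$-form $\omega_t$ built from $\dot P$ and the radial profile $u_t\chi$; this amounts to the observation that the scalar PDE for $v_t$ expresses $B_t$ as a divergence up to exponentially small error. Applying Stokes, the integral localizes to $\partial\bD$, where $\chi\equiv 0$ and $h_t^{\app}=h_\infty$, so the boundary contribution cancels against the corresponding semiflat boundary term (and the spurious central contribution is absent because $\omega_t$ is regular at $z=0$ by construction). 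The cross-term involving $w_t$ is bounded in $L^2(\bD)$ by $\e^{-\gamma t}$ via standard elliptic estimates for the scalar PDE together with the source bound $\|\del_E^{h_\app}\delbar_E \dot\nu_\infty - [\varphi^{\dagger_{h_\app}},\dot\varphi+[\dot\nu_\infty,\varphi]]\|_{L^2(\bD)} = O(\e^{-\gamma t})$, which comes directly from $h_t^{\app}-h_\infty = O(u_t)$ and the $K_0$-asymptotic of $u_t$ on $\bD\setminus\{0\}$ of radius comparable to $1$.

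\emph{Main obstacle.} The delicate step is Step 4: identifying the $1$-form $\omega_t$ so that $\de\omega_t$ captures \emph{exactly} the polynomially-decaying part of the integrand modulo an $O(\e^{-\gamma t})$ error. In the Hitchin-section case of \cite{DumasNeitzke} this is clean because $h_\infty$ and the spectral data are parameterized by a single holomorphic function $P$ and a local biholomorphism tracks the moving zero. Here the zero of $q_2$ still moves infinitesimally with $\dot P$, so I expect to use the same biholomorphic flow to absorb the moving-zero contribution, but I need to check that the cutoff $\chi$ and the $u_t\chi$ factor in $h_t^{\app}$ do not obstruct the exact-form rewriting on the annular region where $\chi'\neq 0$. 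There $u_t$ is already super-polynomially small in $t$ by \eqref{eq:asymptotics}, so the residual annular error is $O(\e^{-\gamma t})$, but verifying this bookkeeping carefully is the heart of the argument.
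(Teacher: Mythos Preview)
Your Steps 1--3 are essentially sound (and the $J$-symmetry argument in Step~2 is a pleasant alternative to the paper's decoupling-plus-Dirichlet argument). The real problem is Step~4, and it is not just bookkeeping.

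First, the exact-form identity of Dumas--Neitzke is a consequence of the fact that $u_t$ solves the radial Painlev\'e-type ODE \eqref{eq:umodel}. The function $u_t\chi$ does \emph{not} solve this ODE on the annulus $\{\chi'\neq 0\}$, so an $\omega_t$ ``built from $\dot P$ and $u_t\chi$'' will not produce an exact integrand there. More importantly, your proposed fallback---the elliptic $L^2$ estimate for $w_t=v_t-v_\infty$ with source obtained by inserting $\dot\nu_\infty$ into \eqref{eq:7app}---fails outright. Near $z=0$ the asymptotic $u_t\sim-\tfrac{1}{2}\log|z|$ (not the $K_0$-tail) governs $h_t^{\app}$, so $h_t^{\app}-h_\infty$ is $O(1)$, not small. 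A direct computation gives the scalar source proportional to $t^2\,\tfrac{\bar z}{|z|}\,\dot P\,\sinh(2u_t)\sim t^2\dot P/z$, which is not even in $L^2(\bD)$, let alone $O(\e^{-\gamma t})$.

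The paper repairs this by inserting a genuine intermediary: the \emph{cutoff-free} model metric $h_t^{\model}$ together with the holomorphic-flow variation $\dot\nu_t^X$ (built from the Hubbard--Masur vector field tracking the moving zero). The point is that $\dot\nu_t^X$ is smooth at $z=0$ and satisfies \eqref{eq:ingaugetriple} for $h_t^{\model}$. Then (i) the comparison $(h_t^{\model},\dot\nu_t^X)$ versus $(h_\infty,\dot\nu_\infty)$ is exactly the Dumas--Neitzke exact form $\de\beta_t$, and Stokes pushes it to $\partial\bD$ where $\e^{-2u_t}-1=O(\e^{-\gamma t})$; (ii) the remaining comparison $(h_t^{\app},\dot\nu_t^{\app})$ versus $(h_t^{\model},\dot\nu_t^X)$ is supported on the annulus $\{\chi\neq 1\}$ and is controlled not by $L^2$ elliptic estimates but by the \emph{maximum principle} applied to the scalar equations for $F_t^{\model}-F_t^{\app}$ and $F_t^{\model}-F_t^X$. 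You mention the biholomorphic flow in passing, but you never use it to construct $\dot\nu_t^X$; without that smooth intermediate variation the singularity of $\dot\nu_\infty$ at $z=0$ blocks both the exact-form rewriting and the perturbative estimate you propose.
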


\begin{rem} \label{rem:holvar}
 The proof uses the holomorphic variations introduced in \cite{HubbardMasur}, and is a relatively straightforward adaptation of the clever argument in Dumas-Neitzke for $(\delbar_E, \varphi)$ in the Hitchin section,
 since
 our choice of local holomorphic frame in \eqref{eq:simplezero}
and representative of $[(\dot \eta, \dot \varphi)]$ in \eqref{eq:defshape} is the same as in \cite{DumasNeitzke}. We include the scaling factor $t$ to make the dependence on $t$ as explicit as possible. 

Note that Dumas-Neitzke do not use the approximate solution at all; we do because $h_t^{\app}$ is diagonal in the disk while the actual harmonic metric may not be diagonal.
 This has the advantage of reducing the analysis from a coupled system of PDEs to a single scalar PDE! For this, it is crucial that $h_\infty, \dot{\nu}_\infty, h_t^\app, \dot{\nu}_t^\app$ are diagonal.
\end{rem}

\begin{notate}
 For convenience, we will use the Pauli matrix notation $\sigma_3= \begin{pmatrix} 1 & 0 \\ 0 & -1 \end{pmatrix}$.
\end{notate}

\begin{proof}
\textsc{Claim 1:} \emph{There is a representative of the class $[(\dot{\eta}, \dot{\varphi})]$ as claimed in \eqref{eq:defshape}.}\\ $\triangleright$
From \eqref{eq:infinitesimal},
$[(\dot{\eta}, \dot{\varphi})]=[(\dot{\eta}', \dot{\varphi}')]$
if $\dot{\eta}'-\dot{\eta}=\delbar \dot \gamma$ and $\dot{\varphi}' -\dot{\varphi}= [\varphi,\dot \gamma]$ for infinitesimal complex gauge transformation $\dot \gamma$. Because $H^{0,1}(U)=0$, it is possible to choose $\dot{\gamma}$ such that $\delbar \dot{\gamma} = -\dot{\eta}$, thus $[(\dot{\eta}, \dot{\varphi})]=[(0, \dot{\varphi}')]$.
Now, taking a second infinitesimal gauge transformation $\dot{\gamma}$ defined entrywise by
$\dot{\gamma}_{11}=-\frac{1}{2} \dot{\varphi}_{21}'$ and $\dot{\gamma}_{12}=\dot \varphi_{11}'$ and $\dot{\gamma}_{21}=0$, we see that $[(\dot{\eta}, \dot{\varphi})]=[(0, \dot{\varphi}')]=[(0, \dot{\varphi}'')]$ for $\dot{\varphi}''$ strictly upper triangular
\begin{equation}
\dot{\varphi}'':=\dot{\varphi}'+ \left[\begin{pmatrix} 0 & z \\ 1 & 0 \end{pmatrix}, \dot{\gamma}\right] =
\begin{pmatrix} 0 & \dot{\varphi}_{12}' + \dot{\varphi}_{21}' z\\ 0 & 0 \end{pmatrix}.
\end{equation}
Note that $\dot{\eta}''=0$ because $\dot{\gamma}$ is holomorphic. Thus, we've proved the
claim.  

Note that there is some remaining infinitesimal gauge freedom.  Given $\dot{\gamma}$ holomorphic and satisfying $[\varphi, \dot{\gamma}]=0$, we have the freedom in \eqref{eq:infinitesimalnu} to shift $\dot{\nu}_\infty \mapsto \dot{\nu}_\infty + \dot{\gamma}$.
$\triangleleft$

\medskip

\noindent \textsc{Claim 2:} \emph{Furthermore, there is an infinitesimal gauge in which
the deformation $\dot{\nu}_\infty=-\frac{\dot{P}}{4z} \sigma_3$.}\\
$\triangleright$
Since $\dot{\nu}_\infty$ solves 
 \eqref{eq:infinitesimaldecoupled},
\begin{equation}
 \dot{\nu}_\infty +\dot{\nu}_\infty^{\dagger_{h_\infty}} = \tau \begin{pmatrix} 0 & z \\ 1 & 0 \end{pmatrix} -2 \mathrm{Re} \left(\frac{\dot{P}}{4z} \right) \begin{pmatrix} 1 & 0 \\ 0 & -1 \end{pmatrix},
\end{equation}
for some function $\psi$. Moreover, because $ (\dot{\nu}_\infty +\dot{\nu}_\infty^{\dagger_{h_\infty}})^{\dagger_{h_\infty}}=( \dot{\nu}_\infty +\dot{\nu}_\infty^{\dagger_{h_\infty}})$, $\psi = \overline{\psi} \frac{|z|}{z}$.
Since $\dot{\nu}_\infty$ solves \eqref{eq:7} and \eqref{eq:infinitesimaldecoupled},
$( \dot{\nu}_\infty +\dot{\nu}_\infty^{\dagger_{h_\infty}})$ also solves 
$\delbar_E \del_E^{h_\infty} (\dot{\nu}_\infty +\dot{\nu}_\infty^{\dagger_{h_\infty}} )=0$,
the infinitesimal version of $F_{D(\delbar_E, h_\infty)}=0$. It follows that 
$\psi$ is holomorphic. 
The infinitesimal gauge transformation $\dot{\gamma}=f \varphi_z$ shifts $\dot{\nu}_\infty + \dot{\nu}_\infty^{\dagger_{h_\infty}} \mapsto \dot{\nu}_\infty + \dot{\nu}_\infty^{\dagger_{h_\infty}} + \left(f + \overline{f}\frac{|z|}{z} \right) \varphi_z$; consequently, taking $\dot{\gamma} = - \frac{1}{2} \psi \varphi_z$, we reduce to the case where $\dot{\nu}_\infty +\dot{\nu}_\infty^\dagger =-2 \mathrm{Re}\left(\frac{\dot{P}}{4z} \right) \sigma_3$.
Lastly, by the Coulomb gauge condition in \eqref{eq:7}, we recover
$\dot{\nu}_\infty = - \frac{\dot{P}}{4z} \sigma_3.$ 
$\triangleleft$

\medskip

\noindent \textsc{Claim 3:} \emph{The deformation $\dot{\nu}_t^\app$ is diagonal.}\\
$\triangleright$ 
Since $h^{\app}_t$ is diagonal in the given local holomorphic frame, the equation  \eqref{eq:7app}  for $\dot{\nu}_t^\app$ decouples into equations for the diagonal and off-diagonal
parts of $\dot{\nu}_t^\app$. The deformation  $\dot{\nu}_t^\app$ agrees with $\dot \nu_\infty$ (which is diagonal) wherever $h_t^\app$ is equal to $h_\infty$, i.e. wherever the cutoff function $\chi =0$. Consequently, by the uniqueness of the solution of the Dirichlet problem (Corollary \ref{cor:dirichlet}) for the off-diagonal terms of $\dot{\nu}_t^\app$ on the closure of the disk $\bD_{\chi \neq 0}$, we conclude that $\dot{\nu}_\app$ is diagonal.
$\triangleleft$
\medskip

For convenience, we introduce the notation
\begin{equation} \label{eq:delta}
 \delta\left( (0, t\dot{\varphi}), (h_1, \dot \nu_1), (h_2, \dot \nu_2) \right)=
 2 \IP{\dot{\varphi} +[\dot{\nu}_1, \varphi], \dot{\varphi}}_{h_1}-
  2\IP{\dot{\varphi} +[\dot{\nu}_2, \varphi], \dot{\varphi}}_{h_2}.
\end{equation}
In order to prove the exponential decay of 
\begin{eqnarray*}
\norm{(0, t\dot{\varphi}, \dot{\nu}_t^\app)}^2_{g_{\app}(\bD)}-\norm{(0, t\dot{\varphi}, \dot{\nu}_\infty)}^2_{g_{\semif}(\bD)}&=& 2\int_C  
 \IP{\dot{\varphi} +[\dot{\nu}_t^\app, \varphi], \dot{\varphi}}_{h_t^\app}-
  \IP{\dot{\varphi} +[\dot{\nu}_\infty, \varphi], \dot{\varphi}}_{h_\infty}\\
  &=&  \int_C \delta \left( (0, t\dot{\varphi}), (h^{\mathrm{app}}_t, \dot \nu^{\mathrm{app}}_t), (h_\infty, \dot \nu_\infty) \right),
 \end{eqnarray*}
 we break the integrand 
into the following two pieces, and deal with these separately:
\begin{equation} \label{eq:decomp2}
\delta \left( (0, t\dot{\varphi}), (h^{\app}_t, \dot \nu^\app_t), (h^{\mathrm{model}}_t, \dot \nu^X_t) \right)
+
\delta \left( (0, t\dot{\varphi}), (h^{\mathrm{model}}_t, \dot \nu^X_t), (h_\infty, \dot \nu_\infty) \right).
\end{equation}
In the same local holomorphic frame as (\ref{eq:simplezero}, \ref{eq:175}), the metric $h_t^{\mathrm{model}}$ is (see \eqref{eq:htmodel})
\begin{equation} \label{eq:htmodel2}
   h_t^{\mathrm{model}} = \begin{pmatrix} |z|^{-1/2} \e^{-u_t(|z|)} & 0\\0 & |z|^{1/2} \e^{u_t(|z|)} \end{pmatrix}.
\end{equation}
The metric variation $\dot\nu^X_t$ is defined using a well-chosen holomorphic variation, as follows.
Suppose $\dot{P}= \sum_{n=0}^\infty a_n z^n$.
Then, closely following Dumas-Neitzke (see Eq. 10.12), 
let \begin{equation} \label{eq:chi}
  \chi = \sum_{n=0}^\infty \frac{a_n}{2n+1} z^n.
\end{equation}
in order to
define a holomorphic vector field $X=\chi \frac{\del}{\del z}$ generating the holomorphic deformation
well-suited to $\dot{q_2}=\dot{P} \de z^2$.
Define the complex function 
\begin{equation}
 F^X_t(z)=  \del_z\chi + 2 \chi \del_z(\frac{1}{2} \log |z| + u_t).
\end{equation}
Then take $\dot{\nu}^X_t = -\frac{1}{2} F^X_t \sigma_3$.
The variation $(0, t \dot{\varphi}, \dot{\nu}^X_t)$ of $(\delbar_E, t\varphi, h^{\mathrm{model}}_t)$
satisfies\footnotemark \eqref{eq:ingaugetriple}. (It is worth noting that $\dot{\nu}_\infty^X = -\frac{\dot{P}}{4z} \sigma_3 = \dot{\nu}_\infty$.)
\footnotetext{The fact that $\dot\nu^X_t$ satisfies the complex variation equation reduces to the fact that 
\begin{equation} \label{eq:scalarX}
 \left( \Delta - 16 t^2 |z| \cosh (2u_t)  \right) F^X_t + 8 t^2 \e^{-2u_t} |z|^{-1} \zbar \dot{P} =0.
\end{equation}
We can reduce this to the case $t=1$ which appears in \cite[Eq. 10.15]{DumasNeitzke} in which the deformation associated to $\dot{P}$ is 
\begin{equation} \label{eq:FX}
 F^X=  \del_z\chi + 2 \chi \del_z(\frac{1}{2} \log |z| + u_1),
\end{equation}
Define $\rho_t(z) =t^{-2/3} z$. One can check that 
$\rho_t^* u_t = u_1$. Because of this, by pulling back the expression \eqref{eq:scalarX} by $\rho_t$ and dividing by $t^{4/3}$, 
we get
\begin{equation}
\left( \Delta - 16 |z| \cosh (2u_1)  \right) \rho_t^*F^X_t + 8  \e^{-2u_1} |z|^{-1} \zbar t^{2/3} \rho_t^*\dot{P} =0.
\end{equation}
Thus the function $F_t^X$ in \eqref{eq:FX} is defined so that $\rho_t^* F^X_t$ solves the complex variation for the deformation $t^{2/3} \rho_t^*\dot{P}$.
}

\medskip

\noindent\textsc{Claim 4:} \emph{For the second piece of the integrand in \eqref{eq:decomp2}, there is a positive constant $\gamma>0$ such that
\begin{equation} \label{eq:claim2}
 \int_{\bD}\delta \left( (0, t\dot{\varphi}), (h^{\mathrm{model}}_t, \dot \nu^X_t), (h_\infty, \dot \nu_\infty) \right) = O( \e^{-\gamma t}).
\end{equation}
}
$\triangleright$
We prove this using Stokes' theorem.
By \cite[Lemma 12]{DumasNeitzke}, the integrand in \eqref{eq:claim2} is exact with
\begin{equation}
\delta \left( (0, t\dot{\varphi}), (h^{\mathrm{model}}_t, \dot \nu^X_t), (h_\infty, \dot \nu_\infty) \right)= \de \beta_t
\end{equation}
for 
\begin{equation} \beta_t = t^2 |z|^{-1}(\e^{-2u_t}-1) \left( 2 |z|^2 \star \de |\chi|^2 + |\chi|^2 \star \de |z|^2 \right).
\end{equation}
By 
Stokes' theorem, the integral in \eqref{eq:claim2} is $\int_{\del \bD} \beta_t$.
On the boundary of the disk $\bD$,  $|\chi|$ scales like $|\dot{P}|$.
The exponential decay of $\e^{-2u_1} -1$ in $|z|$ 
like $\e^{-\gamma |z|^{3/2}}$
in \cite{DumasNeitzke} implies\footnotemark the exponential decay of the term $\e^{-2u_t}-1$ as $\e^{-\gamma t}$.
\footnotetext{This is because \begin{equation}\label{eq:decay}\left.(\e^{-2u_t} -1)\right|_{|z|=R} =\left.\rho_t^*(\e^{-2u_t} -1)\right|_{|z|=t^{2/3} R} = \left.(\e^{-2u_1} -1)\right|_{|z|=t^{2/3} R}. \end{equation}}
This completes the proof of \eqref{eq:claim2}. $\triangleleft$

\medskip

\noindent \textsc{Claim 5:} \emph{ For the first piece of the integrand in \eqref{eq:decomp2}, there is a constant $\gamma>0$ such that 
\begin{equation} \label{eq:integrand4}
\int_{\bD} \delta \left( (0, t\dot{\varphi}), (h^{\app}_t, \dot \nu^\app_t), (h^{\mathrm{model}}_t, \dot \nu^X_t) \right) = O(  \e^{-\gamma t} ).
\end{equation}}
$\triangleright$ 
We further break the integrand in \eqref{eq:integrand4} into two pieces
\begin{equation} \label{eq:integrand3}
  \delta \left( (0, t\dot{\varphi}), (h^{\app}_t, \dot \nu^\app_t), (h^{\mathrm{model}}_t, \dot \nu^{\mathrm{model}}_t) \right) +  \delta \left( (0, t\dot{\varphi}), (h^{\mathrm{model}}_t, \dot \nu^{\mathrm{model}}_t), (h^{\mathrm{model}}_t, \dot \nu^X_t) \right),
\end{equation}
where $h_t^{\mathrm{model}}$ is as in \eqref{eq:htmodel2}, and the deformation $\dot{\nu}_t^{\mathrm{model}}$ solving \eqref{eq:ingaugetriple} is diagonal and agrees with $\dot{\nu}_t^\app$ on the disk where $\chi =1$. Dumas-Neitzke show that the integral of the the second piece of 
\eqref{eq:integrand3} decays like $O(\e^{-\gamma t})$ using the maximum principle.
The integral of the first piece of \eqref{eq:integrand3} reduces to the following integral on the annulus where $\chi \neq 1$, denoted $\bD-\bD_{\chi=1}$:
\begin{eqnarray}
& & \int_{\bD} \delta \left( (0, t\dot{\varphi}), (h^{\app}_t, \dot \nu^\app_t), (h^{\mathrm{model}}_t, \dot \nu^{\mathrm{model}}_t) \right)\\ \nonumber
 &\overset{\eqref{eq:delta}}{=}&
 \int_{\bD-\bD_{\chi=1}} |z|^{-1} \overline{\dot{P}} \left(
 (\e^{-2u_t}-\e^{-2u_t\chi})P 
 + 2z(\e^{-2u_t} F_t^{\mathrm{model}} - \e^{-2u_t \chi} F_t^{\app} )
 \right). 
\end{eqnarray}
Note that 
\begin{equation}
 \e^{-2u_t} F_t^{\mathrm{model}}-\e^{-2u_t \chi} F_t^\app=\left(\e^{-2u_t} - \e^{-2u_t \chi} \right) F_t^{\mathrm{model}} + \left(  F_t^{\mathrm{model}} -F_t^{\app} \right) \e^{-2u_t \chi}.
\end{equation}
Certainly $|z|, |z|^{-1}$, $|P|$, $|\dot{P}|$, $|F_t^{\mathrm{model}}|$, and $\e^{-2u_t \chi}$ are bounded on the annulus $|z| \in [R_1, R_2]$, because $\lim_{t \rightarrow \infty} F_t^{\mathrm{model}} = \frac{\dot{P}}{2z}$ and $\lim_{t \rightarrow \infty} \e^{-2u_t \chi}=1$. It suffices to show that for all points $x$ with $|x| \in [R_1, R_2]$ we have the following two bounds:
\medskip

\noindent \textsc{Subclaim 5A:} \emph{There exists constants $C,\gamma>0$ such that for all $\rho \in [R_1, R_2]$, $\left|\e^{-2u_t}-\e^{-2u_t\chi}|\right|_{\rho} < C\e^{-\gamma t} \left|e^{-2u_1}-\e^{-2u_1\chi}|\right|_{\rho})$.} 

\medskip

\noindent \textsc{Subclaim 5B:} $\int_{\bD-\bD_{\chi=1}} 2z|z|^{-1} \overline{\dot{P}} \e^{-2u_t \chi} \left(F_t^{\mathrm{model}} - F_t^{\app} \right) =O(\e^{-\gamma t})$

\medskip

\noindent$\triangleright\triangleright$ \emph{Proof of \textsc{Subclaim 5A}}: 
Note that  \begin{equation}\label{eq:decay5A}
\left.\left(\e^{-2u_t} -\e^{-2u_t\chi}\right)\right|_{|x|=\rho}<\left.\left(\e^{-2u_t} -1\right)\right|_{|x|=\rho} =\left.\rho_t^*(\e^{-2u_t} -1)\right|_{|x|=t^{2/3} \rho} = \left.(\e^{-2u_1} -1)\right|_{|x|=t^{2/3} \rho}. \end{equation}
From the exponential decay of $u_1$ in $|x|$ like $C\e^{-\gamma |x|^{3/2}}$ we get\footnotemark the exponential decay of this term like $C\e^{-\gamma t}$. $\triangleleft\triangleleft$ 
\footnotetext{
For a fixed value of $t$,
once we know that
\begin{equation}
 2u_t -2u_t \chi : \bD \rightarrow [0,C_t],
\end{equation}
we immediately get that
that for all $z \in \bD$
\begin{equation}
0 \leq \left(\e^{2u_t} - \e^{2u_t \chi} \right)\Big|_{z} \leq \e^{C_t}\left(2u_t- 2u_t \chi\right)\Big|_{z}.
\end{equation}
This is because the slope of any secant
line of the function $\e^x$ valued on $[0,C_t]$
is less than the slope of the tangent line to $\e^x$ at $x=C_t$.
}

\medskip

\noindent$\triangleright\triangleright$ \emph{Proof of \textsc{Subclaim 5B}}: 
First we note that at the outer edge of the annulus, $h_t^\app=h_\infty$, hence
\begin{equation}\label{eq:outer}
\left|F_t^{\mathrm{model}} - F_t^{\app}\right|_{|x|=R_2}
 =\left|F_t^{\mathrm{model}} - \frac{\dot{P}}{4z}\right|_{|x|=R_2} = O(\e^{-\gamma t}).
\end{equation}
The exponential decay follows from \cite[Theorem 8]{DumasNeitzke}.
Note that at the inner edge of the annulus $h_t^\app=h_t^{\mathrm{model}}$, hence
\begin{equation}\label{eq:inner}
\left|F_t^{\mathrm{model}} - F_t^{\app}\right|_{|x|=R_1}
 =0.
\end{equation}
Inside the annulus, we use the maximum principle.
For $|F_t^{\mathrm{model}} - F_t^{\app}|$, we note that (using the usual flat metric on the disk) 
$F_t^{\mathrm{model}}$ and $F_t^\app$ are respectively solutions of the complex variation equation \eqref{eq:ingauge} and \eqref{eq:7app}.  These reduce to the following scalar equations (cf. \eqref{eq:scalarX}) 
\begin{eqnarray}
 \left( \Delta - 16 t^2 |z| \cosh (2u_t  \,\,\,\,)  \right) F^{\mathrm{model}}_t \;+ 8 t^2 \e^{-2u_t \, \, \,} |z|^{-1} \zbar \dot{P} &=&0\\ \nonumber
  \left( \Delta - 16 t^2 |z| \cosh (2u_t \chi )  \right) F^{\app}_t + 8 t^2 \e^{-2u_t \chi } |z|^{-1} \zbar \dot{P} &=&0.
\end{eqnarray}
Thus,
 \begin{eqnarray} \label{eq:PDEmax}
0&=& \left( \Delta - 16 t^2 |z|\cosh(2u_t\chi) \right) (F_t^{\mathrm{model}}-F^{\app}_t)\\ \nonumber
 & &+16 t^2 |z| (\cosh(2u_t) - \cosh(2u_t  \chi)) F_t^{\app}  + 8t^2|z|^{-1} (\e^{-2u_t}-e^{-2u_t \chi}) \zbar \dot{P}.
\end{eqnarray}
We apply the maximum principle to the real and imaginary
parts of $(F_t^{\mathrm{model}}-F^{\app}_t)$ in \eqref{eq:PDEmax}.
For convenience, abbreviate these PDEs respectively $0=(\Delta - k^2) f_{\mathrm{Re}} + g_{\mathrm{Re}}$ and $0=(\Delta - k^2) f_{\mathrm{Im}} + g_{\mathrm{Im}}$.
Suppose $f_{\mathrm{Re}}$ does not achieve an interior maximum or minimum in the annulus.  Then because of the exponential decay of $|f_{\mathrm{Re}} + \I f_{\mathrm{Im}}|$ on the boundary of the annulus in (\ref{eq:outer},~\ref{eq:inner}), we have an exponentially-decaying upper bound on the interior of the annulus. 
Now, suppose $f_{\mathrm{Re}}$ achieves an interior maximum at $x_0$. Then $\Delta f_{\mathrm{Re}} \Big|_{x_0} \leq 0$. Thus, $f_{\mathrm{Re}}(x_0) \leq \frac{g_{\mathrm{Re}}}{k^2}\Big|_{x_0}$. Similarly, at an interior minimum, $f_{\mathrm{Re}}(x_0) \geq \frac{g_{\mathrm{Re}}}{k^2}\Big|_{x_0}$. Thus, if $f_{\mathrm{Re}}$ has an interior maximum or minimum at $x_0$, we see that $|f_{\mathrm{Re}}(x_0)| \leq \frac{g_{\mathrm{Re}}}{k^2}\Big|_{x_0}$.  Doing the same for $f_{\mathrm{Im}}$, we see that for any point $x$ in the annulus $|f|(x) \leq \max_{\bD - \bD_{\chi=1}} \frac{|g|}{k^2}$, i.e.
\begin{equation}
 \max |F_t^{\mathrm{model}} - F_t^{\app}| \leq \max \frac{16 (\cosh(2u_t)-\cosh(2u_t \chi) F_t^{\app} + 8 |z|^{-1} (\e^{-2u_t} - \e^{-2u_t \chi}) \zbar \dot{P}}{16|z| \cosh(2u_t \chi)}
\end{equation}
Note that we can bound this maximum value on the annulus by $O(\e^{-\gamma t})$ because $|\cosh(2u_t)-\cosh(2u_t \chi)|=O(\e^{-\gamma t})$ and $|\e^{-2u_t} - \e^{-2u_t \chi}| = O(\e^{-\gamma t})$.
Thus, since $z, |z|^{-1}, \overline{\dot{P}}, \e^{-2u_t \chi}$ are all uniformly bounded in $t$ on the annulus, we have the desired result:
\begin{equation}
 \int_{\bD - \bD_{\chi=1}} 2 z |z|^{-1} \overline{\dot{P}} \e^{-2u_t \chi}(F_t^{\mathrm{model}} - F_t^{\app}) = O(\e^{-\gamma t}).
\end{equation}
$\triangleleft\triangleleft$ 

\medskip

This completes the proof of \textsc{Claim 5}.
 $\triangleleft$
\end{proof}

\subsection{Main Theorem for \texorpdfstring{$SU(2)$}{SU(2)}} \label{sec:far}

\begin{thm}\label{7thm:mainsl2}
Fix a stable $SL(2,\C)$-Higgs bundle  $(\delbar_E, \varphi) \in \cM'$, and a Higgs bundle variation
$\dot{\psi}=(\dot{\eta}, \dot{\varphi})$. 
Consider the deformation $\dot{\psi}_t=(\dot{\eta}, t \dot{\varphi}) \in T_{(\delbar_E, t \varphi)} \cM$ over the ray $(\delbar_E, t \varphi, h_t)$.
As $t \rightarrow \infty$, the difference between 
 Hitchin's hyperk\"ahler $L^2$-metric $g_{L^2}$ on $\cM$ and the semiflat (hyperk\"ahler) metric $g_{\semif}$ is  exponentially-decaying.  In particular, there is some constant $\gamma>0$, such that 
 \begin{equation} \label{eq:summarysl2}
 \norm{(\dot{\eta}, t \dot{\varphi}, \dot{\nu}_t)}^2_{g_{L^2}}- \norm{(\dot{\eta}, t \dot{\varphi}, \dot{\nu}_\infty)}^2_{g_{\semif}}=  O(\e^{-\gamma t}),
 \end{equation}
 where the $g_{L^2}$ is defined in \eqref{eq:gL2triple} and $g_{\semif}$ is defined in \eqref{eq:gsf}.
\end{thm}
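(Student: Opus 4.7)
The plan is to adopt the decomposition already sketched in \S\ref{sec:compare}:
\begin{equation*}
\norm{(\dot\eta, t\dot\varphi, \dot\nu_t)}^2_{g_{L^2}} - \norm{(\dot\eta, t\dot\varphi, \dot\nu_\infty)}^2_{g_{\semif}}
= \underbrace{\left(\norm{(\dot\eta, t\dot\varphi, \dot\nu_t)}^2_{g_{L^2}} - \norm{(\dot\eta, t\dot\varphi, \dot\nu_t^{\app})}^2_{g_{\app}}\right)}_{\text{(I)}} + \underbrace{\left(\norm{(\dot\eta, t\dot\varphi, \dot\nu_t^{\app})}^2_{g_{\app}} - \norm{(\dot\eta, t\dot\varphi, \dot\nu_\infty)}^2_{g_{\semif}}\right)}_{\text{(II)}},
\end{equation*}
where each of the three norms is given by the clean integral expressions of Proposition \ref{prop:metricdiff}, Corollary \ref{cor:metricdiff} and Theorem \ref{thm:semiflatisL2}, and $\dot\nu_t, \dot\nu_t^{\app}, \dot\nu_\infty$ are the unique Coulomb-gauge metric variations selected by \eqref{eq:ingaugetriple}, \eqref{eq:7app}, \eqref{eq:7}, respectively.

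For term (I), I would first invoke the result of Mazzeo-Swoboda-Weiss-Witt \cite{MSWW14} that $h_t - h_t^{\app}$ and all its relevant derivatives are exponentially small in $t$ on all of $C$. Writing the difference of the two integrands as
\begin{equation*}
\IP{\dot\eta - \delbar_E \dot\nu_t, \dot\eta}_{h_t} - \IP{\dot\eta - \delbar_E \dot\nu_t^{\app}, \dot\eta}_{h_t^{\app}} + t^2\left(\IP{\dot\varphi + [\dot\nu_t, \varphi], \dot\varphi}_{h_t} - \IP{\dot\varphi + [\dot\nu_t^{\app}, \varphi], \dot\varphi}_{h_t^{\app}}\right),
\end{equation*}
one sees that the difference is controlled by $\norm{h_t - h_t^{\app}}$ and $\norm{\dot\nu_t - \dot\nu_t^{\app}}$ in appropriate Sobolev norms. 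The former is exponentially small by \cite{MSWW14}. For the latter, the difference $\dot\nu_t - \dot\nu_t^{\app}$ solves a linear elliptic PDE whose coefficients differ from those of the $h_t^{\app}$-version of \eqref{eq:7app} by exponentially small terms; standard linear elliptic theory (uniform invertibility of the linearization plus the explicit $t^2$-factor from the $[\varphi^{\dagger_{h}}, [\dot\nu, \varphi]]$ term, which gives a uniform lower bound on the smallest eigenvalue) then shows $\norm{\dot\nu_t - \dot\nu_t^{\app}}_{H^2} = O(e^{-\gamma t})$, so term (I) is $O(e^{-\gamma t})$.

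For term (II), I would use that $h_t^{\app} = h_\infty$ off of the union of coordinate disks $\bD_p$ around each branch point $p \in Z$, so the two integrands agree outside $\cup_p \bD_p$ \emph{provided the metric variations agree there.} This is where I would use uniqueness: $\dot\nu_t^{\app}$ and $\dot\nu_\infty$ both solve the same PDE on $C \setminus \cup_p \bD_p$ (since $h_t^{\app}=h_\infty$ there), and I would set up the representatives so that they coincide on $C \setminus \cup_p \bD_p$. Concretely, I would use the representative of \eqref{eq:defshape} on each disk $\bD_p$, which determines $\dot\nu_\infty$ explicitly on $\bD_p$; then define $\dot\nu_t^{\app}$ as the solution of the Dirichlet problem of Corollary \ref{cor:dirichlet} on $\bD_p$ with boundary data $\dot\nu_\infty|_{\partial \bD_p}$, extended by $\dot\nu_\infty$ outside. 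With this matching, term (II) reduces to a sum over $p \in Z$ of integrals over the disks $\bD_p$, each of which is exactly the quantity handled by Theorem \ref{thm:local}, hence $O(e^{-\gamma t})$.

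The main obstacle I expect is the bookkeeping in term (I): showing that the Coulomb-gauge-fixed deformations $\dot\nu_t$ and $\dot\nu_t^{\app}$ are exponentially close in a sufficiently strong norm, uniformly in $t$. The linearized operator in \eqref{eq:ingaugetriple} degenerates in a subtle way because of the $t^2 [\varphi^{\dagger_{h_t}}, [\, \cdot\, , \varphi]]$ term, so one must verify that this weighted elliptic estimate is compatible with the exponentially decaying $t$-dependent source $h_t - h_t^{\app}$, and that the Dirichlet/boundary identification made on the disks $\bD_p$ in term (II) is consistent with the global Coulomb-gauge representative appearing in (I). Handling this uniform invertibility, together with the global gluing of representatives so that (I) and (II) both make sense, will be the technical heart of the argument; once it is done, Theorem \ref{thm:local} together with the MSWW estimate closes the proof.
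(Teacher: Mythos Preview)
Your decomposition into terms (I) and (II) is exactly the paper's, and your treatment of (II) via Theorem \ref{thm:local} matches the paper's argument. The only divergence is in (I): the paper does not re-derive the estimate on $\dot\nu_t - \dot\nu_t^{\app}$ from the MSWW14 bound on $h_t - h_t^{\app}$, but simply invokes \cite[\S10]{MSWW17} as a black box for $g_{L^2}-g_{\app}=O(\e^{-\gamma t})$. So the ``technical heart'' you anticipate---the uniform-in-$t$ invertibility of the linearized Coulomb-gauge operator and the resulting exponential closeness of the metric variations---is already carried out in \cite{MSWW17} and need not be redone here. With that citation in hand, your proof collapses to the two-line argument the paper gives.
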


\begin{proof}
For notational simplicity, we suppress the metric variations and let
$g_{L^2}(\dot{\psi}_t, \dot{\psi}_t):=\norm{(\dot{\eta}, t \dot{\varphi}, \dot{\nu}_t)}^2_{g_{L^2}}$ and 
 $g_{\semif}(\dot{\psi}_t, \dot{\psi}_t):=\norm{(\dot{\eta}, t \dot{\varphi}, \dot{\nu}_\infty)}^2_{g_{\semif}}$.
We  
break the difference $ g_{L^2}(\dot{\psi}_t, \dot{\psi}_t) - g_{\semif}(\dot{\psi}_t, \dot{\psi}_t)$ into two pieces:
 \begin{eqnarray}\label{eq:twopieces}
   \left( g_{L^2}(\dot{\psi}_t, \dot{\psi}_t) - g_{\app}(\dot{\psi}_t, \dot{\psi}_t)  \right) +   \left( g_{\app}(\dot{\psi}_t, \dot{\psi}_t) - g_{\semif}(\dot{\psi}_t, \dot{\psi}_t)  \right).
 \end{eqnarray}
 By  \cite[\S10]{MSWW17}\footnotemark, there is a constant $\gamma$ such that 
\begin{equation}
     g_{L^2}(\dot{\psi}_t, \dot{\psi}_t) - g_{\app}(\dot{\psi}_t, \dot{\psi}_t)  =O(\e^{-\gamma t}).
\end{equation}
For the second piece of \eqref{eq:twopieces}, first note that Mazzeo-Swoboda-Weiss-Witt \cite{MSWW17} prove that $g_{\semif}$ is the $L^2$-metric on $\cM'_\infty$ (see Proposition \ref{prop:MSWW}).
For each $t \in \R^+$, the approximate metric $h_t^\app$ constructed in \cite{MSWW14} differs from $h_\infty$ only on disks around the branch points $Z$; consequently, the only contribution to the difference $ g_{\app}(\dot{\psi}_t, \dot{\psi}_t) - g_{\semif}(\dot{\psi}_t, \dot{\psi}_t)$ is from these disks. By Theorem \ref{thm:local}, 
\begin{equation}
  g_{\app}(\dot{\psi}_t, \dot{\psi}_t) - g_{\semif}(\dot{\psi}_t, \dot{\psi}_t) = O(\e^{-\gamma t}).
\end{equation}
\footnotetext{
To prove this, Mazzeo-Swoboda-Weiss-Witt use their earlier work
describing the family of harmonic metrics.
In
\cite{MSWW14},
they prove that
$h_t(w_1,w_2)= h_t^\app(\e^{-\kappa_t} w_1, \e^{-\kappa_t} w_2)$
for $h_0$-hermitian $\kappa_t$   satisfying
$\|\kappa_t\|_{H^2\left(i \mathfrak{su}(E)\right)} \leq C \e^{-\delta t}$.
(This formulation in terms of the hermitian metrics appears in \cite{FredricksonSLn}, but Mazzeo-Swoboda-Weiss-Witt's $h_0$-unitary formulation is equivalent to it.)
}
\end{proof}

\section{The asymptotic geometry of the \texorpdfstring{$SU(n)$}{SU(n)}-Hitchin moduli space}\label{sec:extension}

The above result extends to (most of) the $SU(n)$-Hitchin moduli space without much added difficulty.
Both hyperk\"ahler metrics $g_{L^2}$ and $g_{\semif}$ exist and are smooth on the regular locus $\cM' \subset \cM$. Though Gaiotto-Moore-Neitzke conjecture an asymptotic expansion of $g_{L^2}$ in terms of $g_{\semif}$  for any Higgs bundle in the regular locus $\cM'$, for technical reasons we restrict our attention to a slightly smaller space $\cM'' \subset \cM'$, the space
of Higgs bundles with simple eigenvalue crossing.
In the $SU(2)$ case, $\cM'=\cM''$.

\begin{defn}\cite{FredricksonSLn}\label{defn:simpleeigenvaluecrossing}
A Higgs bundle $(\delbar_E, \varphi)$ has \emph{simple eigenvalue crossing} if the resultant of the characteristic polynomial of $\varphi$, $\mathrm{char}_\varphi(\lambda)$, and its derivative $\del_\lambda \mathrm{char}_\varphi (\lambda)$ does not vanish on $C$.
\end{defn}

\begin{rem} \label{rem:simpleeigenvaluecrossing} We comment on the reason for restricting to $\cM''$.
The spectral curve $\pi: \Sigma \to C$ is ramified at the branch divisor $Z$, the set of zeros of
the discriminant section
\begin{eqnarray} \label{eq:discriminant}
\Delta_\varphi: C &\rightarrow& K_C^{n^2-n}\\ \nonumber
p &\mapsto& \prod_{1 \leq i<j \leq n} (\lambda_i(p) - \lambda_j(p))^2,
\end{eqnarray}
where $\lambda_1, \cdots, \lambda_n$ are the eigenvalues of $\varphi$.
A Higgs bundle is \emph{discriminant-simple} if all the zeros of $\Delta_\varphi$ are simple. Slightly more generally, 
\begin{figure}[h]
\begin{centering}
\includegraphics[height=1in]{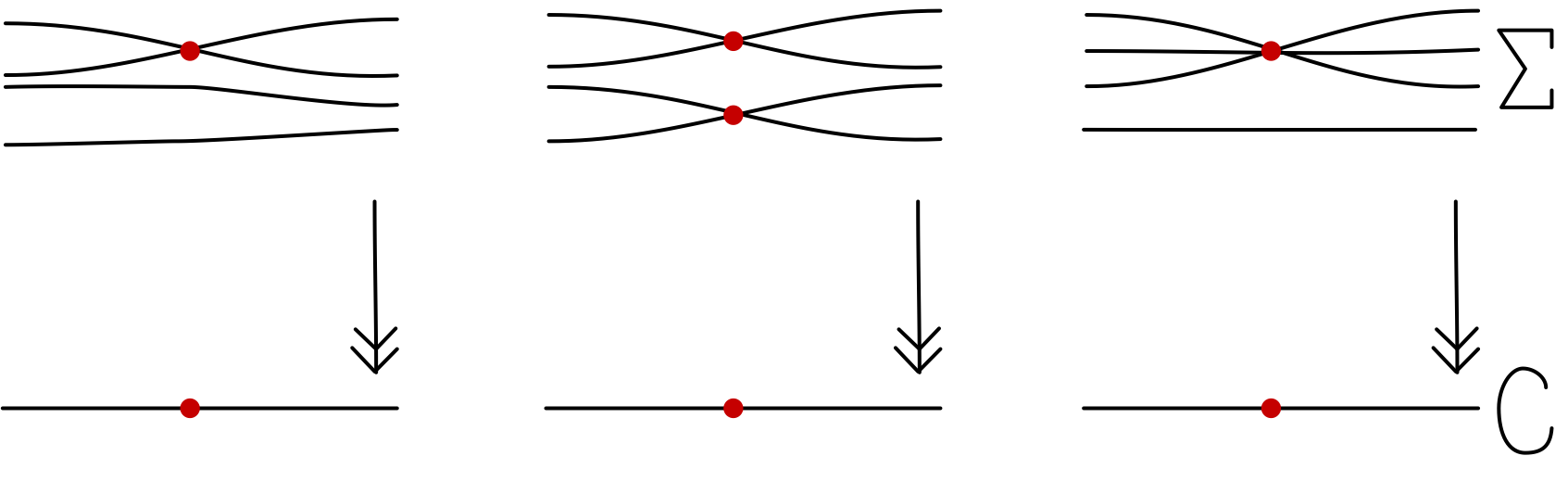}
\caption{\label{fig:sheets} Local spectral covers for 
(\textsc{Left}) a discriminant-simple Higgs bundle;
(\textsc{Middle}) a Higgs bundle with simple eigenvalue crossing;
(\textsc{Right}) a regular Higgs bundle, here featuring the local model $\lambda \mapsto \lambda^3 =z \de z^3$.
}
\end{centering}
\end{figure}
if a Higgs bundle has simple eigenvalue crossing, then near a branch point, pairs of eigenvalues may ramify, as shown in Figure \ref{fig:sheets}.
However, triples, etc. of eigenvalues may not ramify. 
(See the discussion in \cite{FredricksonSLn}.)
The regular locus $\cM'$ of the $SU(n)$-Hitchin moduli space is stratified by the types of ramification\cite[\S2.3]{FredricksonSLn}, and $\cM''$ is the lowest strata of the regular locus.   

The author described the harmonic metrics for Higgs bundles near the ends of the  $SU(n)$-Hitchin moduli in \cite{FredricksonSLn}. Near each branch point in $\cM''$, there is a local holomorphic frame in which \eqref{eq:simplezerosln} holds. In each of the $2 \times 2$ blocks, we can use the local analysis of \S\ref{sec:near} for the $2 \times 2$ local model 
\begin{equation}
\delbar_E=\delbar \quad \varphi = \begin{pmatrix} 0  & z \\ 1  & 0  \end{pmatrix} \de z \qquad
\dot{\eta}=0 \quad \dot{\varphi} = \begin{pmatrix} 0 & \dot{P} \\ 0 &0  \end{pmatrix} \de z.
\end{equation}
We restrict from the regular locus $\cM'$ to the locus $\cM''$ since this $2 \times 2$ local model is sufficient there, and we don't need higher-order local models such as the $3 \times 3$ local model
\begin{equation}
\delbar_E=\delbar \quad \varphi = \begin{pmatrix} 0 & 0 & z \\ 1 &0 & 0 \\ 0 & 1 & 0 \end{pmatrix} \de z \qquad
\dot{\eta}=0 \quad \dot{\varphi} = \begin{pmatrix} 0 & \dot{P}_1 & \dot{P}_2 \\ 0 &0 & \dot{P}_1 \\ 0 & 0 & 0 \end{pmatrix} \de z.
\end{equation}

These higher-order local models require new techniques. To see this in the $3 \times 3$ local model, note that the associated discriminant section $\Delta_\varphi$ has a double zero at $z=0$. The variation $\dot{P}_2$ moves the double zero, while the variation $\dot{P}_1$ decomposes the double zero of $\Delta_{\varphi}$ into two simple zeros.  One can use Dumas-Neitzke's technique of an infinitesimal biholomorphic flow to deal with the variation $\dot{P}_2$. However, a new technique is needed for $\dot{P}_1$, since we infinitesimally pass from a higher strata to a lower strata in $\cM'$.  In this, it is worth noting that the author's construction of approximate harmonics for Higgs bundles near the ends of the $SU(n)$-Hitchin moduli space only works uniformly in each strata\cite[Remark 6.5]{FredricksonSLn}; consequently, a first step towards dealing with variations like $\dot{P}_1$ might be a uniform description of the ends of the $SU(n)$-Hitchin moduli space.
\end{rem}

\begin{thm}\label{7thm:mainsln}
Fix a polystable $SL(n,\C)$-Higgs bundle  $(\delbar_E, \varphi) \in \cM''$, and a Higgs bundle variation
$\dot{\psi}=(\dot{\eta}, \dot{\varphi})$. 
Consider the deformation $\dot{\psi}_t=(\dot{\eta}, t \dot{\varphi}) \in T_{(\delbar_E, t \varphi)} \cM$ over the ray $(\delbar_E, t \varphi, h_t)$.
As $t \rightarrow \infty$, the difference between 
 Hitchin's hyperk\"ahler $L^2$-metric $g_{L^2}$ on $\cM$ and the semiflat (hyperk\"ahler) metric $g_{\semif}$ is  exponentially-decaying.  In particular, there is some constant $\gamma>0$, such that 
 \begin{equation} \label{eq:summarysln}
\norm{(\dot{\eta}, t \dot{\varphi}, \dot{\nu}_t)}^2_{g_{L^2}} -\norm{(\dot{\eta}, t \dot{\varphi}, \dot{\nu}_\infty)}^2_{g_{\semif}} =  O(\e^{-\gamma t}).
 \end{equation}
\end{thm}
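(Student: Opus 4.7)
The plan is to follow the same two-step decomposition as in the $SU(2)$ case (Theorem \ref{7thm:mainsl2}), writing
\begin{equation*}
g_{L^2}(\dot\psi_t,\dot\psi_t) - g_{\semif}(\dot\psi_t,\dot\psi_t) = \bigl(g_{L^2}(\dot\psi_t,\dot\psi_t) - g_{\app}(\dot\psi_t,\dot\psi_t)\bigr) + \bigl(g_{\app}(\dot\psi_t,\dot\psi_t) - g_{\semif}(\dot\psi_t,\dot\psi_t)\bigr),
\end{equation*}
and bounding each piece. Here $h_t^\app$ is the approximate harmonic metric for $SU(n)$ constructed in \cite{FredricksonSLn}, and $g_{\app}$ is the induced (non-hyperk\"ahler) $L^2$-metric on the approximate moduli space, which by Corollary \ref{cor:metricdiff} is given by the same clean expression on triples as the ones Theorem \ref{thm:semiflatisL2} provides for $g_{\semif}$ and Proposition \ref{prop:metricdiff} provides for $g_{L^2}$. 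Consequently both differences reduce to integrals of the pointwise quantity $\delta$ defined in \eqref{eq:delta}, just with the hermitian data $(h,\dot\nu)$ varying.

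The first difference is handled by the exponential-decay estimate for $h_t - h_t^\app$ proved in \cite{FredricksonSLn}, which generalizes the $SU(2)$ estimate of Mazzeo-Swoboda-Weiss-Witt recalled in the footnote of Theorem \ref{7thm:mainsl2}: there is $\kappa_t$ with $\|\kappa_t\|_{H^2(\mathfrak{i}\mathfrak{su}(E))}\le Ce^{-\delta t}$ and $h_t(w_1,w_2) = h_t^\app(\e^{-\kappa_t}w_1,\e^{-\kappa_t}w_2)$. Combined with the uniform boundedness of $\dot\nu_t^\app$ (via Corollary \ref{cor:dirichlet} applied to the $SU(n)$ version of \eqref{eq:7app}), this yields $g_{L^2}-g_{\app}=O(e^{-\gamma t})$ exactly as in the rank-$2$ proof.

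The heart of the proof is the second difference, which localizes to the branch disks. Because $h_t^\app \equiv h_\infty$ off small disks $\bD_p$ around each branch point $p \in Z \subset C$, the uniqueness statement in Corollary \ref{cor:dirichlet} forces $\dot\nu_t^\app$ and $\dot\nu_\infty$ to agree outside those disks, so the integrand of $g_{\app}-g_{\semif}$ is supported on $\bigcup_{p \in Z}\bD_p$. The key structural input is the restriction to $\cM''$: by the simple-eigenvalue-crossing hypothesis and the construction in \cite{FredricksonSLn}, at each branch point there is a local holomorphic frame in which $\varphi$, $h_\infty$, and $h_t^\app$ are all block-diagonal, with a single $2\times 2$ block carrying the ramifying pair of eigenvalues (in the exact local form of \eqref{eq:simplezero}-\eqref{eq:175}) and an $(n-2)\times(n-2)$ abelian block. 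On this frame, the $SU(n)$-version of \eqref{eq:7app} for $\dot\nu_t^\app$ and its analog \eqref{eq:7} for $\dot\nu_\infty$ decouple across the blocks. A block-wise version of the gauge reduction in Claims 1-3 of Theorem \ref{thm:local} places the Higgs-bundle deformation $(\dot\eta,\dot\varphi)$ into a strictly upper-triangular form within the ramifying $2\times 2$ block while the $(n-2)\times(n-2)$ block contributes nothing to $\delta$ since the relevant metric data match there. Applying Theorem \ref{thm:local} verbatim to the $2\times 2$ block at each branch point and summing over the finitely many points of $Z$ gives $g_{\app}-g_{\semif}=O(e^{-\gamma t})$, completing the proof.

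The main obstacle I anticipate is verifying the block decoupling carefully: specifically, checking that (i) the gauge reductions from Claims 1-2 of Theorem \ref{thm:local}, which were phrased for $SL(2,\C)$, can be carried out compatibly with the block structure of the $SU(n)$-approximate solution of \cite{FredricksonSLn}, and (ii) the off-block components of $\dot{\nu}_t^{\app}-\dot{\nu}_\infty$ vanish identically (not merely exponentially). Both of these follow from the uniqueness statement in Corollary \ref{cor:dirichlet} applied block-by-block together with the fact that the residual holomorphic gauge freedom commuting with $\varphi$ preserves the block decomposition, but they need to be stated precisely. Once this decoupling is established, no new analytic input beyond the $SU(2)$ local analysis and the abstract results of Section \ref{sec:Higgsdef} is required.
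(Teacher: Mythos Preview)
Your proposal is correct and follows essentially the same strategy as the paper: the same two-piece decomposition, the same appeal to \cite{FredricksonSLn} for $g_{L^2}-g_{\app}$, and the same localization-plus-block-decoupling argument for $g_{\app}-g_{\semif}$, with the block-by-block gauge reduction and uniqueness via Corollary \ref{cor:dirichlet} exactly as the paper's Proposition \ref{thm:localSLn} does it.

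One minor correction: you describe the local model at a branch point as having a \emph{single} $2\times 2$ ramifying block and an $(n-2)\times(n-2)$ abelian block. In the paper's setting of $\cM''$ (simple eigenvalue crossing, not merely discriminant-simple), the discriminant section may vanish to order $\ell\ge 1$ at $p$, meaning $\ell$ distinct pairs of eigenvalues collide there simultaneously; the local normal form \eqref{eq:simplezerosln} therefore has $\ell$ independent $2\times 2$ blocks (each in its own local coordinate $z_j$) plus an $(n-2\ell)\times(n-2\ell)$ abelian block. This does not affect your argument in any essential way---you just apply the rank-$2$ local estimate of Theorem \ref{thm:local} to each of the $\ell$ blocks and sum---but the gauge-reduction Claim 1 (putting $\dot\varphi$ into the form \eqref{eq:defshapesln}) is correspondingly more intricate: the paper checks holomorphy of the required $\dot\gamma$ on each of the six off-block types (the colored pattern in \eqref{eq:colored}), verifying in particular that the denominators arising from eigenvalue differences never vanish on $\bD$. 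Your anticipated ``main obstacle'' is exactly this verification, and the paper carries it out explicitly.
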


The two key results needed for the proof of Theorem  \ref{7thm:mainsln} are:
\begin{itemize}
 \item Theorem \ref{thm:semiflatisL2}, which states that the semiflat metric is the $L^2$-metric on the moduli space of limiting configurations $\cM''_\infty$ is the semiflat metric.  (Note that Mazzeo-Swoboda-Weiss-Witt's proof in the $SU(2)$ case does not extend to the $SU(n)$ case.)
 \item Proposition \ref{thm:localSLn} which is the $SU(n)$ version of Theorem \ref{thm:local}.
\end{itemize}
We first state and prove Proposition \ref{thm:localSLn}, then prove Theorem \ref{7thm:mainsln}.

\begin{prop} \label{thm:localSLn}
Fix a $SL(n,\C)$-Higgs bundle with simple eigenvalue crossing $(\delbar_E, \varphi) \in \cM''$. Given a branch point $p \in Z \subset C$ at which the discriminant section $\Delta_\varphi$ in \eqref{eq:discriminant} has a zero of order $\ell$, we work in the local holomorphic frame and local coordinates $z_1, \cdots, z_\ell$ on the disk $\mathbb{D}$ centered at $p$ in which 
 \begin{equation} \label{eq:simplezerosln}
  \delbar_E =\delbar \quad \varphi = \bigoplus_{j=1}^\ell\begin{pmatrix}\widehat{\lambda}_{(j)} & z_j \de z_j \\ \de z_j& \widehat{\lambda}_{(j)} \end{pmatrix} \oplus \begin{pmatrix} \lambda_{n-2\ell+1} && \\ & \ddots & \\ & & \lambda_n \end{pmatrix}\quad h_\infty = \bigoplus_{j=1}^\ell\begin{pmatrix} |z_j|^{-1/2} & 0\\ 0& |z_j|^{1/2} \end{pmatrix} \oplus \mathrm{Id}_{n-2\ell}.
 \end{equation}
 (The existence of such a local holomorphic frame is proved in \cite[Proposition 3.5]{FredricksonSLn}.
 Note that in this local frame $h_t$ need not be diagonal; however,  
in this local frame the family of approximate hermitian metric defined in \cite[Definition 4.9]{FredricksonSLn} by
 \begin{equation}\label{eq:htsln}
  h_t^\app = 
  \bigoplus_{j=1}^\ell\begin{pmatrix} |z_j|^{-1/2}\e^{-u_t(|z_j|) \chi(|z_j|)} &0 \\ 0& |z_j|^{1/2} \e^{u_t(|z_j|) \chi(|z_j|)}\end{pmatrix} \oplus \mathbf{1}_{n-2\ell}.
 \end{equation}
 are all diagonal. The
 function $u_t$ is the same function that appeared in the rank $2$ case in \eqref{eq:umodel}.
)

 \medskip
 Let $[(\dot{\eta}, \dot{\varphi})] \in T_{(\delbar_E, \varphi)} \cM'$ be an infinitesimal Higgs bundle deformation.
 Then on the disk $\mathbb{D}$ around the point $p \in C$, there is a unique representative
 in the equivalence class in which
 \begin{eqnarray} \label{eq:defshapesln}
  \dot{\eta} &=&0\\ \nonumber 
  \dot{\varphi} &=& \bigoplus_{j=1}^\ell\begin{pmatrix} \dot{\lambda}_{(j)} & \widehat{\dot{P}}_{(j)} \de z_j\\ 0 & \dot{\lambda}_{(j)} \end{pmatrix}   \oplus \begin{pmatrix} \dot{\lambda}_{2n-\ell + 1} & & \\ & \ddots & \\ & & \dot{\lambda}_{2n} \end{pmatrix} \\ \nonumber 
  \dot{\nu}_\infty &=& \bigoplus_{j=1}^\ell 
  -\frac{\widehat{\dot{P}}_{(j)}}{4z_j} \begin{pmatrix}1& 0 \\ 0 & -1 \end{pmatrix} 
  \oplus \mathbf{0}_{n-2\ell}
 \end{eqnarray}
 
 \medskip
For the ray of variations $(0, t \dot{\varphi}) \in T_{(\delbar_E, t \varphi)} \cB''$, there is a constant $\gamma>0$ such that 
\begin{equation} \label{eq:propnearSLn}
\norm{(0, t\dot{\varphi}, \dot{\nu}_t^\app)}^2_{g_{\app}(\bD)}-\norm{(0, t\dot{\varphi}, \dot{\nu}_\infty)}^2_{g_{\semif}(\bD)} =
O(\e^{-\gamma t}).
\end{equation}
\end{prop}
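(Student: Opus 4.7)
The plan is to reduce the problem to the $SU(2)$ analysis of Theorem \ref{thm:local} by fully exploiting the block-diagonal structure of $(\varphi, h_\infty, h_t^\app)$ exhibited in \eqref{eq:simplezerosln}--\eqref{eq:htsln}. Each $2\times 2$ block looks, up to the addition of the scalar $\widehat{\lambda}_{(j)}\mathrm{Id}$ on the diagonal (which drops out of every commutator), identical to the local model \eqref{eq:simplezero} used in the rank-two proof; the remaining $(n-2\ell)\times (n-2\ell)$ diagonal block records the non-ramified sheets on which $h_t^\app=h_\infty=\mathrm{Id}$, so it will contribute nothing to the difference \eqref{eq:propnearSLn}.

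First, I would establish the canonical representative \eqref{eq:defshapesln}. Using $H^{0,1}(\mathbb{D})=0$, choose $\dot\gamma\in\Omega^0(\mathfrak{sl}(E))$ with $\delbar\dot\gamma=-\dot\eta$ to kill $\dot\eta$. In the resulting gauge $\dot\varphi$ must commute with $\varphi$ modulo $\delbar$-exact terms coming from further holomorphic gauge transformations. The block structure of $\varphi$ means $\mathrm{ad}(\varphi)$ decomposes blockwise; within each $2\times 2$ block I use exactly the computation of Claim 1 in the proof of Theorem \ref{thm:local} to bring $\dot\varphi$ to the strictly upper-triangular form $\widehat{\dot P}_{(j)}\,\de z_j$ modulo a trace $\dot\lambda_{(j)}\mathrm{Id}$, and on the trivial block the distinct non-crossing eigenvalues $\lambda_{n-2\ell+1},\dots,\lambda_n$ let me diagonalize $\dot\varphi$ as $\mathrm{diag}(\dot\lambda_{n-2\ell+1},\dots,\dot\lambda_n)$.

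Next I would verify that both $\dot\nu_\infty$ and $\dot\nu_t^\app$ inherit the same block-diagonal structure. Since $h_\infty$ and $h_t^\app$ are block-diagonal, the operators $\del_E^{h_\infty}\delbar_E+[\varphi^{\dagger_{h_\infty}},[\varphi,\cdot\,]]$ and its approximate counterpart appearing in \eqref{eq:7} and \eqref{eq:7app} preserve the off-diagonal blocks. The sources $\del_E^{h}\dot\eta+[\varphi^{\dagger_h},\dot\varphi]$ have no off-block components in our gauge, so Corollary \ref{cor:dirichlet} (applied blockwise, with boundary data inherited from the matching condition at $\partial\mathbb{D}$ where $\chi=0$) forces $\dot\nu_\infty$ and $\dot\nu_t^\app$ to be block-diagonal. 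The closed-form expression for $\dot\nu_\infty$ in \eqref{eq:defshapesln} then follows by repeating Claim 2 from the proof of Theorem \ref{thm:local} in each $2\times 2$ block (the trivial block contributes $\mathbf{0}_{n-2\ell}$ because on it $\varphi^{\dagger_{h_\infty}}=\overline\lambda_i$ is a scalar matrix).

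The final step is to exhibit the decay \eqref{eq:propnearSLn}. Because all tensors entering the integrand of Corollary \ref{cor:metricdiff} and Theorem \ref{thm:semiflatisL2} are block-diagonal and the integrand is a trace, the $L^2$ inner products split as a sum over blocks with no cross-block terms:
\begin{equation}
\norm{(0,t\dot\varphi,\dot\nu_t^\app)}^2_{g_\app(\mathbb{D})}-\norm{(0,t\dot\varphi,\dot\nu_\infty)}^2_{g_\semif(\mathbb{D})}
=\sum_{j=1}^\ell\Delta_j(t)+\Delta_{\mathrm{triv}}(t).
\end{equation}
On the trivial block one has $h_t^\app=h_\infty$ and $\dot\nu_t^\app=\dot\nu_\infty$ identically on $\mathbb{D}$, so $\Delta_{\mathrm{triv}}(t)=0$. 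Each $\Delta_j(t)$ is, after absorbing the harmless diagonal shift $\widehat\lambda_{(j)}\mathrm{Id}$ (which commutes with everything and hence drops out of $[\dot\nu,\varphi]$) and rescaling the local coordinate $z_j$ so that the leading part reads as in \eqref{eq:simplezero}, precisely the quantity bounded by Theorem \ref{thm:local}, hence $\Delta_j(t)=O(e^{-\gamma_j t})$. Taking $\gamma=\min_j\gamma_j>0$ yields \eqref{eq:propnearSLn}.

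The most delicate point, and the one I expect to require the most care, is verifying that the approximate metric $h_t^\app$ constructed in \cite{FredricksonSLn} genuinely splits in the stated block-diagonal form throughout $\mathbb{D}$ (so that the block decomposition is compatible with the Dirichlet problem for $\dot\nu_t^\app$ on the annulus where $\chi\in(0,1)$). This is where the restriction to $\cM''$ is essential: for Higgs bundles with simple eigenvalue crossing, the local desingularization only mixes pairs of sheets and the block structure of \eqref{eq:htsln} is preserved by construction, whereas for higher-order ramification (handled only on a deeper strata of $\cM'$) one would need a fundamentally different local model, as explained in Remark \ref{rem:simpleeigenvaluecrossing}.
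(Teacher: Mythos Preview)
Your overall strategy---reduce to the $2\times 2$ blocks and invoke Theorem \ref{thm:local} on each---is exactly the paper's approach, and your treatment of the final decay step (splitting the integrand as $\sum_j \Delta_j(t)+\Delta_{\mathrm{triv}}(t)$ and taking $\gamma=\min_j\gamma_j$) matches the paper's argument essentially verbatim.

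However, there is a genuine gap in your normalization of $\dot\varphi$. You say ``$\mathrm{ad}(\varphi)$ decomposes blockwise'' and then immediately pass to the diagonal blocks, but you never explain how the \emph{off-block-diagonal} entries of $\dot\varphi$ are killed. After setting $\dot\eta=0$ the deformation $\dot\varphi$ is an arbitrary holomorphic section of $\mathfrak{sl}(E)\otimes K_C$ on $\mathbb{D}$; it has no reason to respect the block decomposition of $\varphi$. To reach the form \eqref{eq:defshapesln} you must find a \emph{holomorphic} $\dot\gamma$ with $[\varphi,\dot\gamma]$ cancelling each off-diagonal block. This is the bulk of the paper's \textsc{Claim 1}: it diagonalizes $\varphi$ via a (multi-valued) gauge $g_\diamond$, solves for $\widetilde{\dot\gamma}$ entrywise, and then must verify that $\dot\gamma=g_\diamond\widetilde{\dot\gamma}g_\diamond^{-1}$ is actually holomorphic on $\mathbb{D}$---in particular, that the denominators (various products of eigenvalue differences such as $\delta^2-z$ and $\delta(\delta^2-4z)$) do not vanish. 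This is precisely where the simple-eigenvalue-crossing hypothesis enters, and the paper does the case analysis ($2\times2$, $2\times1$, $1\times2$, $1\times1$ off-diagonal blocks) explicitly. Your proposal skips this entirely.

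A smaller point: your argument that $\dot\nu_\infty$ is block-diagonal via Corollary \ref{cor:dirichlet} is shaky, because that corollary assumes a smooth metric on a compact region, while $h_\infty$ is singular at $p$. The paper instead uses the additional linearized equation \eqref{eq:infinitesimaldecoupled} (the infinitesimal version of $[\varphi,\varphi^{\dagger_{h_\infty}}]=0$, which is part of the decoupled Hitchin's equations) to force block-diagonality of $\dot\nu_\infty$, and only invokes Corollary \ref{cor:dirichlet} for $\dot\nu_t^{\app}$, where $h_t^{\app}$ is smooth. Your use of Corollary \ref{cor:dirichlet} for $\dot\nu_t^{\app}$ is fine and matches the paper's \textsc{Claim 2}.
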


\begin{proof}[Proof of Proposition \ref{thm:localSLn}]\hfill

\noindent\textsc{Claim 1:} \emph{There is a representative of the class $[(\dot{\eta}, \dot{\varphi}, \dot{\nu}_\infty)]$ as claimed in \eqref{eq:defshapesln}.}\\ $\triangleright$
Recall from \eqref{eq:infinitesimal} and \eqref{eq:infinitesimalnu} that the infinitesimal gauge transformation $\dot{\gamma}$ acts by
\begin{eqnarray}
 \dot{\eta} &\mapsto& \dot{\eta} + \delbar_E \dot{\gamma}\\ \nonumber 
 \dot{\varphi} &\mapsto& \dot{\varphi} +[\varphi, \dot{\gamma}]\\ \nonumber
 \dot{\nu} & \mapsto & \dot{\nu} + \dot{\gamma}.
\end{eqnarray}
Since $H^{0,1}(\mathbb{D})=0$, we can find $\dot{\gamma}$ such that $\dot{\eta} +\delbar \dot{\gamma}=0$. Consequently, there is a representative with $\dot{\eta}=0$. 

We now seek to produce a holomorphic $\dot{\gamma}$ such that $\dot{\varphi}+[\varphi, \dot{\gamma}]$ is as claimed in \eqref{eq:defshapesln}.
Define
\begin{equation}
 g_{\diamond}= \oplus_{i=1}^\ell\begin{pmatrix} z_i^{1/4} &0 \\ 0& z_i^{-1/4} \end{pmatrix} \begin{pmatrix} \frac{1}{\sqrt{2}} & \frac{1}{\sqrt{2}} \\ - \frac{1}{\sqrt{2}} & \frac{1}{\sqrt{2}} \end{pmatrix} \oplus \mathbf{1}_{n-2\ell},
\end{equation} 
and observe that 
\begin{eqnarray}
\widetilde{\varphi}=
  g^{-1}_{\diamond} \varphi g_{\diamond}&=&   \bigoplus_{j=1}^\ell\begin{pmatrix}\widehat{\lambda}_{(j)}- \sqrt{z_j} \de z_j & 0 \\0 & \widehat{\lambda}_{(j)} + \sqrt{z_j} \de z_j \end{pmatrix} \oplus \begin{pmatrix} \lambda_{n-2\ell+1} && \\ & \ddots & \\ & & \lambda_n \end{pmatrix} 
\end{eqnarray}
is diagonal.
Given a Higgs bundle deformation $\dot{\varphi}$, and a pair $(i,j)$ off the block diagonal,   $\widetilde{\dot{\gamma}}_{ij}$ is determined by the equation  
\begin{equation}
 \left(g^{-1}_{\diamond} \dot{\varphi} g_{\diamond} +[\widetilde{\dot{\varphi}}, \widetilde{\dot{\gamma}}]\right)_{ij}=0.
\end{equation}
Here, we use that $[\widetilde{\dot{\varphi}}, \widetilde{\dot{\gamma}}]_{ij}
=(\widetilde{\dot{\varphi}}_{ii}-\widetilde{\dot{\varphi}}_{jj})\widetilde{\dot{\gamma}}_{ij}$ and the fact that $(\widetilde{\dot{\varphi}}_{ii}-\widetilde{\dot{\varphi}}_{jj})$
is non-zero on $\mathbb{D}$.
In each of the $2\times 2$ blocks
where 
\begin{equation}
\varphi_{2 \times 2} = \begin{pmatrix} \widehat{\lambda}_{(j)} & z \de z \\ \de z & \widehat{\lambda}_{(j)} \end{pmatrix}, 
\qquad \dot{\varphi}_{2 \times 2}= \begin{pmatrix} \dot P + \dot P_1 & \dot P_2 \\ \dot P_3 & \dot P - \dot P_1 \end{pmatrix} \de z,
\end{equation}
define the corresponding $2 \times 2$ block of $\widetilde{\dot\gamma}$ to be
\begin{equation}
\widetilde{\dot{\gamma}}_{2 \times 2} = 
\left(g^{-1}_\diamond \dot{\gamma} g \right)_{2 \times 2} = 
\begin{pmatrix} -\frac{\dot P_1}{2 \sqrt{z}} & -\frac{\dot P_3}{2} + \frac{\dot P_1}{2 \sqrt{z}}  \\
 -\frac{\dot P_3}{2} - \frac{\dot P_1}{2 \sqrt{z}} & \frac{\dot P_1}{2 \sqrt{z}}                            
                                      \end{pmatrix}.
\end{equation}
For $i > 2\ell$, define $\widetilde{\dot{\gamma}}_{ii}=0$.
Having defined $\widetilde{\dot{\gamma}}$, define
\begin{equation}\label{eq:defofgamma}
 \dot{\gamma} = g_{\diamond} \widetilde{\dot{\gamma}} g_{\diamond}^{-1}.
\end{equation}
By construction, formally
\begin{equation}
 \dot{\varphi} + [\varphi, \dot{\gamma}] = \bigoplus_{j=1}^\ell\begin{pmatrix} \dot{\lambda}_{(j)} & \widehat{\dot{P}}_{(j)} \de z_j\\ 0 & \dot{\lambda}_{(j)} \end{pmatrix}   \oplus \begin{pmatrix} \dot{\lambda}_{2n-\ell + 1} & & \\ & \ddots & \\ & & \dot{\lambda}_{2n} \end{pmatrix}.
\end{equation}
It remains to check that the variation $\dot{\gamma}$ defined by \eqref{eq:defofgamma} (which \emph{a priori} may have negative powers and roots of $z_i$ appearing) is truly holomorphic. We show that $\dot{\gamma}$ is holomorphic by computing it explicitly in terms of $\dot{\varphi}$ in each of the following $6$ types of blocks, which can be seen in this $n=6$ example:
\begin{equation} \label{eq:colored}
\varphi:=
\left(\begin{array}{@{}c|c|c |c}
  \begin{matrix}
  \widehat{l}_1 & z \\
  1 &  \widehat{l}_1
  \end{matrix}
  & \color{blue}{\begin{matrix} 0 & 0 \\ 0 & 0 \end{matrix}} & \color{purple}{\begin{matrix} 0  \\ 0 \end{matrix}} & \color{purple}{\begin{matrix} 0  \\ 0 \end{matrix}} \\
\hline

 \color{blue}{\begin{matrix} 0 & 0 \\ 0 & 0 \end{matrix}}&
  \begin{matrix}
  \widehat{l}_{\!(2)\!} & z \\
  1 &  \widehat{l}_{\!(2)\!}
  \end{matrix} &  \color{purple}{\begin{matrix} 0  \\ 0 \end{matrix}}  & \color{purple}{ \begin{matrix} 0  \\ 0 \end{matrix}} \\
  \hline
  
  \color{teal}{\begin{matrix} 0 & 0 \end{matrix}} &  \color{teal}{\begin{matrix} 0 & 0 \end{matrix}} &  l_{5} & \color{BurntOrange}{0} \\
  
  \hline
  
   \color{teal}{\begin{matrix} 0 & 0 \end{matrix}}&   \color{teal}{\begin{matrix} 0 & 0 \end{matrix}} &    \color{BurntOrange}{0}& l_6
\end{array}\right) \de z.
\end{equation}
Besides the $2\times2$ and $1\times1$ blocks on the diagonal,
there are four additional types of blocks: $2\times2$ off-diagonal blocks (shown in blue),
$1 \times 2$ off-diagonal blocks (shown in teal), $2 \times 1$ off-diagonal blocks (shown in the purple), and $1\times 1$ off-diagonal blocks (shown in orange). For notational convenience we first assume that there is a single coordinate $z$, and write
\begin{equation} \label{eq:singlez}
 \varphi =  \left(\bigoplus_{j=1}^\ell\begin{pmatrix}\widehat{l}_{(j)} & z  \\ 1& \widehat{l}_{(j)} \end{pmatrix} \oplus \begin{pmatrix} l_{n-2\ell+1} && \\ & \ddots & \\ & & l_n \end{pmatrix} \right) \de z.
\end{equation}

In the \textbf{$\mathbf{2 \times 2}$ diagonal} blocks,
\begin{equation}
 \underbrace{\begin{pmatrix} \dot P + \dot P_1 & \dot P_2 \\ \dot P_3 & \dot P- \dot P_1 \end{pmatrix}\de z}_{\dot{\varphi}} + \left[\underbrace{\begin{pmatrix}l_j & z  
 \\ 1
 & l_j \end{pmatrix} \de z}_{\varphi}, \underbrace{\begin{pmatrix} -\frac{\dot P_3}{2} & \dot P_1 \\ - 0 & \frac{\dot P_3}{2} \end{pmatrix}}_{\dot{\gamma}} \right] =\begin{pmatrix} \dot P & \dot P_2 + \dot P_3 z \\ 0 & \dot P \end{pmatrix} \de z,
\end{equation}
we observe that $\dot{\gamma}$ is indeed holomorphic. In the \textbf{$\mathbf{1 \times 1}$ diagonal} blocks with $i>2 \ell$, $\dot{\gamma}_{ii}=0$.
In the \textbf{$\mathbf{1 \times 2}$ off-diagonal} and \textbf{$\mathbf{2 \times 1}$ off-diagonal} blocks,
\begin{equation}
 \underbrace{\begin{pmatrix} 0 & 0 & \color{purple}{\dot P_{13}} \\ 0 & 0 & \color{purple}{\dot P_{23}} \\ \color{teal}{\dot P_{31}} & \color{teal}{\dot P_{32}} & 0 \end{pmatrix} \de z}_{\dot{\varphi}} + \left[\underbrace{\begin{pmatrix}l_j & z & 0 \\ 1 & l_j & 0 \\ 0 & 0 &l_k \end{pmatrix} \de z}_{\varphi}, \underbrace{\begin{pmatrix} 0 & 0 & \color{purple}{\frac{-\dot P_{23}z + \dot P_{13}\delta}{z-\delta^2}}\\                                                                                                                                                                                                                                                   0 & 0 & \color{purple}{\frac{-\dot P_{13} + \dot P_{23} \delta}{z-\delta^2}}\\
\color{teal}{\frac{\dot P_{32} - \dot P_{31}\delta}{z-\delta^2}} & \color{teal}{\frac{\dot P_{31} z - \dot P_{32}\delta}{z-\delta^2}} & 0                                                                                                                                                                                                                                                  
 \end{pmatrix}}_{\dot{\gamma}} \right] = \mathbf{0},
\end{equation}
where $\delta = l_j-l_k$.
Here, note that the denominator $\delta^2-z =((l_j +\sqrt{z}) - l_k)((l_j-\sqrt{z}) - l_k)$ never vanishes on $\mathbb{D}$ since the eigenvalues of $\varphi$ are $(l_j+\sqrt{z})\de z, (l_j - \sqrt{z})\de z,$ and $l_k\de z$.
Similarly,  in the \textbf{$\mathbf{2 \times 2}$ off-diagonal} blocks,
\begin{equation}
 \underbrace{\begin{pmatrix} 0 & 0 & \color{blue}{\dot P_{13}} & \color{blue}{\dot P_{14}} \\ 0 & 0 & \color{blue}{\dot P_{23}} & \color{blue}{\dot P_{24}} \\ 0 & 0 & 0 & 0 \\ 0 & 0 & 0 & 0 \end{pmatrix} \de z}_{\dot{\varphi}}
 +
 \left[\underbrace{\begin{pmatrix} l_j & z & 0 & 0 \\ 1 & l_j & 0 & 0 \\ 0 & 0 & l_k & z \\ 0 & 0 & 1 & l_k\end{pmatrix} \de z}_{\varphi}, \dot{\gamma}\right]= \mathbf{0}
\end{equation}
for
\begin{equation} \label{eq:2by2off}
\dot{\gamma}=\begin{pmatrix}
    0 & 0 & \color{blue}{\frac{2\dot P_{24} z + \dot P_{13}(2z - \delta^2) - (\dot P_{14}-\dot P_{23} z)\delta}{-4z \delta + \delta^3}} & \color{blue}{ \frac{2\dot P_{23} z^2 +\dot P_{14}(2z-\delta^2)-(\dot P_{13}-\dot P_{23})z \delta}{-4z \delta + \delta^3}}\\   
    0 & 0 & \color{blue}{\frac{2 \dot P_{14} + \dot P_{23}(2z-\delta^2) + (\dot P_{13}-\dot P_{24}) \delta}{-4z \delta + \delta^3}}& \color{blue}{\frac{2 \dot P_{13}z + \dot P_{24}(2z - \delta^2) + (\dot P_{14}-\dot P_{23}z)\delta}{-4z \delta + \delta^3}}\\
0 & 0 & 0 & 0 \\ 
0 & 0 & 0 & 0\end{pmatrix}
\end{equation}
where $\delta = l_j-l_k$.
Here, note that the denominator never vanishes, because $\delta=(l_j+\sqrt{z})-(l_k + \sqrt{z})=(l_j-\sqrt{z})-(l_k - \sqrt{z})$ never vanishes and \begin{equation}\delta^2-4z=\left((l_j+\sqrt{z})-(l_k+\sqrt{z}) \right)
\left((l_j-\sqrt{z})-(l_k-\sqrt{z}) \right)\end{equation} never vanishes.
Lastly, the simplest case are  the \textbf{$\mathbf{1 \times 1}$ off-diagonal} blocks, where 
\begin{equation}
\underbrace{
 \begin{pmatrix} \dot P_{11} & \color{BurntOrange}{\dot P_{12}} \\ \color{BurntOrange}{\dot P_{21}} & \dot P_{22} \end{pmatrix}\de z}_{\dot{\varphi}} + \left[\underbrace{\begin{pmatrix} l_j & 0 \\ 0 & l_k \end{pmatrix} \de z}_{\varphi}, \underbrace{\begin{pmatrix} 0 & \color{BurntOrange}{ -\frac{\dot P_{12}}{l_j-l_k}} \\ \color{BurntOrange}{ \frac{\dot P_{21}}{l_j-l_k}} & 0 \end{pmatrix}}_{\dot{\gamma}} \right] =  \begin{pmatrix} \dot P_{11} &0 \\ 0 & \dot P_{22} \end{pmatrix}\de z
\end{equation}
Consequently, $\dot{\gamma}$ defined in \eqref{eq:defofgamma} is indeed holomorphic under the assumption of \eqref{eq:singlez} that $z_1=z_2= \cdots = z_\ell$.
If there is not a single coordinate, we only need to verify that $\dot{\gamma}$ defined in \eqref{eq:defofgamma} is still holomorphic in the $2 \times 2$ off-diagonal case. We note that given two holomorphic coordinates $z_j$ and $z_k$ centered at $0$, $z_k=f(z_j)$ where $f$ is an invertible holomorphic function. Consequently, we can write everything in the holomorphic coordinate $z_j$. 
The expression in \eqref{eq:defofgamma} still defines $\dot{\gamma}$ satisfying
\begin{equation}
 \underbrace{\begin{pmatrix} 0 & 0 & \color{blue}{\dot P_{13}} & \color{blue}{\dot P_{14}} \\ 0 & 0 & \color{blue}{\dot P_{23}} & \color{blue}{\dot P_{24}} \\ 0 & 0 & 0 & 0 \\ 0 & 0 & 0 & 0 \end{pmatrix} \de z_j}_{\dot{\varphi}}
 +
 \left[\underbrace{\begin{pmatrix} l_j & z_j & 0 & 0 \\ 1 & l_j & 0 & 0 \\ 0 & 0 & l_k & f(z_j) f'(z_j) \\ 0 & 0 & f'(z_j) & l_k\end{pmatrix} \de z_j}_{\varphi}, \dot{\gamma}\right]= \mathbf{0},
\end{equation}
however the expression for $\dot{\gamma}$ is considerably uglier that \eqref{eq:2by2off}.
The denominator of terms of $\dot{\gamma}$ is
\begin{equation}
-2 f(z_j) f'(z_j)^2 \left((l_j-l_k)^2+z_j\right)+f(z_j)^2 f'(z_j)^4+\left(z_j-(l_j-l_k)^2\right)^2,
\end{equation}
which does not vanish on $\bD$ since it is the product of the four differences of the eigenvalues of $\varphi$:
\begin{equation}
 \left((l_j \pm \sqrt{z_j}) - (l_k \pm \sqrt{f(z_j)}f'(z_j)) \right) \de z_j.
\end{equation}
Thus, there is representative with $(\dot{\eta}, \dot{\varphi})$ as claimed in \eqref{eq:defshapesln}.

\medskip

To see that we can take representative $\dot{\nu}_\infty$ as in \eqref{eq:defshapesln}, note
that from the infinitesimal version of $[\varphi, \varphi^{\dagger_{h_\infty}}]=0$ given in \eqref{eq:infinitesimaldecoupled}, $\dot{\nu}_\infty$ is block diagonal.  By similar arguments to the $n=2$ case, 
 we can use some of the remaining gauge freedom in each $2 \times 2$ block to arrange that
 \begin{equation}
  \dot{\nu}_\infty^{\Delta} = 
\bigoplus_{j=1}^\ell \left(
  -\frac{\widehat{\dot{P}}_{(j)}}{4z_j} \begin{pmatrix}1& 0 \\ 0 & -1 \end{pmatrix}  + \widehat{\dot{c}}_{(j)} \begin{pmatrix} 1 & 0 \\ 0 & 1 \end{pmatrix}\right) 
  \oplus \begin{pmatrix} \dot{c}_{2\ell + 1} & & \\ & \ddots & \\ & & \dot{c}_{n}  \end{pmatrix}.
 \end{equation}
for $\widehat{\dot{c}}_{(j)}$ and $\dot{c}_{j}$ holomorphic. Finally, take the infinitesimal complex gauge transformation
\begin{equation} \label{eq:diag}
 \dot{\gamma} =  \bigoplus_{j=1}^\ell  \begin{pmatrix} -\widehat{\dot{c}}_{(j)} & 0 \\ 0 & -\widehat{\dot{c}}_{(j)} \end{pmatrix} \oplus  \begin{pmatrix} -\dot{c}_{2\ell + 1} & & \\ & \ddots & \\ & & -\dot{c}_{n}  \end{pmatrix} ,
\end{equation}
and observe that it is holomorphic and satisfies $[\varphi, \dot{\gamma}]=0$, hence this infinitesimal gauge action doesn't change the expression for $\dot{\eta}$ or $\dot{\varphi}$. However, it changes the infinitesimal deformation of the hermitian metric 
\begin{equation}
\dot{\nu}_\infty \mapsto \dot{\nu}_\infty + \dot{\gamma}=\bigoplus_{j=1}^\ell \left(
  -\frac{\widehat{\dot{P}}_{(j)}}{4z_j} \begin{pmatrix}1& 0 \\ 0 & -1 \end{pmatrix}  \right) 
  \oplus \mathbf{0}_{n-2\ell};
\end{equation}
consequently, there is a representative with $(\dot{\eta}, \dot{\varphi}, \dot{\nu}_\infty)$ as claimed.

\medskip

\noindent\textsc{Claim 2:} \emph{The variation $\dot{\nu}_t^{\app}$ is
\begin{equation}
  \dot{\nu}_t^{\app} = 
  -\frac{1}{2} F_t^{(j)} \begin{pmatrix}1& 0 \\ 0 & -1 \end{pmatrix} 
  \oplus \mathbf{0}_{n-2\ell}.
\end{equation}
}\\ $\triangleright$
The variation $\dot{\nu}_t^\app$ solves \eqref{eq:7app} and agrees with $\dot \nu_\infty$ wherever $h_t^\app$ is equal to $h_\infty$. From Corollary \ref{cor:dirichlet}, this  determines $\dot{\nu}_t^{\app}$ uniquely. In the equation \eqref{eq:7app}, 
the blocks shown in \eqref{eq:colored} all decouple.  Away from the $\ell$ $2\times2$ blocks on the  diagonal, $(\dot{\nu}_t^\app)_{ij}=0$ is a solution, hence (by uniqueness) it is \emph{the} solution.
It follows that $\dot{\nu}_t^{\app}$ is block diagonal with $\mathbf{0}_{n-2\ell}$ in
the bottom right corner. 
It remains to determine $\dot{\nu}_t^{\app}$ in each of the $\ell$ $2\times 2$ blocks.
In each $2\times2$ block, we use the same $n=2$ argument in Theorem \ref{thm:local}, and we conclude that
\begin{equation}
  \dot{\nu}_t^{\app, \Delta} = 
\bigoplus_{j=1}^\ell
 - \frac{1}{2} F_t^{(j)} \begin{pmatrix}1& 0 \\ 0 & -1 \end{pmatrix}  \oplus  \mathbf{0}_{n-2\ell},
\end{equation}
where $F_t^{(j)}$ is a complex-valued function. 
 $\triangleleft$

\bigskip

We turn now to proving the exponential decay in \eqref{eq:propnearSLn}.
In order to prove the exponential decay of 
\begin{eqnarray*}
\norm{(0, t\dot{\varphi}, \dot{\nu}_t^\app)}^2_{g_{\app}(\bD)}-\norm{(0, t\dot{\varphi}, \dot{\nu}_\infty)}^2_{g_{\semif}(\bD)}&=& 2t^2\int_C  
 \IP{\dot{\varphi} +[\dot{\nu}_t^\app, \varphi], \dot{\varphi}}_{h_t^\app}-
  \IP{\dot{\varphi} +[\dot{\nu}_\infty, \varphi], \dot{\varphi}}_{h_\infty},
 \end{eqnarray*}
 we observe that this integral is a sum of the contributions from each of the $\ell$ $2\times 2$ blocks.  In Theorem \ref{thm:local}, we proved
 that the contribution from the $k^{th}$ $2\times 2$ block was exponentially-decaying like $O(\e^{-\gamma_k t})$. Taking $\gamma=\min(\gamma_1, \cdots, \gamma_\ell)$, it follows that
\begin{equation} 
\norm{(0, t\dot{\varphi}, \dot{\nu}_t^\app)}^2_{g_{\app}(\bD)}-\norm{(0, t\dot{\varphi}, \dot{\nu}_\infty)}^2_{g_{\semif}(\bD)} =
O(\e^{-\gamma t}).
\end{equation}
\end{proof}

\begin{proof}[Proof of Theorem \ref{7thm:mainsln}]
The structure of the proof is the same as the $n=2$ case. 
We  
break the difference $ g_{L^2}(\dot{\psi}_t, \dot{\psi}_t) - g_{\semif}(\dot{\psi}_t, \dot{\psi}_t)$ into two pieces:
 \begin{eqnarray}\label{eq:twopiecessln}
   \left( g_{L^2}(\dot{\psi}_t, \dot{\psi}_t) - g_{\app}(\dot{\psi}_t, \dot{\psi}_t)  \right) +   \left( g_{\app}(\dot{\psi}_t, \dot{\psi}_t) - g_{\semif}(\dot{\psi}_t, \dot{\psi}_t)  \right).
 \end{eqnarray}
 For the first piece of \eqref{eq:twopiecessln},
 the author proves in \cite{FredricksonSLn} that
 the harmonic metric $h_t$ is close to $h_t^\app$; in particular,
 \begin{equation}h_t(w_1,w_2)= h_t^\app(\e^{-\kappa_t} w_1, \e^{-\kappa_t} w_2)
 \end{equation}
for $h_0$-hermitian $\kappa_t$   satisfying
$\|\kappa_t\|_{H^2\left(i \mathfrak{su}(E)\right)} \leq C \e^{-\delta t}$.
 By a similar argument to the $n=2$ case, it follows that there is a constant $\gamma$ such that 
\begin{equation}
     g_{L^2}(\dot{\psi}_t, \dot{\psi}_t) - g_{\app}(\dot{\psi}_t, \dot{\psi}_t)  =O(\e^{-\gamma t}).
\end{equation}
For the second piece of \eqref{eq:twopiecessln}, 
first note that $g_{\semif}$ is the $L^2$-metric on $\cM'_\infty$ by Theorem \ref{thm:semiflatisL2}.
 The family of approximate metrics $h_t^\app$ constructed in \cite{FredricksonSLn} differ from $h_\infty$ only on disks around the branch points $Z$; consequently, the only contribution to the difference $ g_{\app}(\dot{\psi}_t, \dot{\psi}_t) - g_{\semif}(\dot{\psi}_t, \dot{\psi}_t)$ is from these disks. By Proposition \ref{thm:localSLn}, 
\begin{equation}
  g_{\app}(\dot{\psi}_t, \dot{\psi}_t) - g_{\semif}(\dot{\psi}_t, \dot{\psi}_t) = O(\e^{-\gamma t}).
\end{equation}
\end{proof}

\bibliography{ends}{}
\bibliographystyle{fredrickson}
\end{document}